\renewcommand{\i}{\mathrm{i}}
\newcommand{\br}[3]{{$#1$}$\lower4pt\hbox{$\tp\atop\raise4pt \hbox{$\scriptscriptstyle{#2}$}$} ${$#3$}}
\newcommand{\tw}[3]{{$#1$}${\,\scriptscriptstyle {#2}}\atop\raise9pt\hbox{$\scriptstyle\tp$} ${$#3$}}
\newcommand{\ttps}[2]{{#1}\raise5pt\hbox{$\lower12pt\hbox{$\scriptstyle\tp$}\atop \lower0pt\hbox{$\tilde\;$}$}\raise4.5pt\hbox{${\scriptstyle{#2}}$}}
\newcommand{\st}[1]{\mbox{${\,\scriptscriptstyle {#1}}\atop\raise5.5pt\hbox{$*$}$}}
\newcommand{\rd}[1]{\mbox{${\,\scriptscriptstyle {#1}}\atop\raise5.5pt\hbox{$\bullet$}$}}
\newcommand{\rt}[1]{\otimes_\chi}
\newcommand{\lt}[1]{\mbox{${\,\scriptscriptstyle {#1}}\atop\raise5.5pt\hbox{$\ltimes$}$}}
\newcommand{\btr}{\raise1.2pt\hbox{$\scriptstyle\blacktriangleright$}\hspace{2pt}}
\newcommand{\btl}{\raise1.2pt\hbox{$\scriptstyle\blacktriangleleft$}\hspace{2pt}}
\newcommand{\lcr}{\raise1.0pt \hbox{${\scriptstyle\rightharpoonup}$}}
\newcommand{\rcr}{\raise1.0pt \hbox{${\scriptstyle\leftharpoonup}$}}
\newcommand{\ttp}{{\lower12pt\hbox{$\tp$}\atop \hbox{$\tilde\;$}}}
\newcommand{\id}{\mathrm{id}}
\renewcommand{\i}{\mathrm{i}}
\newcommand{\Dc}{\mathcal{D}}
\newcommand{\Bc}{\mathcal{B}}
\newcommand{\Ac}{\mathcal{A}}
\newcommand{\Ru}{\mathcal{R}}
\newcommand{\Uc}{\mathcal{U}}
\newcommand{\Kc}{\mathcal{K}}
\newcommand{\C}{\mathbb{C}}
\newcommand{\Z}{\mathbb{Z}}
\newcommand{\N}{\mathbb{N}}
\newcommand{\tp}{\otimes}
\newcommand{\zt}{\zeta}
\newcommand{\U}{U}
\newcommand{\ve}{\varepsilon}
\newcommand{\gm}{\gamma}
\newcommand{\dt}{\delta}
\newcommand{\op}{\oplus}
\newcommand{\la}{\lambda}
\newcommand{\End}{\mathrm{End}}
\newcommand{\Span}{\mathrm{Span}}
\newcommand{\Aut}{\mathrm{Aut}}
\newcommand{\rk}{\mathrm{rk}}
\newcommand{\Rm}{\mathrm{R}}
\newcommand{\La}{\Lambda}
\newcommand{\g}{\mathfrak{g}}
\renewcommand{\b}{\mathfrak{b}}
\renewcommand{\k}{\mathfrak{k}}
\newcommand{\h}{\mathfrak{h}}
\newcommand{\s}{\mathfrak{s}}
\renewcommand{\o}{\mathfrak{o}}
\newcommand{\n}{\mathbf{n}}
\newcommand{\m}{\mathbf{m}}
\newcommand{\eps}{\epsilon}
\newcommand{\nn}{\nonumber}
\newcommand{\p}{\mathfrak{p}}
\renewcommand{\l}{\mathfrak{l}}
\renewcommand{\c}{\mathfrak{c}}
\newcommand{\si}{\sigma}
\newcommand{\al}{\alpha}
\renewcommand{\t}{\mathfrak{t}}
\newcommand{\bt}{\beta}
\newcommand{\be}{\begin{eqnarray}}
\newcommand{\ee}{\end{eqnarray}}
\newtheorem{thm}{Theorem}[section]
\newtheorem{propn}[thm]{Proposition}
\newtheorem{lemma}[thm]{Lemma}
\newtheorem{corollary}[thm]{Corollary}
\newtheorem{conjecture}{Conjecture}
\newtheorem{remark}[thm]{Remark}
\newtheorem{definition}[thm]{Definition}
\newcommand{\parag}{\advance\prg by1 {\noindent\bf\thesection.\the\prg\hspace{6pt}}}
\begin{document}

\title{Quantum super-spherical pairs}
\author{D. Algethami${}^{\dag,\ddag}$, A. Mudrov${}^{\dag,\sharp}$, V. Stukopin${}^\sharp$
\vspace{10pt}\\
\small ${\dag}$ University of Leicester, \\
\small University Road,
LE1 7RH Leicester, UK,
\vspace{10pt}\\
\small ${\ddag}$ Department of Mathematics, College of Science,\\
\small
 University of Bisha, P.O. Box 551, Bisha 61922, Saudi Arabia,
\vspace{10pt}\\
\small
${\sharp}$ Moscow Institute of Physics and Technology,\\
\small
9 Institutskiy per., Dolgoprudny, Moscow Region,
141701, Russia,
\vspace{10pt}\\
\small
 e-mail: daaa3@leicester.ac.uk,  mudrov.ai@mipt.ru, stukopin.va@mipt.ru
}

\maketitle

\begin{abstract}
We introduce quantum super-spherical pairs  as coideal subalgebras in general linear and orthosymplectic  quantum supergroups. These subalgebras play a role of isotropy subgroups for
matrices solving the $\mathbb{Z}_2$-graded reflection equation.
They generalize quantum (pseudo)-symmetric pairs of Letzter-Kolb-Regelskis-Vlaar.

\end{abstract}

{\small \underline{2010 AMS Subject Classification}: 17B37,17A70}.
\\

{\small \underline{Key words}:  Quantum super-spherical pairs, Quantum symmetric pairs, Quantum supergroups, Graded reflection equation}

\newpage
\tableofcontents
%\newpage

\section{Introduction}

%\pagenumbering{arabic}

This paper is devoted to a $\Z_2$-graded generalization  of quantum symmetric pairs pioneered by Letzter \cite{Let} and  developed to a deep theory of
quantum symmetric spaces, \cite{K,BK,RV,AV}.
The classical supergeometry is an established field of mathematics \cite{Ma}, \cite{VMP} motivated by profound  applications of supersymmetry in quantum physics \cite{Dir,FP}.  That refers to the concept of symmetric  and, more generally, spherical spaces \cite{Sh,Sh1}. Quantum symmetric pairs stem from  the
Reflection Equation (RE), which plays for them a similar fundamental role as the Yang-Baxter Equation for quantum groups. Quantum supergroups have been in research focus from the very
birth of quantum groups and are now well understood \cite{Y1}. It seems natural to unify these precursors in a theory of quantum super-symmetric pairs.
Such an attempt was made in \cite{Shen}
for a special case of general linear supergroups following ideas of  \cite{KY}.
The current paper is a step further to a general theory of quantum super-spherical pairs. We have also to mention \cite{ShenWang}
addressing a similar topic that appeared shortly after our arXiv preprint.

We are not aiming at a comprehensive exposition of this broad area but focus on the special case of classical (basic) matrix  Lie superalgebras
and confine  ourselves with  a special symmetric polarization of the root system. Unlike \cite{Shen,ShenWang} we do not appeal to
the Radford-Majid bosonization of quantum supergroups \cite{Y2} but stay within the category of Hopf superalgebras.
Our approach is close to \cite{RV} and based on  classification of  graded (generalized)   Satake diagrams, for a fixed Borel subalgebra in $\g$.
However our logic is rather inverse  comparing to \cite{RV}: we start with transparent quasiclassical conditions on a subalgebra $\k$ that allows
for Letzter's quantization  and arrive at an involutive automorphism of the Cartan matrix,
which is a starting point  of  \cite{RV}. Such an automorphism is subordinate to a subset of simple roots of $\g$. Together they define a generalized Satake diagram:
a combinatorial datum underlying $\k$.
We then solve the RE for a special class of $\k$ and relate the K-matrices to the corresponding coideal subalgebras as their quantum isotropy subgroups.

Recall that a subalgebra $\k$ in a simple Lie algebra $\g$ is called spherical if there is a Borel subalgebra $\b\subset \g$ such that
$\k+\b=\g$. The pair $(\g,\k)$ is the localization of a classical homogeneous spherical manifold. Such manifolds generalize symmetric spaces and feature similar nice representation theoretical properties \cite{VK}.
Solutions to non-graded RE   for standard $U_q(\g)$ deliver  classical points on spherical manifolds, where
the Poisson bracket vanishes \cite{AlM}. Their isotropy Lie algebras enter spherical pairs $(\g,\k)$ with the total Lie algebra $\g$.

We adopt a similar definition for super-spherical pairs assuming that both $\g$ and  $\k$ are graded. As different Borel subalgebras in $\g$
are generally not conjugated and even not isomorphic (it is an easy exercise already for $\o\s\p(2|2)$), this definition depends on the choice of $\b$. We study spherical pairs that are quantizable along the similar lines as their non-graded (pseudo) symmetric analogs \cite{Let,K,RV}.

\subsection{Current work}
 We define a classical pseudo-symmetric pair $(\g,\k)$ via an involutive automorphism $\tau$ of the Dynkin diagram  $D_\g$  subordinate to a certain subdiagram $D_\l\subset D_\g$, cf. Definition \ref{triple}.
Here $\l$ is an analog of the semisimple Lie subalgebra in $\k$ whose roots are fixed by the symmetry in the ordinary symmetric pair $(\g,\k)$.
The key novelty here is a substitute for the longest Weyl group element $w_\l$ of $\l$, which is present in the non-graded case and which
plays a crucial role in the non-graded theory. For an admissible subset $\Pi_\l\subset \Pi$ of
simple roots specified by Definition \ref{admissible}, we introduce an operator $w_\l$ that features basic properties of the longest Weyl group element.
In particular, it is an even involution that preserves the root systems  of $\l$ and $\g$ with their weight lattices and flips the highest and lowest weights of
basic $\l$-submodules in $\g\ominus \l$ generating $\k$ over $\l$ and a certain subset $\t$ of the Cartan subalgebra $\h\subset \b$.
Such an operator can be defined if the total grading of $\g$ induces a special "symmetric" grading on $\l$.
The relation of $\tau$ with $\l$ consists in the requirement $\tau|_{\Pi_\l}=-w_\l$.

We prove that subalgebras $\k\subset \g$ constructed this way are spherical (Proposition \ref{ps-sym=sup-sph}) and quantizable as coideal subalgebras
in the  quantum supergroup $U_q(\g)$ (Theorem \ref{thm}). They can be conveniently parameterized by decorated Dynkin diagrams associated
with the triples $(\g,\l, \tau)$,  which amount to graded (generalized) Satake diagrams upon filtering through a set of selection rules
in Section \ref{SecDDD-SR}.

For each decorated Dynkin diagram, $\k$ is generated over $\l+\t$ by $x_\al=e_\al+c_\al f_{\tilde \al}+\grave{c}_\al u_\al$, 
$\al \in \bar \Pi_\l=\Pi\backslash \Pi_\l$, where
the positive root $\tilde \al$ is determined by $\tau$,
see (\ref{gen-spher-superpairs}). Here $u_\al\in \h$ is an element centralizing $\l$ for even  $\al\in \bar \Pi_\l$  orthogonal to $\Pi_\l$,
and scalars  $c_\al\in \C^\times$, $\grave{c}_\al\in \C$  are called mixture parameters.
Thus every  diagram gives rise to a whole  family of spherical subalgebras $\k\subset \g$.

There are two problems arising in connection with such a combinatorial description of $\k$: a) it needs to be checked if the
subalgebra $\k$ is proper, that is,
$\k\varsubsetneq \g$, and b) one has to check if $\k$ is  related with a non-trivial K-matrix.
The case  $\k=\g$ should be regarded as trivial and  not interesting. Note that, while the presence of $K$-matrix is of prime interest by itself, it is a sufficient condition for a) meaning that $\k$ has a bigger supply of matrix invariants than $\g$.
We give a classification of graded Satake diagrams within the fixed grading by discarding decorated diagrams leading to trivial  pairs with $\k=\g$ at any choice
of mixture parameters.
Our selection rules reduce  to forbidding
subdiagrams (\ref{RVSR}-\ref{D-TAIL}) to appear in a decorated Dynkin diagram. They turn out to be  more intricate than the non-graded selection rules, which involve  only
one subdiagram (\ref{RVSR}), see \cite{RV} for details.

Graded generalized Satake diagrams are listed in Section \ref{Sec-Gr-Satake}. Their initial classification may be conducted by the shape of the diagram. By this
we mean the non-graded decorated diagram obtained by throwing  the grading away. Diagrams (\ref{GL-I})--(\ref{C-type-diag}) whose shape is an admissible non-graded generalized
Satake diagrams  are said to be of type I. The remaining diagrams of type II are "essentially graded" and comprise (\ref{ANOM-GL}), (\ref{ANOM-OSP})
and (\ref{C-type-diag0}). All they have odd roots in $\bar \Pi_\l$.

We present K-matrices for type I in Theorem \ref{s-graded} thus proving that the corresponding pairs $(\g,\k)$ are proper.
With regard to type II we conjecture that they produce proper $\k$ at least for some values of mixture parameters defining
$\k$. For the special case of (\ref{C-type-diag0}) that holds due to the K-matrix  presented in Theorem \ref{s-graded}.
We expect that diagrams  (\ref{ANOM-GL}) and (\ref{ANOM-OSP}) lead to solutions of a twisted version (\ref{REtw}) of RE relative to the matrix super-transposition.
Their K-matrices are believed to be (\ref{GL-left}) and (\ref{GL-right}).
Those diagrams from (\ref{ANOM-OSP}) associated with non-trivial flip of the Satake diagram are related with RE twisted by the outer automorphism (\ref{REtw1}).
Their conjectured K-matrices are explicated in (\ref{K-white-tailed-twisted}).
With regard to the remaining part of diagrams  (\ref{ANOM-OSP}), we guess their K-matrices (\ref{K-black-tailed}), (\ref{K-white-tailed}), and (\ref{K-half-tailed})
by studying the simplest examples.
This makes us believe that all graded Satake diagrams give rise to non-trivial spherical pairs associated with an appropriate version of RE.
This discussion will be a matter of a forthcoming study.

Solutions to the RE for the spherical pairs of type I are given in Section \ref{SecKmat}, Theorem \ref{s-graded}.
They turn out to have a similar shape as in the non-graded case, with a certain restriction on the location of the odd root
for the ortho-symplectic $\g$, see Corollary \ref{Levi-Gr-Boxes}.

For each Satake diagram, we quantize the universal enveloping algebra  $U(\k)$ of the relative spherical subalgebra $\k\subset \g$
as a coideal subalgebra $U_q(\k)\subset U_q(\g)$, cf. Theorem \ref{thm}. Therein we relate the K-matrices found
in Section \ref{SecKmat} to corresponding $U_q(\k)$.

\section{Preliminaries}

This section contains  a general  description of quantum supergroups deforming the universal enveloping algebras of general linear
 and orthosymplectic Lie superalgebras. We explicate their natural representations on  the graded vector space $ \C^{N|2\m}$ along
with the graded R-matrices in a symmetric grading with minimal number of odd simple roots.

\subsection{Quantum Supergroup $U_q (\g)$}
For a textbook on quantum groups, the reader is referred to \cite{ChP}. In our exposition of quantum supergroups, we follow \cite{KT1}.

An algebra $\Ac$ is called superalgebra if it is $\Z_2$-graded: that is, $\Ac=\Ac_0\oplus \Ac_1$, and $\Ac_i\Ac_j\subset \Ac_{i+j\!\!\mod 2}$.
Elements of $\Ac_0$ are called even and elements of $\Ac_1$ are called odd.

Given two  graded associative algebras $\Ac$ and $\Bc$  their tensor product $\Ac\tp \Bc$ is a graded algebra too.
The multiplication on homogeneous elements is determined by the rule
$$
(a_1\tp b_1)(a_2\tp b_2)=(-1)^{|a_2||b_1|}a_1 a_2\tp b_1b_2.
$$
where $|a|\in \{0,1\}$ stands for  the degree of $a$.

In this section, we  recall a general definition of quantum supergroup $U_q (\g)$ associated with a graded Lie superalgebra $\g$
possessing  a set of Chevalley-like generators. It is a quasi-triangular Hopf superalgebra
with comultiplication ranging in the graded commutative tensor square of $U_q(\g)$.
The antipode in $U_q(\g)$ is a graded anti-automorphism: $\gm(ab)=(-1)^{|a||b|}\gm(b)\gm(a)$ for all homogeneous $a,b\in U_q(\g)$.

Let $\g$ be a finite-dimensional complex  Lie superalgebra associated with a polarized root system $\Rm=\Rm^-\cup \Rm^+$ of rank $n$  with a basis $\Pi=\{\alpha_i: i \in[1,n]=I\}\subset \Rm^+$ of the simple roots and even Cartan subalgebra $\h\subset \g$. Let  $A=(a_{ij})_{1\leq i,j\leq n}$, denote the symmetrizable Cartan matrix and $(-,-)$ the corresponding
non-degenerate symmetric bilinear form on the dual vector space $\h^*$.

Fix  $q\in \C^\times$ to be not a root of unity. Define quantum integers   by setting
$$[n]_q=q^{n-1}+q^{n-3}+\dots+q^{-n+3}+q^{-n+1}=\frac{q^n-q^{-n}}{\omega}$$
for $n\in \mathbb{Z}$,
with the notation, $\omega=q-\bar q$, $\bar q=q^{-1}$, used  throughout the text.
For each simple root $\al\in \Pi$ define
$$
 q_{\alpha}=\begin{cases}
 q, & \mbox{if } (\alpha,\alpha)=0, \\
 q^\frac{(\alpha,\alpha)}{2}, & \mbox{if } (\alpha,\alpha)\neq0.
 \end{cases}
$$
Denote by $\varkappa\subset I$
the subset indexing even simple roots of $\g$.
\begin{definition}\cite{KT1}
  The quantum supergroup $U_q (\g)$ is a complex unital associative superalgebra   generated by
 $ e_{\pm\alpha_i},$ and $q^{\pm h_{\alpha_i}}$ with grading
$$|e_{\pm{\alpha_i}}|=\begin{cases}
 0, & \mbox{if } i\in \varkappa\subset I  \\
 1, & \mbox{if } i\notin \varkappa.
 \end{cases},
 \quad
|q^{\pm h_{\alpha_i}}|=0,\ \forall\ i\in I,
$$ such that the following relations are satisfied:
\begin{enumerate}
  \item [(i)]	$q^{h_{\alpha_i} } q^{-h_{\alpha_i} }=q^{-h_{\alpha_i} } q^{h_{\alpha_i} }=1,$ $ q^{h_{\alpha_j} } q^{h_{\alpha_i} }=q^{h_{\alpha_i}} q^{h_{\alpha_j} }$,
\item [(ii)]	$q^{h_{\alpha_i} }  e_{\pm{\alpha_j} }  q^{-h_{\alpha_i} }=q^{\pm(\alpha_i,\alpha_j ) } e_{\pm\alpha_j }$,

\item [(iv)] $[e_{\alpha_i },e_{-\alpha_j}]=e_{\alpha_i }e_{-\alpha_j}-(-1)^{|e_{-\alpha_j }||e_{\alpha_i }|}e_{-\alpha_j }e_{\alpha_i }=\delta_{ij}[h_{\al_i}]_{q_{\al_i}}$,\item [(v)] $(ad_{q'} e_{\pm{\alpha_i}})^{v_{ij}} e_{\pm{\alpha_j}}=0,\quad i\neq j,\ q'=q, \bar q,$ where
 $$(ad_{q'} e_{\al_i}) x=e_{\al_i}x-(-1)^{|e_{\al_i}||x|}(q')^{(\alpha_i,wt(x))}x e_{\al_i},\quad x\in U_q(\g),$$
 $$ v_{ij}=\begin{cases}
                 1, & \mbox{if } (\alpha_i,\alpha_i)=(\alpha_i,\alpha_j)=0, \\
                 2, & \mbox{if } (\alpha_i,\alpha_i)=0,\ (\alpha_i,\alpha_j)\neq0,  \\
                 1-\frac{2(\alpha_i,\alpha_j)}{(\alpha_i,\alpha_i)} , & \mbox{if }(\alpha_i,\alpha_i)\neq0.
               \end{cases}$$
 \end{enumerate}
\end{definition}
\noindent
The left equality in (iv) reminds that the commutator is understood in the graded sense, while the right one imposes a relation of $U_q(\g)$.

We will work with general linear Lie superalgebras too.
To that end, we need to extend the previous definition of  special linear $\g$ by adding Cartan generators
 $\{q^{h_{\zeta_i}} \}_{i=1}^{n+1}$ with commutation relations
$$q^{h_{\zeta_i} }  e_{\pm{\alpha_j} }  q^{-h_{\zeta_i} }=q^{\pm(\zeta_i,\alpha_j ) } e_{\pm\alpha_j }.$$
We then proceed  further by setting $q^{h_{\al_i} } =q^{h_{\zt_i} } q^{-h_{\zt_{i+1}} }
$.

A Hopf superalgebra structure is fixed by  comultiplication defined on the generators
$e_i=e_{\alpha_i }$, $f_i=e_{-\alpha_i }$, and $q^{\pm h_i}=q^{\pm h_{\al_i}}$ as
$$
\Delta(e_i)= q^{h_i }\otimes e_i+e_i\otimes 1,\quad \Delta(f_i)=f_i\otimes q^{-h_i }+1\otimes f_i,\quad \Delta(q^{\pm h_i })=q^{\pm h_i}\otimes q^{\pm h_i}.
$$
Mind that the tensor product is supercommutative: $a\tp b=(-1)^{|a||b|}b\tp a$ for all homogeneous $a$ and $b$.

The counit $\eps$ is defined as the homomorphism $U_q(\g)\to \C$ that vanishes on all $e_i$, $f_i$ and returns $1$ on $q^{\pm h_i}$.
The antipode $\gamma$ can be readily evaluated on the generators as
$$ \gamma(q^{\pm h_{i} })=q^{\mp h_{i} },\quad  \gamma(e_{i})=-q^{-h_{i} }e_{i},\quad \gamma(f_{i})=-f_{i} q^{h_{i} }$$
and extended as a graded anti-automorphism to entire $U_q(\g)$. This supercoalgebra structure extends to the Cartan generators
of the quantum general linear quantum supergroup  in the obvious way.

We denote by  $U_q(\h),\ U_q(\g_+),\ \mathrm{and}\ \U_q(\g_-)$ the $\Z_2$-graded $\C$-subalgebras of $U_q (\g)$ generated by $q^{\pm h_{i} },\ e_{i},$ and $f_{i}$  respectively.

Simple root vectors enter the set of generators of $U_q(\g)$, while composite root vectors are not given for granted. For
 quantum groups they can be constructed via the Lusztig braid group action on $U_q(\g)$  \cite{ChP}, which is not applicable to quantum supergroups.
 Still the composite root vectors can be obtained by  a method of Khoroshkin and Tolstoy \cite{KT} based on the concept of normal
 ordering of positive roots, which works in both cases. As a result, root vectors
 $e_\al\in U_q(\g_+)$ and $f_\al\in U_q(\g_-)$ can be defined for each positive $\al\in \Rm^+$.
 They are used for a description of the universal R-matrix of $U_q(\g)$ in \cite{KT}.
 We will need them in Section \ref{SecCoidSubA}
 for construction of coideal subalgebras in $U_q(\g)$.
% Along with $q^{\pm h_\al}$, they generate a quantum  supergroup of rank one in $U_q(\g)$.
%Normally ordered monomials in these elements generate a PBW basis in $U_q(\g_\pm)$ and therefore in the entire $ U_q(\g)$ (over $U_q(\h)$). The universal R-matrix can be %explicitly expressed through the root vectors as an ordered product of so-called q-exponentials over positive roots.

\subsection{Basic Quantum Supergroups}
\label{SecBasQSG}

Next we specify the root systems of basic (non-exceptional) matrix Lie  superalgebras: general linear $\g\l(N|2\m)$ and
ortho-symplectic  $\o\s\p(N|2\m)$, $\s\p\o(N|2\m)$,
with even $N$ in the latter case.
Although $\o\s\p(2\m|2\n)$ is isomorphic to $\s\p\o(2\n|2\m)$ as Lie superalgebras, they are considered as different triangular polarizations
leading to different quantum supergroups. In particular, they have different graded Dynkin diagrams and different K-matrices, see Section \ref{Sec-Gr-Satake}.

We fix the following symmetric grading on underlying vector space $V = \C^{\m+N+\m} = \C^{N|2\m}$:
$$
|i| =
\left \{
\begin{array}{cccc}
  1, & i\leqslant \m \quad \mbox{or} \quad N+\m<i,\\
  0, & \m<i\leqslant \m+N.
\end{array}
\right.
$$
With respect to this grading, there are two  odd simple roots in $\g\l(N|2\m)$ and $\o\s\p(2|2\m)$, and one odd simple root for $\o\s\p(N|2\m)$, $N\neq 2$,
and $\s\p\o(2\n|2\m)$.
Let $\g=\g_0\oplus \g_1$ be a Lie superalgebra of these types. Denote the Cartan subalgebra by $\h\subset \g_0$.
In the dual vector space $\h^*$, choose a basis  $\{\ve_i\}_{i=1}^{k} \cup \{\delta_i\}_{i=1}^{2\m}$,
where $k=N$ for $\g\l(N|2\m)$ and $k=\n$ for $\o\s\p(2\n|2\m)$, $\o\s\p(2\n+1|2\m)$, and $\s\p\o(2\n|2\m)$.
Introduce a symmetric  bilinear form on $\h^*$ by
$$
(\ve_i,\ve_j)=\delta_{i}^j=-(\delta_i,\delta_j),\ (\ve_i,\delta_j)=0.
$$
In the Dynkin diagrams below, an odd simple root
$\alpha$ is denoted by a black node if $(\alpha,\alpha)\neq0$, and a grey one if $(\alpha,\alpha)=0$. Even simple roots are represented by white nodes.

We denote the sets of even and odd roots with $\Rm_0$ and $\Rm_1$, respectively, so that $\Rm=\Rm_0\cup \Rm_1$.
Additionally, we  divide the root system into positive and negative parts $\Rm^\pm $ with the basis of simple roots $\Pi\subset \Rm^+$.

$\bullet\ \g\l(N|2\m)$
$$
 \begin{cases}
   \Rm^+=\{\ve_i-\ve_{j}\}_{i<j}\cup\{\delta_i-\delta_{j}\}_{i< j}\cup\{\delta_i-\ve_{j}\}_{i\leq \m}\cup\{\ve_{j}-\delta_i\}_{i\geq \m+1},\\
   \Pi=\{\delta_i-\delta_{i+1}\}_{i=1,i\neq \m}^{2\m-1}\cup\{\delta_{\m}-\ve_{1}\}\cup\{\ve_i-\ve_{i+1}\}_{i=1}^{N-1}\cup\{\ve_{N}-\delta_{\m+1}\}.\\
 \end{cases}
 $$
with $\Rm_0=\{\ve_i-\ve_{j}, \delta_i-\delta_{j}\}_{i\neq j}$ and $\Rm_1=\pm\{\ve_i-\delta_{j}\}$.
The Dynkin diagram, which describes the relations among simple roots  and indicates their parity, is

\begin{center}
\begin{picture}(330,30)
\put(0,10){\circle{3}}
\put(30,10){\circle{3}}

\put(80,10){\circle{3}}
\put(110,10){\color{gray}\circle*{3}}
\put(140,10){\circle{3}}

\put(82,10){\line(1,0){26}}
\put(112,10){\line(1,0){26}}

\put(142,10){\line(1,0){10}}
\put(178,10){\line(1,0){10}}
\put(160,10){$\ldots$}
\put(190,10){\circle{3}}
\put(220,10){\color{gray}\circle*{3}}
\put(250,10){\circle{3}}

\put(192,10){\line(1,0){26}}
\put(222,10){\line(1,0){26}}

\put(302,10){\line(1,0){26}}

\put(300,10){\circle{3}}
\put(330,10){\circle{3}}

\put(252,10){\line(1,0){10}}
\put(288,10){\line(1,0){10}}
\put(270,10){$\ldots$}

\put(2,10){\line(1,0){26}}
\put(32,10){\line(1,0){10}}
\put(68,10){\line(1,0){10}}
\put(50,10){$\ldots$}

\put(55,14){\smaller[2]$\delta_{\m-1}-\delta_{\m}$}

\put(-15,14){\smaller[2]$\delta_1-\delta_2$}
\put(15,1){\smaller[2]$\delta_2-\delta_3$}

\put(95,1){\smaller[2]$\delta_{\m}-\ve_1$}
\put(125,14){\smaller[2]$\ve_1-\ve_2$}
\put(167,14){\smaller[2]$\ve_{N-1}-\ve_{N}$}

\put(205,1) {\smaller[2]$\ve_{N}-\delta_{\m+1}$}
\put(230,14) {\smaller[2]$\delta_{\m+1}-\delta_{\m+2}$}

\put(280,1) {\smaller[2]$\delta_{2\m-2}-\delta_{2\m-1}$}
\put(310,14) {\smaller[2]$\delta_{2\m-1}-\delta_{2\m}$}

 \end{picture}
\end{center}

$\bullet\ \o\s\p(2\n+1|2\m)$.
We consider separately two cases: $\n\neq 0$ and $\n=0$.\\
For $\n\neq 0$, we have
 $$
 \begin{cases}
  \Rm^+=\{\ve_i\pm\ve_{j}\}_{i<j}\cup\{\delta_i\pm\delta_{j}\}_{i< j}\cup\{\delta_i \pm \ve_j,\ve_i,2\delta_i,\delta_i\},\\
   \Pi=\{\delta_i-\delta_{i+1}\}_{i=1}^{\m-1}\cup\{\delta_\m - \ve_1\} \cup \{\ve_{i} - \ve_{i+1}\}_{i=1}^{\n-1}\cup \{\ve_\n \}.\\
 \end{cases}
$$
with $\Rm_0=\{\pm \ve_i\pm \ve_j,\pm2\delta_i,\pm \delta_i\pm \delta_j,\pm\ve_i\}_{i\neq j}$, $\Rm_1=\{\pm\delta_i,\pm\delta_i\pm\ve_j\}$,
and  Dynkin diagram
\begin{center}
\begin{picture}(200,30)
\put(0,10){\circle{3}}
\put(30,10){\circle{3}}

\put(80,10){\circle{3}}
\put(110,10){\color{gray}\circle*{3}}
\put(140,10){\circle{3}}

\put(82,10){\line(1,0){26}}
\put(112,10){\line(1,0){26}}

\put(142,10){\line(1,0){10}}
\put(178,10){\line(1,0){10}}
\put(160,10){$\ldots$}
\put(190,10){\circle{3}}

\put(2,10){\line(1,0){26}}
\put(32,10){\line(1,0){10}}
\put(68,10){\line(1,0){10}}
\put(50,10){$\ldots$}

\put(55,14){\smaller[2]$\delta_{\m-1}-\delta_{\m}$}

\put(-15,14){\smaller[2]$\delta_1-\delta_2$}
\put(15,1){\smaller[2]$\delta_2-\delta_3$}

\put(94,1){\smaller[2]$\delta_{\m}-\ve_1$}
\put(125,14){\smaller[2]$\ve_1-\ve_2$}
\put(167,14){\smaller[2]$\ve_{\n-1}-\ve_{\n}$}

\put(220,1) {\smaller[2]$\ve_{\n}$}

\put(220,10){\circle{3}}

\put(191,8.5){\line(1,0){24}}
\put(191,11.5){\line(1,0){24}}
\put(211,7){$>$}
 \end{picture}
\end{center}
 In the case of $\n=0$, we have
  $$
 \begin{cases}
  \Rm^+=\{\delta_i\pm\delta_{j}\}_{i< j}\cup\{2\delta_i,\delta_i\},\\
   \Pi=\{\delta_i-\delta_{i+1}\}_{i=1}^{\m-1}\cup\{\delta_\m \}.\\
 \end{cases}$$
with   $\Rm_0=\{\pm2\delta_i,\pm \delta_i\pm \delta_j\}_{i\neq j}$, $\Rm_1=\{\pm\delta_i\}$.
The Dynkin diagram is

  \begin{center}
\begin{picture}(120,30)
\put(0,10){\circle{3}}
\put(1.5,10){\line(1,0){27}}
\put(32,10){\line(1,0){10}}
\put(68,10){\line(1,0){10}}
\put(30,10){\circle{3}}
\put(80,10){\circle{3}}
\put(47,10){$\ldots$}

\put(110,10){\circle*{3}}

\put(81,8.5){\line(1,0){24.5}}
\put(81,11.5){\line(1,0){24.5}}
\put(101.5,7){$>$}
%{$\s\o(2n+1)$}

\put(107,14){\smaller[2]$\delta_\m$}

\put(0,14){\smaller[2]$\delta_1-\delta_2$}
\put(20,0){\smaller[2]$\delta_2-\delta_3$}
\put(70,0){\smaller[2]$\delta_{\m-1}-\delta_{\m}$}
 \end{picture}
\end{center}
$\bullet\ \o\s\p(2\n|2\m)$. Here we also distinguish two cases: $\n\neq 1$ and $\n=1$.\\
For $\n\neq1$, we have
$$
 \begin{cases}
   \Rm^+=\{\ve_i\pm\ve_{j}\}_{i<j}\cup\{\delta_i\pm\delta_{j}\}_{i< j}\cup\{\delta_j \pm \ve_i,2\delta_i\},\\
  \Pi=\{\delta_i-\delta_{i+1}\}_{i=1}^{\m-1}\cup \{\delta_\m - \ve_1\} \cup \{\ve_i -\ve_{i+1}\}_{i=1}^{\n-1} \cup \{\ve_{\n-1} + \ve_\n\}.\\
 \end{cases}
 $$
with $\Rm_0=\{\pm \ve_i\pm \ve_j,\pm2\delta_i, \pm\delta_i \pm\delta_j,\}_{i\neq j}$,  $\Rm_1=\{\pm\delta_i\pm\ve_j\}$, and the Dynkin diagram

\begin{center}
\begin{picture}(200,30)
\put(0,10){\circle{3}}
\put(30,10){\circle{3}}

\put(80,10){\circle{3}}
\put(110,10){\color{gray}\circle*{3}}
\put(140,10){\circle{3}}

\put(82,10){\line(1,0){26}}
\put(112,10){\line(1,0){26}}

\put(142,10){\line(1,0){10}}
\put(178,10){\line(1,0){10}}
\put(160,10){$\ldots$}
\put(190,10){\circle{3}}

\put(2,10){\line(1,0){26}}
\put(32,10){\line(1,0){10}}
\put(68,10){\line(1,0){10}}
\put(50,10){$\ldots$}

\put(55,14){\smaller[2]$\delta_{\m-1}-\delta_{\m}$}

\put(-15,14){\smaller[2]$\delta_1-\delta_2$}
\put(15,1){\smaller[2]$\delta_2-\delta_3$}

\put(94,1){\smaller[2]$\delta_{\m}-\ve_1$}
\put(125,14){\smaller[2]$\ve_1-\ve_2$}
\put(153,1){\smaller[2]$\ve_{\n-2}-\ve_{\n-1}$}

\put(191.5,11.5){\line(3,2){25}}
\put(191.5,8.5){\line(3,-2){25}}
\put(217.5,29){\circle{3}}
\put(217.5,-9){\circle{3}}

\put(222,-9){\smaller[2]$\ve_{\n-1}-\ve_{\n}$}
\put(222,29){\smaller[2]$\ve_{\n-1}+\ve_{\n}$}
 \end{picture}
\end{center}
When  $\n=1$, the root system is
 $$\begin{cases}
   \Rm^+=\{\delta_i\pm\delta_{j}\}_{i< j}\cup\{\delta_j \pm\ve_1,2\delta_i\},\\
  \Pi=\{\delta_i-\delta_{i+1}\}_{i=1}^{\m-1}\cup \{\delta_\m - \ve_1\}\cup \{\delta_\m + \ve_1\},
 \end{cases}$$
with $\Rm_0=\{\pm2\delta_i, \pm\delta_i \pm\delta_j,\}_{i\neq j}$ and $\Rm_1=\{\pm\delta_i\pm\ve_1\}$.
Its Dynkin diagram is
\begin{center}
\begin{picture}(100,60)

\put(0,30){\circle{3}}
\put(1.5,30){\line(1,0){27}}

\put(32,30){\line(1,0){10}}
\put(68,30){\line(1,0){10}}
\put(30,30){\circle{3}}
\put(47,30){$\ldots$}
\put(80,30){\circle{3}}
\put(81.5,31.5){\line(3,2){25}}
\put(81.5,28.5){\line(3,-2){25}}
\put(107.5,49){\color{gray}\circle*{3}}
\put(107.5,11){\color{gray}\circle*{3}}

\put(-10,20){\smaller[2]$\delta_1-\delta_2$}
\put(20,35){\smaller[2]$\delta_2-\delta_3$}
\put(50,20){\smaller[2]$\delta_{\m-1}-\delta_\m$}
\put(112,8){\smaller[2]$\delta_\m+\ve_1$}
\put(112,48){\smaller[2]$\delta_\m-\ve_1$}

\put(106.5,12){\line(0,1){36}}
\put(108.5,12){\line(0,1){36}}

\put(170,27)

\end{picture}
\end{center}
$\bullet\ \s\p\o(2\n|2\m)$ has the root system
$$
  \begin{cases}
   \Rm^+=\{\ve_i\pm\ve_{j}\}_{i<j}\cup\{\delta_i\pm\delta_{j}\}_{i< j}\cup\{\delta_j \pm \ve_i,2\ve_i\},\\
  \Pi=\{\delta_i-\delta_{i+1}\}_{i=1}^{\m-1}\cup \{\delta_\m - \ve_1\} \cup \{\ve_i -\ve_{i+1}\}_{i=1}^{\n-1} \cup \{2\ve_\n\},\\
 \end{cases}
$$
with
$\Rm_0=\{\pm \ve_i\pm \ve_j,\pm2\ve_i, \pm\delta_i \pm\delta_j,\}_{i\neq j}$, $\Rm_1=\{\pm\delta_i\pm\ve_j\}$, and
the Dynkin diagram
\begin{center}
\begin{picture}(200,30)
\put(0,10){\circle{3}}
\put(30,10){\circle{3}}

\put(80,10){\circle{3}}
\put(94,1){\smaller[2]$\delta_{\m}-\ve_1$}
\put(110,10){\color{gray}\circle*{3}}
\put(140,10){\circle{3}}

\put(82,10){\line(1,0){26}}
\put(112,10){\line(1,0){26}}

\put(142,10){\line(1,0){10}}
\put(178,10){\line(1,0){10}}
\put(160,10){$\ldots$}
\put(190,10){\circle{3}}

\put(2,10){\line(1,0){26}}
\put(32,10){\line(1,0){10}}
\put(68,10){\line(1,0){10}}
\put(50,10){$\ldots$}

\put(55,14){\smaller[2]$\delta_{\m-1}-\delta_{\m}$}

\put(-15,14){\smaller[2]$\delta_1-\delta_2$}
\put(15,1){\smaller[2]$\delta_2-\delta_3$}

\put(125,14){\smaller[2]$\ve_1-\ve_2$}
\put(167,14){\smaller[2]$\ve_{\n-1}-\ve_{\n}$}

\put(220,1) {\smaller[2]$2\ve_{\n}$}

\put(220,10){\circle{3}}

\put(194,8.5){\line(1,0){24.5}}
\put(194,11.5){\line(1,0){24.5}}
\put(190,7){$<$}
 \end{picture}
\end{center}

 \subsection{Natural Representations}
In this section we describe an irreducible representation of $U_q(\g)$ on the graded vector space
$\End(\C^{N|2\m})$. Let $\{v_i\}\subset \C^{N|2\m}$ be the standard homogeneous weight basis.
We call the grading symmetric (see Definition \ref{Weyl-operator} below) if $|v_i|=|v_{i'}|$ for all $i$, where
$$
i'=N+2\m+1-i.
$$
We  choose a symmetric grading  with minimal number of odd simple roots:
$$
\left(\underbrace{1, \ldots, 1}_\m ;\underbrace{0,\ldots,0}_{N};\underbrace{1, \ldots, 1}_\m\right).
$$
We will use notation $|i|=|v_i|$ for all basis vectors $v_i$.

Let  $\{e_{ij}\}_{i,j=1}^{N+2m}\subset  \End(\C^{N|2\m})$  be the standard matrix basis with multiplication  $e_{ij}e_{mn}=\dt_{jm}e_{in}$. An irreducible representation $\pi:U_q(\g) \rightarrow \End(\C^{N|2\m})$ is defined by the following assignment:
$$
q^{h_{\zeta_i}} \mapsto\ \sum_{j=1}^{N}q^{ (-1)^{|i|}\delta_j^i} e_{jj},\quad\mathrm{if}\ i\leq n+1,\   \g=\g\l(n+1),
$$
\be
q^{h_i} \mapsto&&
\hspace{-15pt}
\begin{cases}
\sum_{j=1}^{N+2\m}q^{(-1)^{|i|}( \delta_j^i-\delta_j^{i'})+(-1)^{|i+1|}(- \delta_j^{(i+1)}+\delta_j^{(i+1)'})}e_{jj},&
\mathrm{if}\ i< \n+\m,\
\g=\begin{cases}\o\s\p(2\n+1|2\m), \\
\o\s\p(2\n|2\m),\\  \s\p\o(2\n|2\m),\end{cases}
\\
\sum_{j=1}^{N+2\m}q^{(-1)^{|i|}(\delta_j^i-\delta_j^{i'})}e_{jj},&\mathrm{if}\ i=\m+\n ,\ \g=\o\s\p(2\n+1|2\m), \\
\sum_{j=1}^{N+2\m}q^{2\delta_j^{i}-2\delta_j^{i'}}e_{jj},&\mathrm{if}\ i=\m+\n ,\ \g=\s\p\o(2\n|2\m), \\
\sum_{j=1}^{N+2\m}q^{(-1)^{|i-1|}(\delta_j^{(i-1)}-\delta_j^{(i-1)'})} q^{(-1)^{|i|}( \delta_j^{i}-\delta_j^{i'})}e_{jj},&\mathrm{if}\ i=\m+\n ,\ \g=\o\s\p(2\n|2\m), \\
     \end{cases}
\nonumber
\ee
\be
e_i\mapsto&&\hspace{-15pt}
\begin{cases}
e_{i,i+1},&\mathrm{if}\ i\leq N+2\m-1  ,\ \g=\g\l(N|2\m),\\
q^{-\delta_i^\m}e_{i,i+1}-(-1)^{(|i|)(|i+1|+1)}e_{(i+1)',i'},& \mathrm{if}\ i\leq \m+\n, \
\g=\begin{cases}\o\s\p(2\n+1|2\m), \\
\o\s\p(2\n|2\m),\\  \s\p\o(2\n|2\m),\end{cases}\\
q^{-\delta_{i-1}^\m}e_{i-1,i+1}-(-1)^{|i-1|}e_{(i+1)',(i-1)'},&\mathrm{if}\ i=\m+\n ,\ \g=\o\s\p(2\n|2\m), \\
e_{i,i+1},&\mathrm{if}\ i=\m+\n  ,\ \g=\s\p\o(2\n|2\m),\\
     \end{cases}
     \nn
\ee
\begin{equation*}
f_{i}\mapsto\begin{cases}
  -e_{i+1,i}, & \mbox{if }\ i=\m,\ \g=\g\l(N|2\m), \\
  -q e_{i+1,i}+e_{i',(i+1)'}, & \mbox{if }\ i=\m,\ \g=\o\s\p(N|2\m), \  \mathrm{or}\ \g=\s\p\o(2\n|2\m),\\
   -q e_{i+1,i-1}+e_{(i-1)',(i+1)'}, & \mbox{if }\ i=\m+1,\ \g=\o\s\p(2|2\m),
\end{cases}
\end{equation*}
and  the similar assignment  to the remaining $f_i$ as for $e_i$ with  $e_{ij}$  changed  to $e_{ji}$.
In the tensor square of this representation,  the universal R-matrix mentioned in
Section 2.1 has the following expression, up to a scalar multiple:
\begin{equation}\label{R-sl}
R=\sum_{i,j}q^{(-1)^{|i|}\dt_{ij}} e_{ii}\tp e_{jj}+\omega\sum_{j<i}(-1)^{|j|}e_{ij}\tp e_{ji},
\end{equation}
for  general  linear   $\g$ \cite{Is,Zh} and
\begin{equation}\label{R-osp}
R=\sum_{i,j=1} q^{(-1)^{|j|}(\delta_{ij}-\delta_{ij'})}e_{ii}\otimes e_{jj} +\omega\sum\limits_{\substack{j,i=1\\ j<i}} ((-1)^{|j|}e_{ij}\otimes e_{ji} -(-1)^{|i|+|j|+|i||j|}\kappa_i\kappa_jq^{\rho_i-\rho_j} e_{ij}\otimes e_{i'j'}),
\end{equation}
for  ortho-symplectic $\g$ \cite{Is,DGL}.
Here
{\footnotesize$$
  (\rho_i)=\begin{cases}
    (k-\m,\dots,k-1;k-1,\dots,\frac{1}{2},0,-\frac{1}{2},\dots,1-k;1-k,\dots,\m-k),\ k=\frac{2\n+1}{2}, \ \mathrm{for}\  \o\s\p(2\n+1|2\m),\\
    (\n-\m,\dots,\n-1;\n-1,\dots,1,0,0,-1,\dots,1-\n;1-\n,\dots,\m-\n),\ \mathrm{for}\  \o\s\p(2\n|2\m), \\
    (\n-\m+1,\dots,\n;\n,\dots,1,-1,\dots,1-\n;1-\n,\dots,\m-\n-1), \ \mathrm{for}\  \s\p\o(2\n|2\m),
  \end{cases}
 $$}
 $$(\kappa_i)= \begin{cases}
    (-1,\dots,-1;1,\dots,1;1,\dots,1),\ \mathrm{for}\  \o\s\p(N|2\m),\\
    (-1,\dots,-1;1,\dots,1,-1,\dots,-1;-1,\dots,-1), \ \mathrm{for}\  \s\p\o(2\n|2\m).
  \end{cases}
$$
Let us emphasise that the tensor product in (\ref{R-sl}) and  (\ref{R-osp}) is graded. The identity  $e_{ij}\tp e_{lk}=(-1)^{(i+j)(l+k)} e_{lk}\tp e_{ij}$
is satisfied for all matrix units.

%\vspace{2cm}

\section{Graded Reflection Equation}
\label{SecKmat}
The non-graded version of RE appeared in mathematical physics literature \cite{KSkl,KSS} and triggered
the theory of quantum symmetric pairs in \cite{NS,NDS,Let}. The graded RE has been of interest as well, mostly in
the spectral parameter dependent form \cite{AACLFR,AACLFR1,L,DK}.

In this section, we present a class of solutions to the constant RE for the general linear and orthosymplectic quantum supergroups.
We are interested in invertible even RE-matrices.  Solutions for the general linear supergroups are taken  from our recent paper \cite{AlgMS1}.

\subsection{$\Z_2$-graded Reflection Equation}\label{sec-z2-re}

Suppose that $V$ is a graded vector space and $\Ac=\End(V)$ is the corresponding graded matrix algebra.
An invertible element $R\in \Ac\tp \Ac$ is called an $R$-matrix if it satisfies  Yang-Baxter equation
$$
R_{12}R_{13}R_{23}=R_{23}R_{13}R_{12},
$$
where the subscripts indicate the tensor factor in the graded tensor cube of $\End(V)$.
An even element $P=\sum_{i=1}^{n}(-1)^{|j|}e_{ij}\tp e_{ji}\in \End(V)\tp \End(V)$ is called graded permutation. It flips the
tensor factors in $V\tp V$ by the rule
$$
P(v\tp w)=(-1)^{|v||w|}w\tp v
$$
for all homogeneous $v,w\in V$.
The operator $S=PR\in \End(V)\tp \End(V)$ satisfies the braid relation
$$
S_{12}S_{23}S_{12}=S_{23}S_{12}S_{23}.
$$
A matrix $K\in \End(V)$ is said to satisfy  RE if the identity
\be
\label{REuntw}
SK_2S K_2= K_2 S K_2 S
\ee
holds true in $\End(V)\tp \End(V)$. In particular, a scalar matrix satisfies this equation. This solution
is not interesting and should be considered as trivial.
We will  assume that $K$ is even. Note that the particular form (\ref{REuntw}) is related to left coideal subalgebras, cf. Section \ref{SecCoidSubA}.

 K-matrices amount to cylindric braiding in  a suitable category of $U_q(\g)$-modules, \cite{DH}, via a sort of fusion procedure similar to
R-matrices.
Cylindric brading is a collection of intertwiners $K_M\in \End_{U_q(\k)}(M)$ parameterized by  a $U_q(\g)$-module $M$ that satisfy
Reflection Equation
$$
S_{M,M}(\id \tp K_M)S_{M,M}(\id \tp K_M)=(\id \tp K_M)S_{M,M}(\id \tp K_M)S_{M,M},
$$
where $S_{M,M}$ is the product $P_{M,M}R_{M,M}$ of the permutation $P_{M,M}$ and the image $R_{M,M}$ of the universal
matrix $\Ru$  in $\End(M\tp M)$.
In the non-graded case, $K_M$ can be  obtained as the representation of an element  $\Kc$ (universal K-matrix) from a completion of
$U_q(\g)$ that satisfies
$$
\Delta(\Kc)=\Ru_{21}\Kc_1\Ru_{12} \Kc_2.
$$
The universal K-matrix is constructed in  \cite{BK} for symmetric pairs $(\g,\k)$ with symmetrizable Kac-Moody algebra $\g$ of finite type,
in development of \cite{BW} for the general linear case.
Its construction is extended for pseudo-symmetric pairs in \cite{RV} and to all symmetrizable Kac-Moody algebras in \cite{AV}.
It is natural to expect a  generalization of the universal K-matrix for  quantum super-spherical pairs.

Equation (\ref{REuntw}) admits generalizations with applications to integrable systems \cite{FM}.
We present here two  versions which are of relevance to left coideal subalgebras.
Suppose that $\si\colon U_q(\g)\to U_q(\g)$  is a  superinvolutive superalgebra graded anti-automorphism and supercoalgebra automorphism (Chevalley superinvolution) such that
$\pi\circ \si(x)=\pi^t(x)$ for all $x\in U_q(\g)$, where $\pi$ is the representation of $U_q(\g)$ on $V$ and $t$ is the matrix super-transposition. If $R^{t_1t_2}=R_{21}$, then the identity
\be
\label{REtw}
R_{21}K_1 R_{21}^{t_1}K_2=K_2R_{12}^{t_2}K_1R_{21}
\ee
is called twisted RE. Here $t_i$, $i=1,2$ designate the super-transposition  applied to the $i$-th tensor factor.

Now suppose that $\vartheta\colon U_q(\g)\to U_q(\g)$ is an involutive  Hopf superalgebra automorphism and $\theta\colon \End(V)\to \End(V)$ a matrix
super-algebra automorphism (conjugation with a fixed invertible even matrix) such that $\pi\circ \vartheta(x)=\theta\circ\pi(x)$ for all $x\in U_q(\g)$. Suppose also that
$R^{\theta_1\theta_2}=(\theta\tp \theta)(R)=R$.
Then one can consider a twisted RE in  the form
\be
\label{REtw1}
R_{21}K_1R^{\theta_1}_{12}K_2=K_2R_{21}^{\theta_2}K_1R_{12},
\ee
In the special case $\vartheta =\id$ we return to the equation (\ref{REuntw}).
We will focus on (\ref{REuntw}) in the current paper.

In what follows, we mean by  RE only the form (\ref{REuntw}) unless otherwise is explicitly stated.
The case of the general linear quantum supergroup has been studied in detail in \cite{AlgMS1}, where the full list of solutions to RE is given relative
to an arbitrary grading of $V$.
They turn out to be exactly the even matrices solving the non-graded RE. Invertible solutions
occur only for a (arbitrary) symmetric grading
(here we use a chance to correct a sloppy remark  on page 6 of \cite{AlgMS1})
and have the form
\be
\label{A-gl}
 K=(\la+\mu)\sum_{i=1}^{m}e_{ii}+\la\sum_{i=m+1}^{N+2\m-m} e_{ii}+\sum_{i=1}^{m}y_{i}e_{ii'}+\sum_{i=1}^{m}y_{i'}e_{i'i},
\ee
where $y_i$ are complex numbers subject to  $y_iy_{i'}=-\la\mu\not =0$.
This matrix satisfies the RE with the R-matrix (\ref{R-sl}).
We present a class of solutions for the ortho-symplectic quantum supergroups in the next section.

\subsection{K-matrices for ortho-symplectic quantum groups}

In this section, we find even K-matrices for the ortho-symplectic quantum  supergroups.
We introduce a grading on the underlying vector space $V = \C^{N|2\m}$ by setting
$$
|i| =
\left \{
\begin{array}{cccc}
  1, & i\leqslant \m &\mbox{or} & N+\m<i,\\
  0, & \m<i\leqslant \m+N.
\end{array}
\right.
$$
This is a symmetric grading with the least number of odd simple roots.
We are looking for solutions of  the RE in the following three forms:
$$
A=\sum_{i=1}^{N+2\m}x_{i}e_{ii}+\sum_{i=1}^{N+2\m}y_{i}e_{ii'},
\quad
B=\sum_{i=1}^{N+2\m}x_ie_{ii}+\sum_{i=1}^{N+2\m}y_i(e_{i, i'-1}-e_{i+1,i'}),
$$
$$
 C=\sum\limits_{\substack{i=1\\ i\neq n}}^{2n-1}x_i(e_{i, i'-1}-e_{i+1,i'})+x_n e_{n,n'}+x_{n'}e_{n',n},\quad  \mathrm{for}\ \g=\o\s\p(2|4\m).
$$
The matrices $A$ and $B$ will depend on an integer parameter  $m$  subject to inequality $1\leq m\leq \m$ in the forthcoming theorem.
\begin{thm} \label{s-graded}
The following matrices satisfy the Reflection Equation associated with  ortho-symplectic quantum groups:
  \begin{equation}
  A=\sum_{i=1}^m \lambda(1-\kappa_m\kappa_{m'}q^{-2\rho_{m}}) e_{ii}+\sum_{m<i<m'}\lambda e_{ii}+\sum\limits_{\substack{i\leqslant m} }(y_ie_{i, i'}+y_{i'}e_{i', i'}),
\end{equation}
 \begin{equation}
\begin{split}
B=\sum_{i=1}^m& \lambda(1+\kappa_{m-1}\kappa_{m'+1}q^{-2(\rho_{m}+1)}) e_{ii}+\sum_{m<i<m'}\lambda e_{ii}\hspace{30pt}
\\ &
\hspace{30pt}+\sum\limits_{\substack{i\leqslant m
\\
i= 1\!\!\!\!\mod 2}}\bigl(z_i(e_{i, i'-1}-e_{i+1,i'})+z_{i'-1}(e_{i'-1, i}-e_{i',i+1})\bigr),
\end{split}
 \end{equation}
  \begin{equation}
 C=\sum\limits_{\substack{i=1\\i= 1\!\!\!\!\mod 2 \\ i\neq n}}^{2n-1}x_i(e_{i, i'-1}-e_{i+1,i'})+x_n e_{n,n'}+x_{n'}e_{n',n},
\end{equation}
where the parameters $y_i,z_i, x_i \in \C$ satisfy the conditions
\begin{itemize}
  \item $y_iy_{i'}=\kappa_m\kappa_{m'}\lambda^2 q^{-2\rho_{m}}$,
  \item $z_{i}z_{i'-1}=-\kappa_{m-1}\kappa_{m'+1}\lambda^2 q^{-2(\rho_{m}+1)}$,
\item $x_{i}x_{i'-1}=x_nx_{n'}$.
\end{itemize}
\end{thm}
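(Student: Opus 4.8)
The plan is to verify the Reflection Equation (\ref{REuntw}) by a direct computation, after first rewriting it in a more familiar form. Since $S=PR$ with $R$ the ortho-symplectic $R$-matrix (\ref{R-osp}), and since $P$, $R$ and $K$ are all even — so that no Koszul signs intervene when they are multiplied — one has the operator identities $P^2=\id$, $PK_2P=K_1$ and $PR_{12}P=R_{21}$. A short manipulation (moving the inner $K_2$ past the permutations) then shows that $SK_2SK_2=K_2SK_2S$ is equivalent to the constant Reflection Equation in its standard shape,
\[
R_{21}K_1R_{12}K_2=K_2R_{21}K_1R_{12},
\]
with $R_{21}=PR_{12}P$. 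Both sides lie in $\End(V)\tp\End(V)$, so the verification amounts to comparing the coefficient of each matrix unit $e_{ab}\tp e_{cd}$, the arithmetic being controlled by the products $e_{ij}e_{kl}=\dt_{jk}e_{il}$ inside each tensor leg together with the parities $|i|$ and the data $\kappa_i$, $\rho_i$ of (\ref{R-osp}).

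The useful structural point is that this identity localizes: because $R$, $R_{21}$ and each of $A,B,C$ are banded with respect to the index ordering and the mirror map $i\mapsto i'$, evaluating either side on $v_a\tp v_b$ produces only vectors $v_c\tp v_d$ with $c,d$ in a small set determined by $\{a,a',b,b'\}$ and by the "middle" range $m<i<m'$. It therefore suffices to restrict the operator identity to the corresponding low-dimensional invariant subspaces of $V\tp V$. On the blocks built entirely from the middle vectors $v_i$, $m<i<m'$ — where $A$, $B$, $C$ act as the scalar $\la$ — the equation collapses to $\la^2R_{21}R_{12}=\la^2R_{21}R_{12}$ and holds trivially. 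The content of the theorem sits in the remaining blocks, which involve a vector of one of the two-dimensional pieces $\Span\{v_i,v_{i'}\}$, $i\le m$ (for $C$, the analogous "doubled" vectors), possibly together with a middle vector or with a second such pair.

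On those blocks I would expand $R_{21}K_1R_{12}K_2$ and $K_2R_{21}K_1R_{12}$ term by term. The $\omega$-contributions telescope along the chain of positions between $m$ and $m'$, and the two sides match exactly when the diagonal entry that $K$ takes on $v_i$, $i\le m$, equals the value dictated by the anti-diagonal datum of $R$ at the boundary position $m$ — for $A$ this is $\la(1-\kappa_m\kappa_{m'}q^{-2\rho_m})$ — and when the product of the two off-diagonal entries equals the prescribed scalar, namely $y_iy_{i'}=\kappa_m\kappa_{m'}\la^2q^{-2\rho_m}$. The case $K=B$ is handled by the same scheme once one observes that $e_{i,i'-1}-e_{i+1,i'}$ is the image of a single matrix unit under the sign-twisted change of basis that distinguishes the even-orthogonal series from the odd-orthogonal and symplectic ones; the boundary sits at $m$ again but with a shift, yielding the diagonal value $\la(1+\kappa_{m-1}\kappa_{m'+1}q^{-2(\rho_m+1)})$ and the constraint $z_iz_{i'-1}=-\kappa_{m-1}\kappa_{m'+1}\la^2q^{-2(\rho_m+1)}$. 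For $K=C$ with $\g=\o\s\p(2|4\m)$ the space $V$ is small enough that the finitely many remaining blocks can be written out explicitly, producing $x_ix_{i'-1}=x_nx_{n'}$.

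The main obstacle is not conceptual but the sign and scalar bookkeeping: every graded transposition and every product of matrix units inside a tensor leg contributes a $(-1)$-power in the parities $|i|,|j|,\dots$, and these have to be reconciled consistently with the scalars $\kappa_i$ and the half-integer exponents $\rho_i$ in (\ref{R-osp}). The one genuinely delicate step is to see that only the boundary value $\rho_m$ — not the values $\rho_i$ for all $i\le m$ — survives in the final constraints; this reflects the fact that the off-diagonal block of $K$ couples a single extreme pair of weights, so the proof ultimately comes down to verifying, block by block, the correct cancellation of the string of $\omega$-terms running between positions $m$ and $m'$.
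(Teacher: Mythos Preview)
Your proposal is correct and matches the paper's approach: the paper's entire proof is the two words ``Direct calculation,'' and what you have written is a sensible outline of how such a calculation is organized (the rewriting of (\ref{REuntw}) as $R_{21}K_1R_{12}K_2=K_2R_{21}K_1R_{12}$ via $P^2=\id$, $PK_2P=K_1$, $PR_{12}P=R_{21}$ is valid for even $K$, and the block-localization you describe is the natural way to reduce the check to finitely many small matrix identities). Your sketch is in fact more detailed than the paper's own proof.
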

\begin{proof}
 Direct calculation.
\end{proof}
The presented K-matrices exhaust all of invertible solutions to the RE for
general linear $\g$, and they all are necessarily even \cite{AlgMS1}.
A full classification of K-matrices for orthogonal and symplectic
$\g$ is unknown even in the non-graded case.  Theorem \ref{s-graded}
gives the "simplest" examples, see the discussion in Section \ref{Sec-Gr-Satake}.
They  have the following shape:
\begin{center}
\begin{picture}(110,110)
\put(-30,45){$A=$}
\put(-5,-5){\line(0,1){110}}
\put(110,-5){\line(0,1){110}}
\put(75,30){\line(0,1){40}}
\put(35,30){\line(0,1){40}}
\multiput(15,95)(3,-3){3}{\circle*{1}}
\multiput(52,52)(3,-3){3}{\circle*{1}}
\multiput(95,95)(-3,-3){3}{\circle*{1}}
\multiput(8,8)(3,3){3}{\circle*{1}}

\put(100,100){$\scriptstyle y_{1}$}

\put(0,100){$\scriptstyle \lambda+\mu$}
\put(20,80){$\scriptstyle \lambda+\mu$}

\put(37,62){$\scriptstyle \la$}

\put(68,32){$\scriptstyle \la$}

\put(80,80){$\scriptstyle y_{m}$}

\put(20,20){$\scriptstyle y_{m'}$}
\put(0,0){$\scriptstyle y_{1'}$}

\end{picture}\>\>,
\quad\quad\quad\quad\quad\quad
\begin{picture}(110,110)
\put(-30,45){$B=$}
\put(-5,-5){\line(0,1){110}}
\put(110,-5){\line(0,1){110}}
\put(75,30){\line(0,1){40}}
\put(35,30){\line(0,1){40}}
\multiput(15,95)(3,-3){3}{\circle*{1}}
\multiput(52,52)(3,-3){3}{\circle*{1}}
\multiput(95,95)(-3,-3){3}{\circle*{1}}
\multiput(10,8)(3,3){3}{\circle*{1}}

\put(95,100){$\scriptstyle z_{1}\nu$}

\put(0,100){$\scriptstyle \lambda+\mu$}
\put(20,80){$\scriptstyle \lambda+\mu$}

\put(37,62){$\scriptstyle \la$}

\put(68,32){$\scriptstyle \la$}

\put(75,80){$\scriptstyle z_{m-1}\nu$}

\put(20,20){$\scriptstyle z_{m'}\nu$}
\put(0,0){$\scriptstyle z_{2'}\nu$}
\end{picture}
\>\>,
\end{center}
\begin{center}
\begin{picture}(90,90)
\put(-30,45){$C=$}
\put(-5,-5){\line(0,1){90}}
\put(90,-5){\line(0,1){90}}

\multiput(75,75)(-3,-3){3}{\circle*{1}}
\multiput(10,8)(3,3){3}{\circle*{1}}

\put(48,48){$\scriptstyle x_{n}$}
\put(35,35){$\scriptstyle x_{n+1}$}

\put(75,80){$\scriptstyle x_{1}\nu$}

\put(55,60){$\scriptstyle x_{n-2}\nu$}

\put(20,20){$\scriptstyle x_{n+2}\nu$}
\put(0,0){$\scriptstyle x_{2'}\nu$}
\end{picture}
\>\>,
\end{center}
with  $\nu=\begin{bmatrix}
1&0\\
0&-1
\end{bmatrix}
$
 and $\mu= -\la\kappa_m\kappa_{m'}q^{-2\rho_{m}}$ for $A$, $\mu=\la \kappa_{m-1}\kappa_{m'+1}q^{-2(\rho_{m}+1)}$ for  $B$.
In both cases of  $A$ and $B$, the  blocks with diagonal entries of  $\la+\mu$ and $\la$  are, respectively, of size $m$ and  $N + 2\m - 2m$,
 with  $m\leq\m$ as stipulated.
We relate them with coideal subalgebras in $U_q(\g)$ in Section \ref{SecCoidSubA}.

\section{Classical super-spherical pairs}
In this section we define  Lie superalgebras $\k\subset \g$ that give rise to
coideal subalgebras in generalization of the Letzter theory for non-graded quantum groups.
\subsection{Weyl operator and spherical data }
\label{SecWOp}
 Let $\g$ be a Lie super-algebra that features a triangular decomposition with Cartan subalgebra $\h$,  and let  $\b\subset \g$ be one its Borel subalgebras
 containing $\h$.
\begin{definition}
A Lie super-algebra $\k\subset \g$ is called spherical if $\g=\k+\b$.
Then the pair $(\g,\k)$ is called spherical.
\end{definition}
It is known that, depending on the polarization and the corresponding choice of simple root basis,
 Borel subalgebras in $\g$ are generally  not isomorphic.
Thus, contrary to the non-graded case, this definition of sphericity depends
upon a choice of $\b$.
From now on we restrict our consideration to the case when $\g$ is either general linear or ortho-symplectic.
The choice of $\b$ is determined by a grading of the underlying natural module.

Let us fix a graded  basis of weights $\zt_i$, $i=1,\ldots, N$, of the natural $\g$-module $\C^N$. They generate
the weight lattice $\La$ of $\g$.
As before we use the notation $ i'=N+1-i$ for all $i=1,\ldots, N$.
An element $w\in SL(\La)$ is said to be even if $w(\Rm_i)=\Rm_i$, $i=0,1$.
A grading on $\C^N$ splits $\{\zt_i\}_{i=1}^N$  to even and odd subsets. An element  $w\in SL(\La)$
is clearly even if it  preserves this decomposition.

\begin{definition}
\label{Weyl-operator}
\begin{enumerate}
  \item A unique $\Z$-linear map $w_\g\colon \La\mapsto \La$ defined by the assignment
$\zt_i\mapsto \zt_{i'}$, $i=1,\ldots, N,$ is called
Weyl operator.
   \item
  The grading on $\C^N$ is called symmetric if the Weyl operator is even.
\end{enumerate}
\end{definition}
Clearly, a  grading is  symmetric  if and only if  every inversion
$\si_i\colon \zt_i\mapsto \zt_{i'}$, $ \zt_j \mapsto \zt_j$, $j\not = i,i'$, which extends to an element of $SL(\La)$, is even.
It is also obvious that  even $w_\g=\prod_{i=1}^{N}\si_i$ extends to an  involutive  orthogonal operator $w_\g\colon \h^*\to \h^*$.

From now on we will work  only with the minimal symmetric grading on $\g$.
Consider the Dynkin diagram $D$ of a basic Lie superalgebra $\g$. If we discard the grading information, then we  get
a diagram $\tilde D$ that we call shape of $D$. In the special case of  $\g=\o\s\p(2|2\m)$,  we understand by $\tilde D$ the Dynkin diagram of $\s\o(2+2\m)$.
Let $\tilde W$ denote the group of  automorphisms of  $\tilde D$. Denote by $\tilde W_0$ its subgroup that preserves the grading.
\begin{lemma}
\label{transposition}
The group $\tilde W_0$  preserves the root system $\Rm$ and the weight lattice
 $\La$.
\end{lemma}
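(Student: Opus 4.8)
The plan is to reduce the claim to the fact, already established in the discussion preceding Definition \ref{Weyl-operator}, that the minimal symmetric grading is invariant under the inversions $\si_i\colon\zt_i\mapsto\zt_{i'}$, and to exploit that each $\tilde W_0$-automorphism is, by definition, a symmetry of the shape $\tilde D$ that moreover respects the even/odd partition of the simple roots.

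\medskip

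\noindent First I would record the structure of $\tilde W_0$ in each case. Since $\tilde W$ is the automorphism group of the (non-graded) shape $\tilde D$, it is the usual Dynkin-diagram automorphism group: trivial for the $A_n$-shape except for the order-two flip, trivial for $B_n$ and $C_n$, and of order two (the flip of the two tail nodes) for the $D_n$-shape — which is exactly the shape relevant to $\o\s\p(2\n|2\m)$ and, via the convention $\tilde D=D(\s\o(2+2\m))$, to $\o\s\p(2|2\m)$. In the $B$, $C$, and $\s\p\o$ shapes $\tilde W_0$ is therefore trivial and the statement is vacuous, so the content is concentrated in the $A$-shape flip (for $\g\l(N|2\m)$) and the $D$-shape tail flip (for $\o\s\p(2\n|2\m)$, $\n\neq 1$, and $\o\s\p(2|2\m)$). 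For $\o\s\p(2|2\m)$ with its two grey tail roots $\delta_\m\pm\ve_1$ one must additionally check the flip of those two nodes genuinely lies in $\tilde W_0$, i.e. preserves the grading; this is immediate since both are odd.

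\medskip

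\noindent Next, for each nontrivial generator $w\in\tilde W_0$ I would exhibit its action on the weight basis $\{\ve_i\}\cup\{\delta_j\}$ explicitly and read off that it permutes $\Pi$, hence extends to a $\Z$-linear automorphism of $\La=\Span_\Z(\{\ve_i\}\cup\{\delta_j\})$. Concretely: the $A$-shape flip of the $\g\l(N|2\m)$ diagram, read through the node labels $\delta_i-\delta_{i+1}$, $\delta_\m-\ve_1$, $\ve_i-\ve_{i+1}$, $\ve_N-\delta_{\m+1}$, corresponds to $\ve_i\mapsto -\ve_{N+1-i}$, $\delta_j\mapsto -\delta_{2\m+1-j}$ up to the overall sign conventions, which manifestly sends $\Pi$ to $-\Pi^{\mathrm{op}}$, i.e. permutes $\Rm^+\cup\Rm^-$; being a signed permutation of an orthonormal-type basis it is an orthogonal, hence $\La$-preserving, bijection. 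The $D$-shape tail flip of $\o\s\p(2\n|2\m)$ swaps the nodes $\ve_{\n-1}-\ve_\n$ and $\ve_{\n-1}+\ve_\n$ and fixes all others, which is realized by $\ve_\n\mapsto-\ve_\n$ and the identity on the remaining $\ve_i,\delta_j$; this is visibly a lattice automorphism. Since the bilinear form is diagonal in this basis with entries $\pm1$, each such $w$ is orthogonal, so it preserves the length of every root; combined with the fact that it permutes $\Pi$ it permutes all of $\Rm=\Rm_0\cup\Rm_1$, and by the definition of $\tilde W_0$ it preserves the parity, i.e. $w(\Rm_i)=\Rm_i$. For the $\o\s\p(2|2\m)$ case the analogous computation with $\delta_\m\mapsto\delta_\m$, $\ve_1\mapsto-\ve_1$ finishes it.

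\medskip

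\noindent The only genuine obstacle is bookkeeping: confirming that the diagram automorphism really does correspond to the claimed signed permutation of $\{\ve_i\}\cup\{\delta_j\}$, given the sign choices in the labelling (e.g. that $\ve_N-\delta_{\m+1}$ is a simple root while its image under a naive flip would be $\delta_1-\delta_2$ up to sign), and that in the $D$-type case one uses the \emph{correct} generator of $\tilde W$ — the tail-node transposition rather than any larger exceptional automorphism, which does not arise here. Once the signed-permutation description is pinned down, preservation of $\Rm$ and $\La$ is a one-line consequence of orthogonality plus the parity condition built into the definition of $\tilde W_0$; no case-by-case verification of individual roots is needed beyond checking the simple roots, since a root-system automorphism is determined by its effect on a simple basis.
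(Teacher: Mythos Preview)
Your argument is sound for the group you actually treat, but you and the paper are not working with the same group. You read $\tilde W$ literally as the automorphism group of the Dynkin diagram $\tilde D$ --- hence at most of order two in each case --- and then dispatch the one nontrivial generator (the $A$-type flip for $\g\l(N|2\m)$, the tail swap for the $D$-shapes including $\o\s\p(2|2\m)$) by realising it as a signed permutation of $\{\ve_i\}\cup\{\delta_j\}$. Under that reading your proof is correct.

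The paper's proof, however, takes $\tilde W$ to be a much larger group: for general linear $\g$ it is described as generated by \emph{all} permutations of the $\zt_i$, and for ortho-symplectic $\g$ by all transpositions $\zt_i\leftrightarrow\zt_j$ together with the inversions $\si_i$ for $i,j\leqslant\frac{N}{2}$ --- in other words, the full Weyl group of the non-graded shape. With that reading the argument is a one-line appeal to the explicit root lists in Section~\ref{SecBasQSG}: every root is of the form $\zt_i-\zt_j$, $\pm\zt_i\pm\zt_j$, $\pm\zt_i$ or $\pm 2\zt_i$, and any (signed) permutation of the $\zt_i$ visibly permutes such expressions and preserves $\La=\Z\{\zt_i\}$.

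So there is a genuine discrepancy between the paper's stated definition of $\tilde W$ and the group used in its proof. The larger, Weyl-group reading is the one that matters downstream, since the Weyl operators $w_\g$ and $w_\l$ are signed permutations of exactly this type and are not, in general, diagram automorphisms. Your method --- identify the generator as a signed permutation of the $\zt_i$ and invoke the explicit description of $\Rm$ --- is precisely the paper's method; you have simply applied it to too small a set of elements. Extending your argument to arbitrary transpositions and inversions of the $\zt_i$ (which is no harder) recovers the paper's proof verbatim.
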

\begin{proof}
For general linear $\g$, the group $\tilde W$ is generated by all permutations of $\zt_i$. For ortho-symplectic  $\g$,
it is generated by transpositions $\zt_i\leftrightarrow \zt_j$ and
inversions $\si_i$ taking $\zt_i$ to $\zt_{i'}=-\zt_{i}$, $i,j\leqslant \frac{N}{2}$.
Now the statement follows from the explicit description of the root systems given in Section \ref{SecBasQSG}.
\end{proof}
\noindent
%It now follows that even Weyl operator $w_\g$  preserves the root system $\Rm$.
\begin{remark}
\label{Rem-Weyl-op}
\em
Thus defined $w_\g$ is analogous to the longest element of the Weyl group of the ordinary simple Lie algebras.
In the case of $\g=\s\o(2n)$ of odd $n$, the operator $w_\g$ differs from the longest element by the non-trivial automorphism of the Dynkin diagram
(the tail flip). This operator can be equally used for construction of  non-graded pseudo-symmetric pairs, because it flips the highest and lowest weights of
the minimal $\g$-module, cf. Lemma \ref{high-low}. It is easier to  define than the "honest" analog of the longest Weyl group element
for $\g=\s\p\o(2\n|2\m)$ with odd $\m$. This explains our choice of $w_\g$.
\end{remark}

Pick a subset $\Pi_\l\subset \Pi$, then put  $\bar \Pi_\l=\Pi \backslash \Pi_\l$, and generate a  subalgebra $\l=\langle e_\al,f_\al\rangle_{\al\in \Pi_\l}\subset \g$.
It is a direct sum of subalgebras, $\l=\sum_{i}\l_i$, corresponding to connected components of $\Pi_\l$.
If $\l_i$ is of type A, then set $\hat \l_i\subset \g$ to be the natural $\g\l$-extension of $\l_i$ and leave $\hat \l_i=\l_i$ otherwise.
Denote by $\hat \l=\op_i \hat\l_i\subset \g$ and by $\h_{\hat\l}^*$ its Cartan subalgebra.
The restriction of the canonical inner product from $\h^*$ to $\h^*_{\hat \l}$ is non-degenerate.

For the $i$-th connected component of $\Pi_\l$ denote $w_{\l_i}=w_{\hat \l_i}$ and define $w_\l=\prod_i w_{\l_i}\in \End(\h^*_\l$),
the Weyl operator  of the subalgebra $\hat \l$.

\begin{definition}
\label{admissible}
  We call $\Pi_\l$ admissible if the grading on $\g$ induces a minimal symmetric grading on each connected component of $\hat \l$.
\end{definition}
\noindent
Here is an easy consequence of this definition.
\begin{propn}
\label{odd-Levi-iso}
  An admissible set $\Pi_\l$ cannot contain an isolated grey odd root.
\end{propn}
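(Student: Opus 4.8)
The plan is to argue by contradiction: an isolated grey odd root $\al\in\Pi_\l$ produces a connected component of $\hat\l$ that admits no symmetric grading at all, which is incompatible with Definition \ref{admissible}.

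So assume $\Pi_\l$ is admissible and contains a grey odd root $\al$ that is isolated inside the subdiagram $\Pi_\l$, i.e. $\al$ forms a connected component $\l_i=\langle e_\al,f_\al\rangle$ of $\Pi_\l$ on its own. Being a single node, $\l_i$ is of type $A_1$, hence of type $\mathrm{A}$, so by the construction preceding Definition \ref{admissible} the component $\hat\l_i$ is taken to be its natural $\g\l$-extension. Since $\al$ is odd and isotropic ($(\al,\al)=0$), one has $\l_i\cong\s\l(1|1)$ and $\hat\l_i\cong\g\l(1|1)$; in particular the natural $\hat\l_i$-module is two-dimensional, spanned by the two weight vectors $v_k,v_{k+1}$ of $\C^N$ joined by $\al$. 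The one point that needs care is the observation that a grey, i.e. odd isotropic, simple root always joins two weight vectors of opposite parity: writing $\al$ up to sign as $\zt_k-\zt_{k+1}$ and using the form $(\ve_i,\ve_j)=\dt^j_i=-(\dt_i,\dt_j)$, $(\ve_i,\dt_j)=0$ of Section \ref{SecBasQSG} (so that the cross term drops out by orthogonality of distinct basis vectors), one gets $(\al,\al)=(\zt_k,\zt_k)+(\zt_{k+1},\zt_{k+1})$, which vanishes precisely when one of $v_k,v_{k+1}$ is even and the other odd.

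It then remains to note that the Weyl operator $w_{\hat\l_i}$ of this two-dimensional component is exactly the transposition $v_k\leftrightarrow v_{k+1}$, so a symmetric grading on $\hat\l_i$ in the sense of Definition \ref{Weyl-operator} would force $|v_k|=|v_{k+1}|$, contradicting the previous paragraph. Hence $\g$ induces on $\hat\l_i$ no symmetric grading, a fortiori not a minimal symmetric one, so $\Pi_\l$ is not admissible --- the desired contradiction. Equivalently, in terms of shapes: a minimal symmetric grading of a type-$\mathrm{A}$ component $\g\l(p|q)$ needs $q$ even, which fails for the $\g\l(1|1)$ coming from an isolated grey node. I do not anticipate any genuine obstacle beyond making the opposite-parity observation precise, which is immediate from the explicit root data of Section \ref{SecBasQSG}.
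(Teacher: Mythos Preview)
Your argument is correct and follows exactly the same route as the paper's proof: an isolated grey odd node gives $\hat\l_i\cong\g\l(1|1)$, which carries no symmetric grading, so the Weyl operator $w_{\l_i}$ is not even and admissibility fails. You have simply unpacked the one-line proof by verifying explicitly that the two weights joined by an isotropic odd simple root have opposite parity and that the Weyl operator is their transposition.
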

\begin{proof}
  Indeed, if $\al\in \Pi_\l$ is such a root, then the corresponding subalgebra  $\hat \l_k$ is isomorphic to  $\g\l(1|1)$. It has no symmetric grading, and the operator $w_{\l_k}$ is not even.
\end{proof}
\noindent
Note that an isolated black odd root is possible: one can take  $\g=\o\s\p(1|2\m)$ and $\Pi_\l=\{\al_\m\}$.

If $\Pi_\l$ is admissible, then the  operator  $w_\l$ is even.
    \begin{lemma}
  For admissible $\Pi_\l$, the operator $w_\l\in \End(\h^*_{\hat \l})$ extends to an involutive operator on $\h^*$ that is even on the weight lattices  of $\l$ and $\g$.
\end{lemma}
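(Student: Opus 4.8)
The plan is to realise the extension as the identity on the orthogonal complement of $\h^*_{\hat\l}$ and then to verify the three assertions --- well-definedness, involutivity, and evenness on both weight lattices --- by reducing everything to the way the block reversals $w_{\hat\l_i}$ act on the weight basis $\{\zt_1,\dots,\zt_N\}$ of the natural $\g$-module. Since the canonical form restricts non-degenerately to $\h^*_{\hat\l}$, there is an orthogonal splitting $\h^*=\h^*_{\hat\l}\oplus(\h^*_{\hat\l})^\perp$, and I would define the extended map, still written $w_\l$, to act by $\prod_i w_{\hat\l_i}$ on $\h^*_{\hat\l}$ and by $\id$ on $(\h^*_{\hat\l})^\perp$. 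The connected components of $\Pi_\l$ being disjoint, the $\h^*_{\hat\l_i}$ are pairwise orthogonal and $w_{\hat\l_i}$ fixes $\h^*_{\hat\l_j}$ for $j\neq i$, so the $w_{\hat\l_i}$ commute. By Definition \ref{Weyl-operator} applied to $\hat\l_i$, each $w_{\hat\l_i}$ is the reversal of the ordered weight basis of the natural $\hat\l_i$-module, hence $w_{\hat\l_i}^2=\id$; therefore $w_\l^2=\id$ on $\h^*_{\hat\l}$, and trivially on the complement, so $w_\l$ is a well-defined involution of $\h^*$. It is moreover orthogonal, since admissibility makes each reversal an isometry of $\h^*_{\hat\l_i}$ --- that is exactly the symmetric-grading condition.

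\emph{Evenness on the weight lattice of $\l$.} By Definitions \ref{Weyl-operator} and \ref{admissible}, admissibility says precisely that the grading induced on the natural module of each $\hat\l_i$ is symmetric, i.e. that $w_{\hat\l_i}$ is even and so stabilises the even and the odd roots of $\hat\l_i$. For a type-$A$ component, $\l_i$ and $\hat\l_i$ have the same root system (they differ only in the Cartan subalgebra), so $w_{\hat\l_i}$ also preserves $\Rm_{\l_i}$ with its parity decomposition. Multiplying over $i$ and using that $w_\l$ is the identity elsewhere, $w_\l$ stabilises the weight lattice of $\l$ and carries even roots of $\l$ to even roots, odd to odd.

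\emph{Evenness on the weight lattice $\La$ of $\g$.} This is the substantive point. Each $w_{\hat\l_i}$ reverses a contiguous block $B_i$ of the ordered weight basis $\{\zt_j\}$, possibly with sign changes in the orthogonal cases whose Dynkin diagram is branched; hence $w_{\hat\l_i}$, and therefore $w_\l$, is a signed permutation of $\{\zt_j\}$ --- a genuine permutation in the general linear case --- so that $w_\l(\La)=\La$ for $\La=\langle\zt_j\rangle$. Admissibility makes the parity sequence along each block palindromic, so $w_\l$ preserves the parity of every $\zt_j$. It remains to show $w_\l(\Rm)=\Rm$. For general linear $\g$ this is immediate, since $\Rm=\{\zt_i-\zt_j\}$ is invariant under every permutation of the $\zt$'s and the parity $|i|+|j|$ of a root is preserved by the previous sentence. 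For the ortho-symplectic families every $\ve$-weight is even and every $\dt$-weight is odd, so preservation of parities forces $w_\l$ to preserve the $\ve/\dt$-type of each weight; consequently $w_\l$ permutes $\{\pm\ve_i\}$ and $\{\pm\dt_j\}$ separately (consistently: no admissible component can straddle the odd simple root $\dt_\m-\ve_1$, which would produce a monotone and hence non-palindromic parity pattern, and a one-node odd component is excluded by Proposition \ref{odd-Levi-iso}). Reading off the root lists of Section \ref{SecBasQSG}, a signed permutation of the $\ve$'s and of the $\dt$'s separately then carries every root type ($\pm\ve_i\pm\ve_j$, $\pm\dt_i\pm\dt_j$, $\pm\dt_i\pm\ve_j$, $2\dt_i$ or $2\ve_i$, $\ve_i$) to a root of the same type, with parity preserved; hence $w_\l(\Rm)=\Rm$ and $w_\l$ is even on $\La$.

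The step I expect to be the main obstacle is precisely this last one --- showing that the extended $w_\l$ stabilises the full root system $\Rm$ of $\g$ --- since it requires pinning down which weight blocks can occur for admissible $\Pi_\l$ and tracking the short and isotropic root types through a signed permutation; by contrast the extension, the involutivity, the orthogonality, and evenness for $\l$ are essentially formal once the definition of admissibility is unwound.
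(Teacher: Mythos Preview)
Your approach is the same as the paper's: extend $w_\l$ by the identity on $(\h^*_{\hat\l})^\perp$, observe that it acts as a (signed) permutation on the weight basis $\{\zt_j\}$ of the natural module, and conclude that it is involutive, preserves $\La$, and is even because admissibility makes the parity sequence along each block palindromic. Where the paper simply writes ``direct examination shows $w_\l(\Rm)=\Rm$'', you supply the case analysis, which is fine and indeed more informative.

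One inaccuracy: your parenthetical ``no admissible component can straddle the odd simple root $\dt_\m-\ve_1$'' is false as a universal statement. A connected component of $\Pi_\l$ that contains the \emph{tail} of the ortho-symplectic diagram may very well include $\al_\m$ --- e.g.\ $\Pi_\l=\{\al_{m+1},\dots,\al_n\}$ with $m<\m$; the induced grading on its natural $\hat\l_i$-module is then the minimal symmetric one for an ortho-symplectic $\hat\l_i$, so it \emph{is} admissible. Your monotonicity argument only rules out straddling for type-$A$ components, which is all you actually need: for a tail (ortho-symplectic) component one has $w_{\l_i}(\zt_j)=\zt_{j'}=-\zt_j$, which trivially preserves the $\ve/\dt$ type and parity. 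So the conclusion that $w_\l$ permutes $\{\pm\ve_i\}$ and $\{\pm\dt_j\}$ separately stands; just restrict the straddling claim to type-$A$ components and handle the tail component by the negation observation you already made.
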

\begin{proof}
The set of weights $\La_\g(V)=\{\zt_i\}$ of the basic module splits into a disjoint union $\La_{\hat \l}(V)\cup \overline{\La_{\hat \l}(V)}$.
The operator  $w_\l$ preserves the weight lattice of $\hat\l$ and its root system by construction.
It acts as a permutation on $\La_\g(V)$ identical on $\overline{\La_{\hat \l}(V)}$, therefore it preserves the weight lattice of $\g$.
Direct  examination shows $w_\l(\Rm)=\Rm$ for the root system of $\g$. Furthermore, it preserves the parity of all weights from  $\La(V)$, therefore it is even.
Since the inner product on $\h^*_{\hat \l}$ is non-degenerate, we have orthogonal decomposition $\h^*=(\h_{\hat\l}^*)^\perp\op \h^*_{\hat\l}$,
where $\h^*_{\hat\l}$ is spanned by $\La_{\hat \l}(V)$ whereas  $(\h_{\hat\l}^*)^\perp$ by $\overline{\La_{\hat \l}(V)}$.
Finally, $w_\l$ extends as the identity operator on $(\h_{\hat\l}^*)^\perp$, therefore it is orthogonal and involutive.
\end{proof}

The vector subspaces
$$\mathfrak{m}_+=\sum_{\al\in \Rm^+_{\g}\>- \>\Rm^+_{\l}} \g_\al,
\quad
\mathfrak{m}_-=\sum_{\al\in \Rm^+_{\g}\>- \>\Rm^+_{\l}} \g_{-\al},
$$
are graded $\l$-modules.
For $\al\in \bar \Pi_\l$ let $V^\pm_\al\subset \g_\pm$ denote the $\l$-submodule generated by $e_{\pm \al}\in \g_{\pm \al}$.
\begin{lemma}
\label{high-low}
Suppose that $\g$ is a basic matrix Lie superalgebra and $\Pi_\l\subset \Pi$ is admissible.
 Then the operator $w_\l$ flips  the highest and lowest weights of $V^\pm_\al$ for each $\al \in \bar \Pi_\l$.
\end{lemma}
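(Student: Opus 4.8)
The plan is to identify $\al$ as the lowest weight of $V^+_\al$, then to show that $w_\l$ permutes the weight set $\wt(V^+_\al)$ while reversing the dominance order on it, and to conclude that $w_\l$ must interchange the (necessarily unique) lowest and highest weights. The case of $V^-_\al$ is the same computation with every weight negated, using the Chevalley-type automorphism $e_\beta\leftrightarrow f_\beta$ of $\g$ together with the $\Z$-linearity of $w_\l$, so I treat only $V^+_\al$.

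First I would observe that $[e_{-\beta},e_\al]\in\g_{\al-\beta}=0$ for every $\beta\in\Pi_\l$, since the difference of two distinct simple roots is never a root. Hence $e_\al$ is annihilated by all lowering generators $f_\beta=e_{-\beta}$ of $\l$; it is a lowest-weight vector, and $V^+_\al=U(\l)\hspace{1pt}e_\al$ is spanned by the elements $(\ad e_{\beta_1})\cdots(\ad e_{\beta_r})\hspace{1pt}e_\al$ with $\beta_j\in\Pi_\l$. Consequently $\wt(V^+_\al)\subset\al+\Z_{\geq0}\Pi_\l$, the $\al$-weight space is one-dimensional, and $\al$ is the minimum of $\wt(V^+_\al)$ for the order $\mu\leq\nu\iff\nu-\mu\in\Z_{\geq0}\Pi_\l$.

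Next, $w_\l$ reverses this order: by construction $w_\l=\prod_i w_{\hat\l_i}$, and each $w_{\hat\l_i}$ reverses the corresponding component and hence maps $\Rm^+_{\hat\l_i}$ onto $\Rm^-_{\hat\l_i}$; together with the equality of submonoids $\Z_{\geq0}\Pi_\l=\Z_{\geq0}\Rm^+_{\hat\l}$ this gives $w_\l(\Z_{\geq0}\Pi_\l)=-\Z_{\geq0}\Pi_\l$, so $\mu\leq\nu$ forces $w_\l\nu\leq w_\l\mu$. The heart of the argument is then the claim $w_\l\bigl(\wt(V^+_\al)\bigr)=\wt(V^+_\al)$. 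For this I would pass to the realization $\g\subset\g\l(V)$, $V=\C^{N|2\m}$: restricting $V$ to $\hat\l=\bigoplus_i\hat\l_i$ decomposes the weight index set of $V$ into the blocks occupied by the natural $\hat\l_i$-modules (and their form-duals in the ortho-symplectic case) together with a complementary part on which $\hat\l$ acts trivially, and $w_\l$ reverses each such block and fixes the complement — admissibility of $\Pi_\l$ being exactly what makes every one of these reversals parity-preserving, so that $w_\l$ is the even involution of the preceding lemma. Expressing $e_\al$ through matrix units — symmetrised with respect to the form in the ortho-symplectic cases — and using that $\al$ is a simple root lying immediately outside $\Pi_\l$, one checks that the basis vectors entering $e_\al$ are extremal within the blocks on which the adjacent components act; so, as an $\hat\l$-module, $V^+_\al$ is an (outer) tensor product of full natural $\hat\l_i$-modules, their duals, and one-dimensional pieces, and $\wt(V^+_\al)$ is the set of all differences $\zt_a-\zt_b$ with $a,b$ running over index sets each of which is such a block or a singleton, hence each reversed or fixed by $w_\l$. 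This proves the claim. Granting it, $w_\l$ is an involution permuting the finite set $\wt(V^+_\al)$ and reversing its order, so it carries the unique minimum $\al$ to a maximal element of $\wt(V^+_\al)$; this maximal element is therefore unique — it is the highest weight $\Lambda$ — and $w_\l$ sends $\Lambda$ back to $\al$. This is the asserted flip.

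The step I expect to be the real obstacle is this $w_\l$-invariance of $\wt(V^+_\al)$, and in particular its uniform treatment across all ortho-symplectic configurations. When $\bar\Pi_\l$ contains an odd root, or when some $\hat\l_i$ is of $D$-type with an odd number of $\ve$-coordinates, the relevant block-reversal $w_{\hat\l_i}$ need not belong to the Weyl group of the even part of $\hat\l_i$, so one cannot merely invoke Weyl-group invariance of weight sets; the explicit realization inside $\g\l(V)$ is what makes the argument go through, and in practice it amounts to a finite check over the admissible pairs $(\Pi_\l,\al)$ read off from the root-system data of Section~\ref{SecBasQSG}.
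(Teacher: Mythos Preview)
Your approach is essentially the paper's: both arguments reduce to identifying the $\l$-module structure of $V^\pm_\al$ in terms of natural modules of the components $\hat\l_i$ and then observing that $w_\l$, which reverses each block of weight indices, swaps the extremal weights. Your order-theoretic packaging (unique lowest weight $\al$, $w_\l$ reverses the dominance order on $\Z\Pi_\l$, weight set is $w_\l$-stable, hence the unique maximum is $w_\l(\al)$) is a clean way to phrase the conclusion, but the substantive step --- stability of $\wt(V^+_\al)$ under $w_\l$ via the block description --- is the same case analysis the paper carries out.

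One point to sharpen: your intermediate claim that $V^+_\al$ is always an outer tensor product of natural $\hat\l_i$-modules (or duals, or trivial pieces), with $\wt(V^+_\al)=\{\zt_a-\zt_b:a\in I_1,\,b\in I_2\}$, is not quite right, and the exceptions you flag (odd $\al$, or $\hat\l_i$ of $D$-type with an odd number of $\ve$-coordinates) are not the ones that break it. The genuine exception, which the paper isolates explicitly, occurs for $\g=\o\s\p(2\n|2\m)$ when $\al$ is one of the two tail roots $\al_{n-1},\al_n$ and the adjacent component $\l_i$ (of type~$A$, rank~$\geq 3$) contains the \emph{other} tail root: then $V^+_\al$ has weight set $\{\ve_j+\ve_k:j<k\}$ over the block, i.e.\ it is $\wedge^2$ of the natural $\l_i$-module rather than a product of natural modules. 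The paper also singles out the three-node tail $\Pi_{\l_i}=\{\al_{n-2},\al_{n-1},\al_n\}$, where the module in question is the six-dimensional vector representation. Your conclusion survives in both cases --- the set $\{\ve_j+\ve_k:j<k\}$ is visibly stable under the block-reversal $w_\l$, and the flip of extremes is immediate --- but the tensor-product description and the list of exceptional configurations should be adjusted to match what actually happens at the $D$-tail.
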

\begin{proof}
The subalgebra $\l$ is a sum of $\l=\sum_{i} \l_i$ of basic Lie superalgebras $\l_i$ corresponding to
connected components of the Dynkin diagram of $\l$.
In all cases excepting $\g=\o\s\p(2\n|2\m)$, the modules $V^\pm_\al$ are tensor products of smallest fundamental $\l_i$-modules,
for which $w_\l$ does the job.

We are left to consider the situation of $\g=\o\s\p(2\n|2\m)$.
The module $V^\pm_\al$ which is not minimal for $\l_i\subset \l$  occurs in the following   two cases.
If $\Pi_{\l_i}=\{\al_{\n-2},\al_{\n-1},\al_{\n}\}$, then the non-trivial $\l_i$-submodule in $\mathfrak{m}_\pm $  is $\C^6$, for which the statement is obvious.
Another possibility is when $\al_{\n-1}\in \Pi_{\l_i}$, $\al_{\n}\in \bar \Pi_\l$
(or the other way around) and $\rk \>\l_i\geqslant 3$.
Now notice that $w_{\l_i}$ is even if and only if the tail roots and $\l_i$ are even (mind that we work with the minimal symmetric grading on $\g$), in which case $w_{\l_i}$
is  just the longest element
of the Weyl group of $\l_i$. Then the statement is true either.
 \end{proof}

From now on we will consider only admissible $\Pi_\l$. Suppose that  $\tau \in \Aut(\Pi)$  is an  even permutation that coincides with $-w_\l$ on $\Pi_\l$.
Set $\tilde \al =w_\l\circ \tau(\al)\in \Rm^+$ for all $\al\in \bar \Pi_\l$.
\begin{definition}
\label{triple}
  The triple $(\g,\l, \tau)$ is called pseudo-symmetric if
\be
(\mu+\tilde\mu,\al)&=0,& \quad \forall \mu\in \bar \Pi_\l, \quad \al\in \Pi_{\l},
\label{1nd-cond}\\
(\mu+\tilde\mu,\nu-\tilde \nu)&=0,& \quad \forall \mu,\nu\in \bar \Pi_\l.
\label{2nd-cond}
\ee
\end{definition}
Condition  (\ref{1nd-cond}) has the following algebraic interpretation.
\begin{lemma}
\label{tau-commutes w}
As a $\Z$-linear endomorphism of the root lattice,  $\tau$ commutes with $w_\l$.
\end{lemma}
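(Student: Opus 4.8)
The plan is to verify directly that $\tau$ and $w_\l$ agree on a spanning set of the root lattice, using that both are $\Z$-linear and that condition (\ref{1nd-cond}) controls how $\tilde\mu=w_\l\circ\tau(\mu)$ interacts with $\Pi_\l$. Recall the root lattice is spanned by $\Pi=\Pi_\l\sqcup\bar\Pi_\l$, so it suffices to check $\tau\circ w_\l=w_\l\circ\tau$ on each simple root, or equivalently $w_\l^{-1}\circ\tau\circ w_\l=\tau$ on $\Pi$; since $w_\l$ is involutive this is the same as $w_\l\circ\tau\circ w_\l=\tau$.

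First I would dispose of the roots in $\Pi_\l$. For $\al\in\Pi_\l$ we have $\tau(\al)=-w_\l(\al)$ by hypothesis, and $w_\l$ preserves $\h^*_{\hat\l}$ (it is the Weyl operator of $\hat\l$), so $w_\l(\al)$ again lies in the span of $\Pi_\l$; applying $\tau=-w_\l$ there and using $w_\l^2=\id$ gives $\tau\circ w_\l(\al)=-w_\l\circ w_\l(\al)=-\al=w_\l\circ\tau(\al)$, where the last equality is again $\tau=-w_\l$ on $\Pi_\l$ composed with involutivity. So $\tau$ and $w_\l$ commute on the sublattice generated by $\Pi_\l$.

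Next, for $\mu\in\bar\Pi_\l$, I want to show $w_\l\circ\tau\circ w_\l(\mu)=\tau(\mu)$. Since $w_\l$ acts as the identity on $(\h^*_{\hat\l})^\perp$ and orthogonally on all of $\h^*$, and $\tau$ is an even permutation of $\Pi$, both $\tau(\mu)$ and $\tilde\mu=w_\l\circ\tau(\mu)$ are positive roots; write $\tau(\mu)=\mu_\perp+\mu_\l$ according to the orthogonal decomposition $\h^*=(\h^*_{\hat\l})^\perp\oplus\h^*_{\hat\l}$, so that $\tilde\mu=\mu_\perp+w_\l(\mu_\l)$, hence $\tau(\mu)+\tilde\mu=2\mu_\perp+(\mu_\l+w_\l(\mu_\l))$. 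Condition (\ref{1nd-cond}) says $(\mu+\tilde\mu,\al)=0$ for all $\al\in\Pi_\l$; I would massage this — using that $\mu_\perp\perp\h^*_{\hat\l}$ — into the statement that $\mu_\l+w_\l(\mu_\l)$ is orthogonal to $\h^*_{\hat\l}$, hence zero, i.e. $w_\l(\mu_\l)=-\mu_\l$. (Here one must be slightly careful: (\ref{1nd-cond}) is stated with $\mu\in\bar\Pi_\l$, and $\tilde\mu$ is by definition $w_\l\tau(\mu)$; I would unwind the definitions so the relevant pairing really is $(\tau(\mu)+w_\l\tau(\mu),\,\h^*_{\hat\l})=0$, possibly after replacing $\mu$ by the unique element of $\bar\Pi_\l$ with the appropriate $\tau$-image, since $\tau$ permutes $\Pi$.) Granting $w_\l(\mu_\l)=-\mu_\l$, we get $w_\l(\tau(\mu))=\mu_\perp-\mu_\l$, and applying $w_\l$ once more: $w_\l^2(\tau(\mu))=\tau(\mu)$ automatically — that is trivial — so instead I compute $w_\l\circ\tau\circ w_\l(\mu)$ by first noting $w_\l(\mu)$ and then using that $\tau$ is determined on $\Pi$; the cleanest route is to show $w_\l\circ\tau$ and $\tau\circ w_\l$ agree on $\mu$ by checking they have the same component in each summand, the $(\h^*_{\hat\l})^\perp$-components matching because $w_\l$ is the identity there and $\tau$ is a lattice automorphism commuting with the orthogonal projection (as it preserves $\h^*_{\hat\l}$ setwise, which follows since $\tau$ stabilizes $\Pi_\l$ — indeed $\tau=-w_\l$ on $\Pi_\l$ and $w_\l$ stabilizes $\Pi_\l$), and the $\h^*_{\hat\l}$-components matching by the relation $w_\l(\mu_\l)=-\mu_\l$ just derived together with $\tau|_{\Pi_\l}=-w_\l$.

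The main obstacle I anticipate is purely bookkeeping: keeping straight the three maps $\tau$, $w_\l$, $w_\g$ and the orthogonal splitting $\h^*=(\h^*_{\hat\l})^\perp\oplus\h^*_{\hat\l}$, and in particular verifying that $\tau$ genuinely preserves this splitting so that "commute on $\h^*_{\hat\l}$-parts and on $(\h^*_{\hat\l})^\perp$-parts" suffices. Once that structural point is nailed down — $\tau(\Pi_\l)=\Pi_\l$ hence $\tau(\h^*_{\hat\l})=\h^*_{\hat\l}$ hence $\tau$ commutes with both orthogonal projections — the identity $w_\l\circ\tau=\tau\circ w_\l$ reduces to the two elementary computations above, one on each summand, with condition (\ref{1nd-cond}) supplying exactly the fact $w_\l(\mu_\l)=-\mu_\l$ needed on the $\h^*_{\hat\l}$-side. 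No deep input is required beyond Definition \ref{triple} and the properties of $w_\l$ established in the preceding lemmas.
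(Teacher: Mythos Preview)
Your approach via the orthogonal splitting is close in spirit to the paper's, but there is a genuine gap in the step you flag as ``purely bookkeeping''. You argue that $\tau(\Pi_\l)=\Pi_\l$ implies $\tau(\h^*_{\hat\l})=\h^*_{\hat\l}$, and hence that $\tau$ commutes with the projections onto $\h^*_{\hat\l}$ and its orthogonal complement. But $\Pi_\l$ spans only $\h^*_\l$, which is strictly smaller than $\h^*_{\hat\l}$ whenever $\l$ has a type-$A$ component (the $\g\l$-extension adds trace directions not lying in the span of the roots). Since $\tau$ is given only as a $\Z$-linear map on the root lattice, it is not a priori clear how it acts on these extra directions, so the claimed invariance of $\h^*_{\hat\l}$ is unjustified. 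Falling back to $\h^*_\l$ instead does not help: in the super setting the restricted form on $\h^*_\l$ can be degenerate (e.g.\ an $\s\l(2|2)$ component with symmetric grading, which is admissible, has singular Gram matrix), so there is no clean orthogonal complement to work with.

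The paper sidesteps this entirely by never invoking an orthogonal projection. For $\mu\in\bar\Pi_\l$ it writes $w_\l(\mu)=\mu+\eta$ with $\eta\in\Z_+\Pi_\l$ (an honest root-lattice element, well-defined regardless of any degeneracy), notes $w_\l(\eta)=-\eta$ from $w_\l^2=\id$, and then uses condition (\ref{1nd-cond}) to argue that the \emph{same} shift governs $\tau(\mu)$, i.e.\ $w_\l(\tau(\mu))=\tau(\mu)+\eta$. From there the computation $\tau(w_\l(\mu))=\tau(\mu)+\tau(\eta)=\tau(\mu)-w_\l(\eta)=\tau(\mu)+\eta=w_\l(\tau(\mu))$ is immediate, using only that $\tau=-w_\l$ on $\h^*_\l$. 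Your argument on $\Pi_\l$ is correct and matches the paper; on $\bar\Pi_\l$ you should replace the projection machinery with this $\eta$-shift computation, which stays inside the root lattice and avoids the issue above.
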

\begin{proof}
By construction, $\tau$ and $w_\l$ commute when restricted to $\h^*_\l$.
It suffices to check that for simple roots from $\bar \Pi_\l$.

By Lemma \ref{high-low},  $w_\l$ takes the highest weight of an irreducible $\l$-module $V^\pm_\mu$, $\mu\in \bar \Pi_\l$, to the lowest weight and vice versa.
For each $\mu\in \bar \Pi_\l$ we have $w_\l(\mu)=\mu+\eta$ for some weight $\eta\in \Z_+\Pi_\l$ that satisfies
 $w_\l(\eta)=-\eta$ because $w_\l^2=\id$. The weight $\eta$ depends only on the projection of $\tau(\mu)$ to $\h^*_\l$,
therefore
$- \tau(\mu)=w_\l(-\tilde \mu)=-w_\l\bigl(\tau(\mu)\bigr)+\eta$ or
$\tau(\mu)+\eta=w_\l\bigl(\tau(\mu)\bigr)$
due to the condition (\ref{1nd-cond}).
Then, since $\tau(\eta)=-w_\l(\eta)$ for all $\eta\in \h^*_\l$,
$$
\tau\bigl(w_\l(\mu)\bigr)=\tau(\mu+\eta)=\tau(\mu)+\tau(\eta)=\tau(\mu)-w_\l(\eta)=\tau(\mu)+\eta=w_\l\bigl(\tau(\mu)\bigr),
$$
as required.
\end{proof}
Define a  linear map $\theta=-w_\l\circ \tau\colon \h^*\to \h^*$. It preserves $\Rm$ because so do $\tau$ and $w_\l$. Furthermore,
$\theta(\al)=-\tilde \al$ for $\al \in \bar \Pi_\l$
and $\theta(\al)=\al$ for $\al \in \Pi_\l$. As a product of two even operators on $\La$, it is even. The system of equalities  (\ref{1nd-cond}) and (\ref{2nd-cond})
is  equivalent to
\be
\label{gen-cond}
\bigl(\al+\theta(\al),\bt-\theta(\bt)\bigr)=0,\quad \forall \al,\bt\in \Pi.
\ee

\begin{propn}
\label{tau-invol-orth}
Condition (\ref{gen-cond})
is  fulfilled if and only if the permutation $\tau$ is  even and extends to an involutive orthogonal operator on $\h^*$
coinciding with $-w_\l$ on $\Pi_\l$.
\end{propn}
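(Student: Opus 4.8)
The plan is to carry everything through the operator $\theta=-w_\l\circ\tau$ introduced just above, for which $(\ref{gen-cond})$ reads $(\al+\theta(\al),\bt-\theta(\bt))=0$ for all $\al,\bt\in\Pi$, and then to transfer properties back and forth between $\theta$ and $\tau$. Recall from the construction above that $w_\l$ is an even, involutive, orthogonal operator on $\h^*$, equal to the identity on $(\h_{\hat\l}^*)^\perp$ and restricting to an operator of longest-Weyl-element type on $\h_{\hat\l}^*$; since $w_\l^2=\id$, the assignment $\tau\mapsto\theta=-w_\l\tau$ is a bijection, with inverse $\theta\mapsto-w_\l\theta$, which preserves orthogonality and evenness because $-\id$ and $w_\l$ do. Moreover $\theta$ fixes $\h_\l^*:=\Span\Pi_\l$ pointwise --- for $\al\in\Pi_\l$, $\theta(\al)=-w_\l(-w_\l(\al))=\al$ --- and $\tau$ preserves $\Pi_\l$, hence $\h_\l^*$, since $-w_\l$ acts as a diagram automorphism of $\l$.

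For the ``only if'' part, assume $(\ref{gen-cond})$. First I would antisymmetrize: write $(\ref{gen-cond})$ for $(\al,\bt)$ and for $(\bt,\al)$, expand by bilinearity, and subtract; the symmetry of $(-,-)$ kills the terms $(\al,\bt)$ and $(\theta\al,\theta\bt)$ and leaves $(\theta\al,\bt)=(\al,\theta\bt)$ for all $\al,\bt\in\Pi$, i.e. $\theta$ is self-adjoint on $\h^*$ (the form being non-degenerate). Feeding this back into $(\ref{gen-cond})$ cancels the cross terms and gives $(\theta\al,\theta\bt)=(\al,\bt)$, so $\theta$ is orthogonal. An orthogonal self-adjoint operator satisfies $\theta^*=\theta^{-1}=\theta$, hence $\theta^2=\id$. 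Finally $(\ref{gen-cond})$ contains $(\ref{1nd-cond})$, so Lemma \ref{tau-commutes w} gives $\tau w_\l=w_\l\tau$, whence $\theta^2=w_\l\tau w_\l\tau=w_\l^2\tau^2=\tau^2$, so $\tau^2=\id$; and $\tau=-w_\l\theta$ is orthogonal. Together with the standing assumptions that $\tau$ is an even permutation of $\Pi$ with $\tau|_{\Pi_\l}=-w_\l$, this is the right-hand side.

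For the ``if'' part, assume $\tau$ is even, equals $-w_\l$ on $\Pi_\l$, and extends to an involutive orthogonal operator on $\h^*$. Then $\theta=-w_\l\tau$ is orthogonal. To see it is involutive, I would first record that $w_\l$ is the identity on $(\h_\l^*)^\perp$: it is the identity on $(\h_{\hat\l}^*)^\perp$ and it fixes the remaining ``$\g\l$-trace'' directions of $\hat\l$, all of which lie in $(\h_\l^*)^\perp$. Since $\tau$, being orthogonal, preserves $\h_\l^*$ and hence $(\h_\l^*)^\perp$, and since $w_\l=\id$ on $(\h_\l^*)^\perp$ while $w_\l\tau=-\id=\tau w_\l$ on $\h_\l^*$, the operators $w_\l$ and $\tau$ commute on $\h^*=\h_\l^*\oplus(\h_\l^*)^\perp$; therefore $\theta^2=w_\l^2\tau^2=\id$. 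An orthogonal involution is self-adjoint, so $(\al+\theta\al,\bt-\theta\bt)=(\al,\bt)-(\al,\theta\bt)+(\theta\al,\bt)-(\theta\al,\theta\bt)=0$ for all $\al,\bt$, which is $(\ref{gen-cond})$.

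The genuinely delicate point is bookkeeping rather than algebra: in the ``only if'' direction one must ensure that ``self-adjoint on $\Span\Pi$'' upgrades to ``self-adjoint on $\h^*$'' --- a remark is needed in the general linear case, where $\Span\Pi\subsetneq\h^*$, using the explicit action of $w_\l$ and $\tau$ on the weight lattice --- and in the ``if'' direction one must confirm that passing from $\l$ to its general-linear completion $\hat\l$ contributes only $w_\l$-fixed directions orthogonal to $\Pi_\l$. Both follow from the explicit root data of Section \ref{SecBasQSG} and the construction of $w_\l$; everything else is the short bilinear-form computation above.
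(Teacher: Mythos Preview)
Your proof is correct and follows the same overall strategy as the paper: work with $\theta=-w_\l\tau$, show (\ref{gen-cond}) is equivalent to $\theta$ being orthogonal and self-adjoint (hence involutive), and transfer back to $\tau$ via commutativity of $w_\l$ and $\tau$. In the ``only if'' direction you derive self-adjointness first by antisymmetrizing and then orthogonality, whereas the paper sets $\al=\bt$ to get the isometry first and then reads off self-adjointness; these are interchangeable.

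The one genuine difference is in the ``if'' direction. The paper does not argue commutativity of $w_\l$ and $\tau$ directly on the decomposition $\h^*=\h_\l^*\oplus(\h_\l^*)^\perp$; instead it first recovers condition (\ref{1nd-cond}) from the assumed orthogonality of $\tau$ via the chain $(\al,\mu)=(\tau\al,\tau\mu)=-(w_\l\al,\tau\mu)=-(\al,\tilde\mu)$ for $\al\in\Pi_\l$, $\mu\in\bar\Pi_\l$, and then invokes Lemma \ref{tau-commutes w}. This avoids your side verification that $w_\l$ fixes the $\g\l$-trace directions of $\hat\l$ (which is true, but is an extra check). Conversely, your decomposition argument is more self-contained and does not lean on Lemma \ref{tau-commutes w} in this direction. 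Either route is fine; your caveat about $\Span\Pi\subsetneq\h^*$ in the general linear case is a fair observation that the paper addresses only implicitly by declaring (\ref{gen-cond}) bilinear.
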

\begin{proof}
First of all remark that a linear operator being orthogonal and involutive is the same as symmetric and involutive,
or orthogonal and symmetric simultaneously. Also, an automorphism of the Dynkin diagram is exactly an orthogonal
operator on $\h^*$ that permutes the basis $\Pi$.

Condition (\ref{gen-cond}) is bilinear and therefore holds true for any pair of vectors from $\h^*$.
Setting $\al=\bt$ in  (\ref{gen-cond}) we find that $\theta$ is an isometry.
Then
 (\ref{gen-cond})
 translates to
$$
\bigl(\theta(\al), \bt\bigr)=\bigl(\al,\theta(\bt)\bigr), \quad \forall \al, \bt\in \Pi.
$$
It means that  $\theta$ is a symmetric operator.
Therefore it is an involutive orthogonal operator.
So is $w_\l$, that commutes with $\theta$ by Lemma \ref{tau-commutes w}. Hence $\tau$ is an involutive orthogonal
operator on $\h^*$.

Conversely, suppose that  $\tau$ is orthogonal, involutive, and coincides with $-w_\l$ on $\Pi_\l$.
Then
$$
(\al,\mu)=\bigl(\tau(\al),\tau(\mu)\bigr)=-\bigl(w_\l(\al),\tau(\mu)\bigr)=-\bigl(\al,w_\l\circ\tau(\mu)\bigr)=-(\al,\tilde \mu)
$$
 for all $\al\in \Pi_\l$ and $\mu\in \bar \Pi_\l$.
Thus, the condition (\ref{1nd-cond}) is fulfilled, and $\tau$ commutes with $w_\l$ by Lemma \ref{tau-commutes w}.
Therefore $\theta=-w_\l\circ \tau$ is orthogonal and involutive, which implies (\ref{gen-cond}).
\end{proof}
\noindent
We arrive at a necessary condition for a triple $(\g,\l, \tau)$ to be pseudo-symmetric.
\begin{corollary}
Suppose that $\tau$ satisfies conditions of Proposition \ref{tau-invol-orth}.
Then  $\l$-modules $\mathfrak{m}_+$ and $\mathfrak{m}_-$ are isomorphic.
\end{corollary}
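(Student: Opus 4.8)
The plan is to deduce the isomorphism from the combinatorial involution $\theta=-w_\l\circ\tau$ supplied by Proposition~\ref{tau-invol-orth}. First I would collect the properties of $\theta$ as a linear operator on $\h^*$ that are already available: by Proposition~\ref{tau-invol-orth} it is orthogonal and involutive; it is even (it preserves $\Rm_0$ and $\Rm_1$, being a composite of the even operators $w_\l$ and $\tau$); it restricts to the identity on $\h^*_\l=\Span\Pi_\l$ since $\theta(\al)=\al$ for $\al\in\Pi_\l$; and $\theta(\al)=-\tilde\al$ with $\tilde\al\in\Rm^+$ for $\al\in\bar\Pi_\l$. I would also note that, as $-w_\l$ permutes $\Pi_\l$, the permutation $\tau$ maps $\bar\Pi_\l$ onto itself.

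Next I would check that $\theta$ restricts to a bijection $\Rm^+\setminus\Rm^+_\l\to\Rm^-\setminus\Rm^-_\l$. Writing $\beta=\sum_{\delta\in\Pi_\l}a_\delta\delta+\sum_{\al\in\bar\Pi_\l}b_\al\al\in\Rm^+\setminus\Rm^+_\l$ with all $a_\delta,b_\al\ge 0$ and $\sum_\al b_\al>0$, the argument in the proof of Lemma~\ref{tau-commutes w} gives $\tilde\al=w_\l\tau(\al)=\tau(\al)+\eta_\al$ with $\eta_\al\in\Z_+\Pi_\l$ and $\tau(\al)\in\bar\Pi_\l$, so the coefficient of each $\gamma\in\bar\Pi_\l$ in $\theta(\beta)=\sum_\delta a_\delta\delta-\sum_\al b_\al\tilde\al$ equals $-b_{\tau^{-1}(\gamma)}\le 0$, and these do not all vanish. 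A root whose expansion along $\bar\Pi_\l$ is nonpositive and not identically zero lies in $\Rm^-\setminus\Rm^-_\l$; since $\theta^2=\id$ this yields the asserted bijection, and evenness of $\theta$ matches the parities of $\g_\beta\subset\mathfrak{m}_+$ and $\g_{\theta(\beta)}\subset\mathfrak{m}_-$.

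Finally, since $\theta$ is orthogonal and fixes $\h^*_\l$ pointwise it preserves the decomposition $\h^*=(\h^*_\l)^\perp\oplus\h^*_\l$ and acts as the identity on the second summand, so $\g_{\theta(\beta)}$ and $\g_\beta$ carry the same $\h_\l$-weight; together with the previous paragraph this shows $\mathfrak{m}_+$ and $\mathfrak{m}_-$ have the same $\h_\l$-supercharacter. To promote this to an isomorphism of $\l$-modules I would lift $\theta$ to an involutive automorphism $\Theta$ of the Lie superalgebra $\g$ fixing $e_\delta$ and $f_\delta$ for all $\delta\in\Pi_\l$ and satisfying $\Theta(\g_\beta)=\g_{\theta(\beta)}$; this is the involution underlying the pair $(\g,\k)$, assembled from the diagram automorphism $\tau$ and the Weyl-type operator $w_\l$ along the lines of \cite{RV}, with the pseudo-symmetry relations (\ref{1nd-cond})--(\ref{2nd-cond}) being exactly what forces $\Theta^2=\id$. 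Once $\Theta$ is in hand, $\Theta|_{\mathfrak{m}_+}\colon\mathfrak{m}_+\to\mathfrak{m}_-$ is the required isomorphism, since $\Theta(x)=x$ for $x\in\l$ gives $\Theta([x,v])=[\Theta(x),\Theta(v)]=[x,\Theta(v)]$. Alternatively one can bypass $\Theta$ by decomposing $\mathfrak{m}_\pm$ over the connected components $\l_i$ of $\Pi_\l$ and matching summands: each occurring $\l_i$-module is small --- a product of minimal fundamental modules, the six-dimensional module in the $\o\s\p(2\n|2\m)$ case, or a trivial module --- hence determined by its highest $\h_{\l_i}$-weight, which $w_\l$ exchanges with the lowest by Lemma~\ref{high-low}. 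The main obstacle is precisely this last step: producing the lift $\Theta$ (equivalently, controlling the structure constants) in the graded setting, or, on the alternative route, handling the potential failure of semisimplicity of $\mathfrak{m}_\pm$ as $\l$-modules.
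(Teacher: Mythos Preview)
Your primary route via an automorphism $\Theta$ is not the paper's argument, and you correctly flag that constructing such a $\Theta$ in the graded setting is the real work; the paper does not build $\Theta$ here (indeed the whole article is moving toward the deformed analogue of that object), so invoking it at this point would be forward-referencing. Your weight-bijection argument for $\theta\colon\Rm^+\setminus\Rm^+_\l\to\Rm^-\setminus\Rm^-_\l$ is valid but also not used.

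The paper instead follows your alternative route almost verbatim. It decomposes $\l=\bigoplus_i\l_i$ into connected components, asserts that $\mathfrak{m}_\pm$ are completely reducible $\l$-modules (so your stated obstacle about semisimplicity is handled by fiat, justified by the fact that each $\l_i$ is of basic matrix type and the modules in play are small), and focuses not on all irreducible summands of $\mathfrak{m}_\pm$ but only on the generating submodules $V^\pm_\al$, $\al\in\bar\Pi_\l$. Each $V^\pm_\al$ is a tensor product of minimal $\l_i$-modules, with the single exception from Lemma~\ref{high-low} in the $\o\s\p(2\n|2\m)$ case, and is therefore determined by the inner products of its extremal weight with $\Pi_\l$. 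Orthogonality of $\tau$ then gives $V^+_\al\simeq V^-_{\tau(\al)}$ for each $\al\in\bar\Pi_\l$. Since these $V^\pm_\al$ generate $\mathfrak{m}_\pm$ as Lie superalgebras under the $\l$-equivariant bracket, complete reducibility upgrades this matching of generators to $\mathfrak{m}_+\simeq\mathfrak{m}_-$. Your $\Theta$-approach would buy a cleaner one-line conclusion once $\Theta$ exists, at the cost of a construction the paper defers; the paper's approach is elementary but leans on the asserted complete reducibility and on the concrete identification of the $V^\pm_\al$.
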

\begin{proof}
    The Lie superalgebra $\l$ is a direct sum $\l=\sum_{i}\l_i$ of Lie superalgebras corresponding to the connected components of the Dynkin diagram of $\l$.
Each of $\l_i$ belongs to  a basic matrix  type. The modules $\mathfrak{m}_\pm$ are completely reducible and generated as Lie superalgebras by the submodules $V^\pm_\al$ with $\al \in \bar \Pi_\l$. The modules $V^\pm_\al$ are tensor products of the minimal modules for $\l_i$, except for
the special case of $\g=\o\s\p(2\n|2\m)$ considered in the proof of Lemma \ref{high-low}.
Therefore they are completely determined by inner products of their highest/lowest weights with $\Pi_\l$.
Now observe that  $V^\pm_\al\subset \g_\pm$ has an isomorphic partner $V^\mp_{\tau(\al)}\subset \g_\mp$ for each $\al\in \bar\Pi_\l$.
\end{proof}

The isomorphism between $V_\al^\pm $ and $V_{\al'}^\mp$,   $\al'=\tau(\al)$, is a consequence of (\ref{1nd-cond}).
We extend $\l$ by adding $\mathfrak{t}=\Span\{h_{\tilde \al}-h_\al\}$, for all $\al\in \bar \Pi_\l$, then  $V_\al^\pm $ and $V_{\al'}^\mp$
will be isomorphic as  $\l+\mathfrak{t}$-modules, provided  (\ref{2nd-cond}) is fulfilled. The same will be true for
$\mathfrak{m}_+$ and $\mathfrak{m}_-$.

Suppose that $\Pi_\l$ is admissible and $\tau$ satisfies the conditions of Proposition \ref{tau-invol-orth}.
Let $\c\subset \h$ denote the centralizer of $\l$ in $\h$.
For each $\al\in \bar \Pi_\l$ pick  $c_\al\in \C^\times $, $\grave c_{\al}\in \C$,  and $u_\al\in \c$ assuming $u_\al\not =0$ only if $\al$ is even,  orthogonal to $\Pi_\l$, and $\tilde  \al=\al=\tau(\al)$.
Put
\begin{equation}
\begin{array}{rcl}
y_\al &=& h_\al - h_{\tilde \al}, \\
x_\al &=& e_\al + c_\al f_{\tilde \al} + \grave c_{\al} u_\al,
\end{array}
\label{gen-spher-superpairs}
\end{equation}
for all $\al\in \bar \Pi_\l$.
Define a Lie subalgebra $\k\subset \g$ as the one generated by $\l+\t$ and by $x_\al$ with $\al\in  \bar \Pi_\l$.
\begin{definition}
  The pair of  Lie  superalgebras $\k\subset \g$ corresponding to a pseudo-symmetric triple $(\g,\l,\tau)$  is called pseudo-symmetric.
\end{definition}
\noindent
The  complex numbers $c_\al,\grave c_{\al}$ in (\ref{gen-spher-superpairs}) are called mixture parameters.

Next we prove that pseudo-symmetric pairs are spherical.
Denote by  $\p^0=\h +\l$ the Levi subalgebra with commutant $\l$ and set
$$
\p_\pm^1=\sum_{\mu\in \Pi_{\g/\l}}V_{\mu}^\pm +\p^0,
\quad
\p_\pm^{m+1}=[\p_\pm^m,\p^1_\pm], \quad m>0.
$$
We have  is an $\l$-equivariant filtration of parabolic subalgebras $\p_\pm$:
$$
\p^0\subset \p_\pm^1\subset \ldots \subset \p_\pm.
$$
\begin{lemma}
\label{m+n-}
  For all $m,n\geqslant 1$, there is an inclusion $[\p^m_+,\p^n_-]\subset \p^m_++\p^n_-$.
\end{lemma}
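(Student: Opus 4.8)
The statement $[\p^m_+,\p^n_-]\subset \p^m_++\p^n_-$ should follow by a double induction on the pair $(m,n)$, exploiting the definition $\p^{k+1}_\pm=[\p^k_\pm,\p^1_\pm]$ together with the Jacobi identity. The base case to settle is $m=n=1$, i.e. $[\p^1_+,\p^1_-]\subset \p^1_++\p^1_-$. Write $\p^1_\pm=\p^0+\sum_{\mu\in\bar\Pi_\l}V^\pm_\mu$ with $\p^0=\h+\l$. Then $[\p^0,\p^0]\subset\p^0\subset\p^1_\pm$, and $[\p^0,V^\pm_\mu]\subset V^\pm_\mu$ since each $V^\pm_\mu$ is by construction an $\l$-module (and an $\h$-eigenspace decomposition sits inside it). The only genuinely new brackets are the $[V^+_\mu,V^-_\nu]$. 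These live in $\sum_{\al}\g_\al$ with $\al$ a difference of a root in $\Rm^+_\g\setminus\Rm^+_\l$ and another such root; I would argue that any such root is either in $\Rm_\l$ (hence the bracket lands in $\l\subset\p^0$) or lies in $\Rm^+_{\g/\l}+\Z_+\Pi_\l$ (hence in some $V^+_\mu+\p^0\subset\p^1_+$) or, symmetrically, in the negative analogue inside $\p^1_-$. This is the point where one must use that $\p^1_\pm$ is a parabolic containing all of $\p^0$ and all \emph{first-layer} root spaces, so a weight obtained as $\gamma_1-\gamma_2$ with $\gamma_1,\gamma_2$ of height $1$ over $\l$ has height in $\{-1,0,1\}$ over $\l$, and height $0$ forces it into $\Rm_\l\cup\{0\}$.

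For the inductive step, suppose the inclusion holds for all pairs $(m',n')$ with $m'\le m$, $n'\le n$, not both equal to $(m,n)$, and consider $[\p^{m+1}_+,\p^n_-]=[[\p^m_+,\p^1_+],\p^n_-]$. By the Jacobi identity this is contained in $[\p^m_+,[\p^1_+,\p^n_-]]+[[\p^m_+,\p^n_-],\p^1_+]$. The first term: by the base-type case (or the induction hypothesis with $m'=1$) $[\p^1_+,\p^n_-]\subset\p^1_++\p^n_-$, so $[\p^m_+,[\p^1_+,\p^n_-]]\subset[\p^m_+,\p^1_+]+[\p^m_+,\p^n_-]\subset\p^{m+1}_++(\p^m_++\p^n_-)\subset\p^{m+1}_++\p^n_-$, using monotonicity of the filtration $\p^m_+\subset\p^{m+1}_+$. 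The second term: $[[\p^m_+,\p^n_-],\p^1_+]\subset[\p^m_++\p^n_-,\p^1_+]\subset[\p^m_+,\p^1_+]+[\p^n_-,\p^1_+]\subset\p^{m+1}_++(\p^1_-+\p^n_-)\subset\p^{m+1}_++\p^n_-$, again invoking the base case $[\p^1_+,\p^n_-]\subset\p^1_++\p^n_-$ and $\p^1_-\subset\p^n_-$. Increasing $n$ is handled by the mirror-image computation, so the induction closes.

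The main obstacle is the base case, specifically pinning down $[V^+_\mu,V^-_\nu]$: one has to be sure that no root of $\g$ arising as a difference of two positive roots outside $\Rm^+_\l$ escapes $\p^1_+\cup\p^1_-\cup\p^0$. I expect this to be a short case check using the explicit root systems of Section \ref{SecBasQSG} and the fact that elements of $\bar\Pi_\l$ have height exactly one over $\Pi_\l$; the admissibility of $\Pi_\l$ is not really needed here, only the parabolic structure. A secondary subtlety is bookkeeping the asymmetry between the roles of $m$ and $n$ in the Jacobi expansion, which is why I would phrase the induction as ``increase $m$ by one for all $n$, then increase $n$ by one for all $m$'' rather than a single induction on $m+n$; the graded (super) Jacobi identity carries the same signs but does not affect the containment statement, so no extra care with parities is needed beyond noting that all subspaces involved are $\Z_2$-graded.
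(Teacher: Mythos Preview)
Your proposal is correct and follows essentially the same route as the paper: the base case $m=n=1$ is handled by a root-height argument (the paper compresses this to ``readily follows by root arguments''), and the inductive step unwinds $\p^{m+1}_+=[\p^m_+,\p^1_+]$ via the graded Jacobi identity, first establishing the case $m=1$ for all $n$ and then increasing $m$. Two small slips to tidy up: your stated induction hypothesis excludes the pair $(m,n)$ yet you use $[\p^m_+,\p^n_-]\subset\p^m_++\p^n_-$ in the second term, so phrase the hypothesis as ``all $(m',n')$ with $m'\le m$, $n'\le n$'' when proving $(m+1,n)$; and in that same term you wrote $[\p^n_-,\p^1_+]\subset\p^1_-+\p^n_-$ where it should read $\p^1_++\p^n_-$ (then absorb $\p^1_+\subset\p^{m+1}_+$).
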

\begin{proof}
For $m=n=1$ this readily follows by root arguments. Suppose this is proved for $m=1$ and $n\geqslant 1$.
The (graded) Jacobi identity gives
$$
[\p^1_+,\p^{n+1}_-]= [\p^1_+,[\p^1_-,\p^{n}_-]]\subset [\p^1_++\p^1_-,\p^{n}_-]+[\p^1_-,\p^1_++\p^{n}_-]\subset \p^1_++\p^{n+1}_-,
$$
which proves the formula for $m=1$ and all $n$. Now suppose that it is true for $m\geqslant 1$.
Then
$$
[\p^{m+1}_+,\p^{n}_-]= [[\p^m_+,\p^1_+],\p^{n}_-]\subset  [\p^m_++\p^{n}_-,\p^1_+]+ [\p^m_+,\p^1_++\p^{n}_-]\subset \p^{m+1}_++\p^n_-,
$$
which completes the proof.
\end{proof}
\begin{corollary}
\label{cor_+_-}
  Let $m_+$ be the number of pluses and $m_-$ the number of minuses in a sequence $(\ve_1,\ldots, \ve_k)$,
  where $\ve_i\in \{\pm\}$.
Then
$$
[\ldots[\p^1_{\ve_1}, \p^1_{\ve_2}],\ldots, \p^1_{\ve_k}]\subset \p^{m_+}_++ \p^{m_-}_-.
$$
\end{corollary}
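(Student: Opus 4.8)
The plan is to prove Corollary \ref{cor_+_-} by induction on the length $k$ of the sequence $(\ve_1,\ldots,\ve_k)$, using Lemma \ref{m+n-} as the engine for the induction step. The base case $k=1$ is immediate: the left side is $\p^1_{\ve_1}$, and depending on whether $\ve_1=+$ or $\ve_1=-$ this equals $\p^1_+$ or $\p^1_-$, which is contained in $\p^{m_+}_++\p^{m_-}_-$ since one of $m_+,m_-$ is $1$ and the other is $0$ (interpreting $\p^0_\pm$ as the Levi piece $\p^0$, which sits inside both $\p^1_+$ and $\p^1_-$).

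For the inductive step, I would assume the statement holds for all sequences of length $k$ and consider a sequence $(\ve_1,\ldots,\ve_{k+1})$. Write $X=[\ldots[\p^1_{\ve_1},\p^1_{\ve_2}],\ldots,\p^1_{\ve_k}]$ for the nested bracket of the first $k$ terms, and let $m_+,m_-$ be the corresponding counts so that $X\subset \p^{m_+}_++\p^{m_-}_-$ by the inductive hypothesis. Then the full bracket is $[X,\p^1_{\ve_{k+1}}]$, and by bilinearity of the bracket this is contained in $[\p^{m_+}_+,\p^1_{\ve_{k+1}}]+[\p^{m_-}_-,\p^1_{\ve_{k+1}}]$. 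Now I split into the two cases $\ve_{k+1}=+$ and $\ve_{k+1}=-$. If $\ve_{k+1}=+$, the new counts are $(m_++1,m_-)$; the term $[\p^{m_+}_+,\p^1_+]$ equals $\p^{m_++1}_+$ by definition of the filtration, while $[\p^{m_-}_-,\p^1_+]\subset \p^1_++\p^{m_-}_-\subset \p^{m_++1}_++\p^{m_-}_-$ by Lemma \ref{m+n-} (using $m_-\geqslant 1$; the degenerate case $m_-=0$, i.e. all signs so far are $+$, is handled directly since then $X\subset \p^{m_+}_+$ and $[\p^{m_+}_+,\p^1_+]=\p^{m_++1}_+$). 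The case $\ve_{k+1}=-$ is symmetric, with the roles of $+$ and $-$ exchanged. In both cases the result lies in $\p^{m_+'}_++\p^{m_-'}_-$ with $(m_+',m_-')$ the updated counts, completing the induction.

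One point requiring a little care is the bookkeeping at the edges of the filtration: Lemma \ref{m+n-} is stated for $m,n\geqslant 1$, so whenever one of the running counts $m_+$ or $m_-$ is still zero one must argue directly rather than invoking the lemma. This happens precisely at the start of a sequence that begins with a run of identical signs, and there the nested bracket simply climbs the single filtration $\p^\bullet_+$ (or $\p^\bullet_-$) by definition of $\p^{m+1}_\pm=[\p^m_\pm,\p^1_\pm]$, so no genuine obstacle arises. I expect this boundary casework to be the only mildly delicate part; the substance of the argument is entirely carried by Lemma \ref{m+n-}, and everything else is routine propagation of the inductive hypothesis through the bilinearity of the Lie bracket.
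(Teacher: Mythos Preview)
Your proposal is correct and follows essentially the same approach as the paper: induction on $k$, using the inductive hypothesis to place the inner nested bracket in $\p^{m_+}_++\p^{m_-}_-$ and then applying Lemma \ref{m+n-} together with the definition $\p^{m+1}_\pm=[\p^m_\pm,\p^1_\pm]$ to absorb the outermost bracket. The only differences are cosmetic: the paper starts the induction at $k=2$ rather than $k=1$, and it does not spell out the boundary case $m_\pm=0$ that you handle explicitly.
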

\begin{proof}
For $k=2$, this follows from the definition of $\p_\pm^m$ and from Lemma \ref{m+n-}. Suppose the statement is proved for $k\geqslant 2$. Then, by induction,
$$
[\ldots[\p^1_{\ve_1}, \p^1_{\ve_2}],\ldots, \p^1_{\ve_{k+1}}]\subset [\p^{m_+}_++ \p^{m_-}_-, \p^1_{\ve_{k+1}}],
$$
and the statement  follows from Lemma \ref{m+n-} for general $k$.
\end{proof}
Let $V_\bt\subset V^+_\bt\op V^-_{\bt'}\oplus\C u_\bt$ with $\bt\in   \bar \Pi_{ \l}$  be the $\l$-module generated by $x_\bt$
(the element $u_\bt\in \c$ can be non-zero only if $V^+_\bt$ and $V^-_{\bt'}$ are trivial $\l$-modules).
We define $\k^0=\mathfrak{t}+\l$ and
$$
\k^1=\sum_{\bt\in \Pi_{\g/\l}}V_{\bt}+\k^0,
\quad
\k^{m+1}=[\k^m,\k^1], \quad m>0.
$$
The $\l$-modules  $\k^m$ form an ascending filtration of $\k$.
Furthermore, condition (\ref{2nd-cond}) implies
$$
[y_\al,x_\bt]=\bigl((\al,\bt)-(\tilde \al,\bt)\bigr) e_\bt+c_\bt  \bigl(-(\al,\tilde \bt)+(\tilde \al,\tilde \bt)\bigr) f_{\tilde\bt}
\propto x_{\bt}
$$
for all $\al,\bt \in \bar \Pi_{ \l}$. The rightmost implication holds because the  right-hand side of the equality simply vanishes if $\bt$ is orthogonal to $\Pi_\l$
and $\bt=\tilde \bt$.  We conclude that  the filtration $\k^0\subset \k^1\subset \ldots \subset \k$ is $\mathfrak{t}$- and therefore $\k^0$-invariant.

\begin{propn}
\label{ps-sym=sup-sph}
  Pseudo-symmetric Lie superalgebras are spherical.
\end{propn}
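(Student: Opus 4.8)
The plan is to prove the equivalent statement $\k+\b=\g$. Write $\b=\h\oplus\mathfrak{n}^+$ with $\mathfrak{n}^\pm=\sum_{\al\in\Rm^+}\g_{\pm\al}$, so $\g=\mathfrak{n}^-\oplus\h\oplus\mathfrak{n}^+$. Since $\mathfrak{n}^-=\mathfrak{n}^-_\l\oplus\mathfrak{m}_-$ with $\mathfrak{n}^-_\l:=\sum_{\al\in\Rm^+_\l}\g_{-\al}\subset\l\subseteq\k$, it suffices to show $\mathfrak{m}_-\subseteq\k+\b$. Let $P\colon\g\to\mathfrak{n}^-$ be the weight-graded projection with kernel $\b=\h\oplus\mathfrak{n}^+$; then $\k+\b=\g$ is equivalent to $P(\k)=\mathfrak{n}^-$, and since $P(\l)=\mathfrak{n}^-_\l\subseteq P(\k)$ the task reduces to $P(\k)\supseteq\mathfrak{m}_-$. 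Throughout I use the $\bar\Pi_\l$-degree $d$, the decomposition $\mathfrak{m}_-=\bigoplus_{m\ge1}\mathfrak{m}_-^{(m)}$ into $d$-homogeneous pieces, and the facts that $\p^m_\pm\subseteq\p^0\oplus\mathfrak{m}_+^{(\le m)}\oplus\mathfrak{m}_-^{(\le m)}$ while $P$ annihilates $\h$ and $\mathfrak{m}_+$ and restricts to the identity on $\mathfrak{m}_-$ and $\mathfrak{n}^-_\l$ (here $\mathfrak{m}_\pm^{(\le m)}:=\bigoplus_{j\le m}\mathfrak{m}_\pm^{(j)}$).

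For the bottom layer $\mathfrak{m}_-^{(1)}=\sum_{\mu\in\bar\Pi_\l}V^-_\mu$: from $x_\al=e_\al+c_\al f_{\tilde\al}+\grave{c}_\al u_\al$ with $e_\al\in\mathfrak{n}^+$ and $u_\al\in\h$ we get $P(x_\al)=c_\al f_{\tilde\al}$, $c_\al\ne0$. By Lemma \ref{high-low} (using the evenness and highest/lowest-weight flipping of $w_\l$), $-\tilde\al=-w_\l\tau(\al)$ is the lowest weight of the irreducible $\l$-module $V^-_{\tau(\al)}$, so $f_{\tilde\al}$ spans its lowest weight space and $U(\l)\,f_{\tilde\al}=V^-_{\tau(\al)}$. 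Since $u_\al\in\c$ centralizes $\l$ and $[\l,e_\al]\subseteq\mathfrak{n}^+$, for all $z_1,\dots,z_r\in\l$ the element $[z_1,[\dots,[z_r,x_\al]]]\in U(\l)\,x_\al\subseteq\k^1$ differs from $c_\al[z_1,[\dots,[z_r,f_{\tilde\al}]]]$ by an element of $\l+\b$; applying $P$ (which sends $\l+\b$ into $\mathfrak{n}^-_\l\subseteq P(\k)$ and fixes $\mathfrak{m}_-$) and using $c_\al\ne0$ gives $[z_1,[\dots,[z_r,f_{\tilde\al}]]]\in P(\k)$. Hence $V^-_{\tau(\al)}\subseteq P(\k)$ for every $\al\in\bar\Pi_\l$, and as $\tau$ permutes $\bar\Pi_\l$ this yields $\mathfrak{m}_-^{(1)}\subseteq P(\k)$; the same computation shows $\mathfrak{m}_-^{(1)}$ is spanned by the $d$-degree-one parts of the elements of $\k^1$.

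To reach all of $\mathfrak{m}_-$ I would induct on $m$, using that $\mathfrak{m}_-$ is generated as a Lie algebra by $\mathfrak{m}_-^{(1)}$ — so $\mathfrak{m}_-^{(m+1)}=[\mathfrak{m}_-^{(m)},\mathfrak{m}_-^{(1)}]$, which is built into the filtration $\p^0\subset\p^1_-\subset\cdots$ — and that $\k^1\subseteq\p^1_++\p^1_-$, whence $\k^m\subseteq\p^m_++\p^m_-$ by Corollary \ref{cor_+_-}. For $z\in\k^m$, $w\in\k^1$ decompose $z=z_0+z_++z_-$ and $w=w_0+w_++w_-$ along $\p^0\oplus\mathfrak{m}_+^{(\le m)}\oplus\mathfrak{m}_-^{(\le m)}$ (so $w_\pm$ has $d$-degree at most $1$). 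Expanding $[z,w]$ into nine brackets and applying $P$, every term except $[z_-^{(m)},w_-^{(1)}]$ (the bracket of the top $\mathfrak{m}_-$-components) lands in $P(\k)$: the summands carrying an $\mathfrak{m}_+$- or $\h$-factor are killed by $P$ up to pieces of $\p^0$, whose $P$-image is $\mathfrak{n}^-_\l\subseteq P(\k)$; the mixed summands $[z_\pm,w_\mp]$ lie in $\p^m_\pm+\p^1_\mp$ by Lemma \ref{m+n-}, so their $P$-images lie in $\mathfrak{n}^-_\l+\mathfrak{m}_-^{(\le m)}\subseteq P(\k)$ by the inductive hypothesis; and the summands $[z_0,w_\bullet]$, $[z_\bullet,w_0]$ only move things within the pieces already accounted for (here $u_\al\in\c$ and the $\mathfrak{t}$-invariance of $\{\k^m\}$ keep the relevant brackets $\l$-equivariant and $d$-graded). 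Since $[z,w]\in\k$ this forces $[z_-^{(m)},w_-^{(1)}]\in P(\k)$; letting $z,w$ range and invoking the step-one spanning statement together with $[\mathfrak{m}_-^{(m)},\mathfrak{m}_-^{(1)}]=\mathfrak{m}_-^{(m+1)}$ gives $\mathfrak{m}_-^{(m+1)}\subseteq P(\k)$. Thus $P(\k)\supseteq\mathfrak{m}_-$ and $\k+\b=\g$. The main obstacle is exactly this bookkeeping — verifying that, modulo $\b$, no bracket of the generators $x_\al$ produces anything outside $P(\k)$ beyond the genuinely new top-degree term — and it is precisely for this that Lemma \ref{m+n-}/Corollary \ref{cor_+_-}, together with $u_\al\in\c$ and the invariance of the filtration $\{\k^m\}$ noted just before the proposition, are designed.
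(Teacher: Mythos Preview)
Your argument is correct and runs parallel to the paper's, using the same filtrations $\k^m$, $\p^m_\pm$ and the same control of cross-terms via Lemma~\ref{m+n-} and Corollary~\ref{cor_+_-}. The paper, however, works on the opposite side: it isolates the \emph{positive} top-degree piece $[\ldots[e_{\mu_1},e_{\mu_2}],\ldots,e_{\mu_{m+1}}]$ from the iterated bracket of the $x_\mu$'s, proves inductively that $\k^{m}+\p^{m}_-\supseteq\p^m_+$, and concludes $\k+\b_-=\g$. You instead project to $\mathfrak{n}^-$, extract the negative top-degree piece $[z_-^{(m)},w_-^{(1)}]$, and conclude $\k+\b=\g$ for the positive Borel. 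The paper's choice makes the base case trivial (the $e_\al$ are simple root vectors), whereas your base case needs Lemma~\ref{high-low} and the fact that $\tau$ permutes $\bar\Pi_\l$ to see that the $f_{\tilde\al}$ generate $\mathfrak{m}_-^{(1)}$ under $\l$; otherwise the two arguments are mirror images.

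One small loose end: in the inductive step you take $z\in\k^m$ and need $z_-^{(m)}$ to range over all of $\mathfrak{m}_-^{(m)}$, but your stated hypothesis $\mathfrak{m}_-^{(\le m)}\subseteq P(\k)$ only produces such a $z$ in $\k$, not in $\k^m$. Either strengthen the hypothesis to ``the $\mathfrak{m}_-^{(m)}$-component map $\k^m\to\mathfrak{m}_-^{(m)}$ is onto'' (your computation $[z,w]_-^{(m+1)}=[z_-^{(m)},w_-^{(1)}]$ does propagate this), or simply allow $z\in\k$ and note that Lemma~\ref{m+n-} still gives $P([z_+,w_-])\in\mathfrak{n}^-_\l+\mathfrak{m}_-^{(1)}$ regardless of the $d$-degree of $z_+$.
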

\begin{proof}
Observe that $\k^0\subset \p^0$ and $\k^1+\p^1_-=\p_-^1+\p^1_+$ by construction. Suppose that we have proved an equality
$ \k^m+\p^m_-=\p_-^m+\p^m_+$ for $m\geqslant 1$.
For any sequence $(x_{\mu_i})_{i=1}^{m+1}$ we have
$$
[\ldots[x_{\mu_1},x_{\mu_2}],\ldots, x_{\mu_{m+1}}]=[\ldots[e_{\mu_1},e_{\mu_2}],\ldots, e_{\mu_{m+1}}]+c[\ldots[f_{\tau(\mu_1)},f_{\tau(\mu_2)}],\ldots, f_{\tau(\mu_{m+1})}]+\ldots,
$$
where $c\in \C$ and the omitted terms are in $\p^m_++\p^m_-$, by Corollary \ref{cor_+_-}.
By the induction assumption, the above expression simplifies  to
$$
=
[\ldots[e_{\mu_1},e_{\mu_2}],\ldots, e_{\mu_{m+1}}]\mod \k^m+\p^{m+1}_-.
$$
Since the filtration is $\l$-invariant, and all
$[\ldots[e_{\mu_1},e_{\mu_2}],\ldots, e_{\mu_{m+1}}]$ generate $\p^{m+1}_+\mod \p^m_+$ we conclude that
 $\k^{m+1}+\p^{m+1}_-\supset \p^{m+1}_+$ and $\k^{m+1}+\p^{m+1}_-\supset \p^{m+1}_+ +\p^{m+1}_-$.
 This implies
$$
\k+\b_-=(\k+\l)+\b_-=\k+(\l+\b_-)=\k+\p_-\supset \p_++\p_-=\g.
$$
The proof is complete.
\end{proof}
\noindent
In the next section we address  the question when the subalgebra $\k\subset \g$ is proper for a given pseudo-symmetric  triple,
in the special symmetric grading that we have fixed.
 \subsection{Decorated diagrams and selection rules}
\label{SecDDD-SR}
Like in the non-graded case the permutation $\tau$ entering a pseudo-symmetric triple $(\g,\l,\tau)$
is an involutive even automorphism of the Dynkin diagram coinciding with $-w_\l$ on $\Pi_\l$.
Such triples can be visualized via decorated Dynkin diagrams similar to non-graded pseudo-symmetric triples considered in  \cite{RV}.
In the graded setting, we use black colour for nodes in $\Pi_\l$ and white from $\bar \Pi_\l$ regardless of their parity.

In this section, we work out  criteria for a decorated diagram when the subalgebra $\k$ equals $\g$ for all values
of the mixture parameters.
Recall that in the non-graded case there is a  selection rule that is formulated as follows.
Let $C(\bt)$ denote  the union of connected components of $\Pi_\l$ that are  connected to  $\{\bt,\tau(\bt)\}$. Then
diagrams with
\be
\label{RVSR}
C(\bt)\cup \{\bt,\tau(\bt)\}\simeq
\begin{picture}(35,10)
\put(5,3){\circle*{3}}
\put(30,3){\color{black}\circle{3}}
 \put(7,3){\line(1,0){21}}
 \put(1,6){$\small \al$} \put(30,6){$\small \bt$}
 \end{picture}
\ee
should be ruled out since $\k=\g$ in this case  \cite{RV}.

Graded selection rules are more intricate. They  amount to a set of lemmas
which guarantee that   under certain conditions on $\bt\in \bar \Pi_\l$ the Lie superalgebra $\g^{(\bt)}=\Span\{e_\bt,f_\bt,h_\bt\}$
 along with $\l$-modules it generates  lies  in $\k$. That will be further used
to prove the equality $\k=\g$ in Proposition \ref{SR=>Triv}.
Now let us demonstrate that the only non-graded selection rule fails to be definite in the super case.
\begin{lemma}
\label{non-grad-sel-rule}
Suppose that a decorated Dynkin diagram is such that  (\ref{RVSR}) holds for some $\bt\in \bar \Pi_l$.
 Then  $\g^{(\bt)}\subset \k$ unless $\bt$ is odd and $\al$ is even.
 \end{lemma}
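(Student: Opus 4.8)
The plan is to work inside the rank‑two sub‑superalgebra of $\g$ carried by $\{\al,\bt\}$ and to liberate $e_\bt,f_\bt,h_\bt$ from the single generator $x_\bt$, using the action of $\l$ together with one bracket. First I would unwind the hypothesis: (\ref{RVSR}) forces $C(\bt)=\{\al\}$, $\tau(\bt)=\bt$, and $\hat\l_\al=\l_\al=\langle e_\al,f_\al\rangle$ is an $A_1$‑component. Since $\al\in\Pi_\l$, Proposition \ref{odd-Levi-iso} rules out $\al$ being a grey odd root; and scanning the diagrams of Section \ref{SecBasQSG} one sees that a black (odd non‑isotropic) simple root occurs only in $\o\s\p(1|2\m)$ and is always attached by a double bond. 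Hence the $\Pi_\l$‑node $\al$ in the single‑bond picture (\ref{RVSR}) is even, the Weyl operator of $\l_\al$ is the reflection in $\al$, so $\tilde\bt=w_\l\circ\tau(\bt)=w_\l(\bt)=\bt+\al\in\Rm^+$. The same inspection shows the $\bar\Pi_\l$‑node $\bt$ is either even or grey odd, so the case ``$\bt$ odd and $\al$ even'' excluded by the lemma is exactly the grey‑odd case; assume from now on that $\bt$ is even. As $\bt$ is joined to $\Pi_\l$ it is not orthogonal to $\Pi_\l$, so $u_\bt=0$; thus $x_\bt=e_\bt+c_\bt f_{\tilde\bt}\in\k$ and $y_\bt=h_\bt-h_{\tilde\bt}\in\t\subset\k$.

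Next I would compute the $\l$‑module $V_\bt\subset\k$ generated by $x_\bt$. Since $\bt-\al$, $\bt+2\al$ and $2\bt+\al$ are not roots, one gets $[f_\al,x_\bt]=0$ and $[e_\al,[e_\al,x_\bt]]=0$, so $V_\bt=\Span\{x_\bt,\,x'_\bt\}$ with $x'_\bt:=[e_\al,x_\bt]=\lambda\, e_{\tilde\bt}+\mu\, c_\bt f_\bt$ for nonzero scalars $\lambda,\mu$; both vectors lie in $\k^1\subset\k$. Then I would take the graded bracket $[x_\bt,x'_\bt]\in\k$: only the weight‑zero terms survive (the weights $\pm(2\bt+\al)$ are not roots), and a short calculation gives that $[x_\bt,x'_\bt]$ equals a nonzero scalar multiple of $h_\bt+h_{\tilde\bt}$, the plus sign coming from $[f_{\tilde\bt},e_{\tilde\bt}]=-h_{\tilde\bt}$ and hence using that $\tilde\bt$ is even. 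Because $h_\bt$ and $h_{\tilde\bt}$ span a two‑dimensional subspace of $\h$ and $y_\bt=h_\bt-h_{\tilde\bt}\in\k$, this yields $h_\bt\in\k$. Now $[h_\bt,e_\bt]=(\bt,\bt)e_\bt$ and $[h_\bt,f_{\tilde\bt}]=-(\bt,\tilde\bt)f_{\tilde\bt}$ with $(\bt,\bt)+(\bt,\tilde\bt)=2(\bt,\bt)+(\bt,\al)\neq0$ for a single bond between even roots, so $x_\bt$ and $[h_\bt,x_\bt]$ are linearly independent elements of $\Span\{e_\bt,f_{\tilde\bt}\}$; hence $e_\bt,f_{\tilde\bt}\in\k$, then $f_\bt\propto[e_\al,f_{\tilde\bt}]\in\k$, and therefore $\g^{(\bt)}=\Span\{e_\bt,f_\bt,h_\bt\}\subset\k$.

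The main obstacle is exactly the graded‑sign bookkeeping in that middle calculation, and it is what isolates the exception. For odd $\bt$ the bracket $[x_\bt,x'_\bt]$ is an anticommutator and $\{f_{\tilde\bt},e_{\tilde\bt}\}=+h_{\tilde\bt}$ rather than $-h_{\tilde\bt}$, so the weight‑zero part collapses to a scalar multiple of $h_\bt-h_{\tilde\bt}=y_\bt$, which already lies in $\t\subset\k$; this route then produces no new Cartan element, and there is no way to free $e_\bt$ and $f_\bt$ this way — which is precisely why the conclusion is not claimed when $\bt$ is odd and $\al$ is even. A subsidiary point to verify along the way is that $\lambda$, $\mu$ and the scalar multiplying $h_\bt+h_{\tilde\bt}$ are nonzero; this holds because $\al$ is even, so the $\al$‑strings $\{\bt,\bt+\al\}$ and $\{-\tilde\bt,-\bt\}$ are honest two‑dimensional $\s\l_2(\al)$‑modules and none of $[e_\al,e_\bt]$, $[e_\al,f_{\tilde\bt}]$, $[e_\bt,f_\bt]$, $[e_{\tilde\bt},f_{\tilde\bt}]$ vanishes.
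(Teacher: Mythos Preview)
Your proposal is correct and follows essentially the same route as the paper: compute $x'_\bt=[e_\al,x_\bt]\in\k$, then the bracket $[x_\bt,x'_\bt]$, whose Cartan part is $c_\bt(2h_\bt+h_\al)=c_\bt(h_\bt+h_{\tilde\bt})$ for even $\bt$ but collapses to a multiple of $h_\al$ for odd $\bt$; since $h_\al\in\l\subset\k$ (equivalently, your $y_\bt=h_\bt-h_{\tilde\bt}=-h_\al$), this yields $h_\bt\in\k$ precisely when $\bt$ is even, and then the splitting of $x_\bt$ follows. Your explicit remark that a black odd $\al$ is excluded because such a root only occurs with a double bond is a nice addition the paper leaves implicit, and your final separation via $[h_\bt,x_\bt]$ and the check $2(\bt,\bt)+(\bt,\al)\neq 0$ spells out what the paper compresses into ``therefore $e_\bt,f_\bt\in\k$''.
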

\begin{proof}
Suppose that  (\ref{RVSR}) is fulfilled with some $\bt\in\bar \Pi_\l$ and $\al\in \Pi_\l$
First of all observe that $\al$ cannot be odd, because otherwise $w_\l$ will not be even, by Proposition \ref{odd-Levi-iso}
(that will also violate the condition (\ref{1nd-cond})).

Furthermore, for even $\al$, we have $\tilde \bt=\bt+\al$.
 With $x_\bt=e_\bt+c_\bt [f_\al,f_\bt]\in \k$, we define
$$
x_{\tilde\bt}=[e_\al,x_\bt]=[e_\al,e_\bt]+c_\bt[h_\al,f_\bt]=[e_\al,e_\bt]+c_\bt f_\bt\in \k.
$$
Since $(\al,\bt)\not=0$, there is no term $\grave{c}_\bt u_\bt$ in $x_\bt$. Then
$$
[x_\bt,x_{\tilde\bt}]=[e_\bt+c_\bt [f_\al,f_\bt], [e_\al,e_\bt]+c_\bt f_\bt]=c_\bt\bigl( h_\bt+(-1)^{|\bt|} h_\bt+(-1)^{|\bt|} h_\al\bigr)\in \k.
$$
Since $h_\al\in \k$, we conclude that $h_\bt\in \k$ and therefore $e_\bt,f_\bt\in \k$ if and only if  $\bt$ is even.
\end{proof}

It follows from  the proof of this lemma  that the diagram
$\begin{picture}(35,10)
\put(5,3){\circle*{3}}
\put(30,3){\color{black}\circle{3}}
 \put(7,3){\line(1,0){21}}
\put(3,7){$\al$}\put(28,7){$\bt$}
 \end{picture}$
with even  $\al$ and odd $\bt$ gives rise to a 5-dimensional  spherical  subalgebra
$\k=\Span\{e_\al,h_\al,f_\al,x_\bt,x_{\tilde \bt}\}$ in $\g=\g\l(2|1)$.
This $\k$ is beyond the scope of the current study because this grading of $\g\l(2|1)$ is not symmetric.
As a subdiagram, it nevertheless appears, for instance, in
\begin{center}
 \begin{picture}(50,40)

\put(10,20){\circle*{3}}
\put(11.5,21.5){\line(3,2){25}}
\put(11.5,18.5){\line(3,-2){25}}
\put(37.5,38){\framebox(3,3)}
\put(37.5,0){\framebox(3,3)}

%\put(112,8){$\al_m=\al_{n-1}$}
%\put(112,48){$\al_n$}

\put(40,3){\line(0,1){35}}
\put(38,3){\line(0,1){35}}
 \end{picture}
\quad
 \begin{picture}(50,40)
\put(10,20){\circle*{3}}
\put(11.5,21.5){\line(3,2){25}}
\put(11.5,18.5){\line(3,-2){25}}
\put(37.5,38){\framebox(3,3)}
\put(37.5,0){\framebox(3,3)}
\qbezier(44,6)(49,20)(44 ,34) \put(45,9){\vector(-1,-3){2}} \put(45,31){\vector(-1,3){2}}

%\put(112,8){$\al_m=\al_{n-1}$}
%\put(112,48){$\al_n$}

\put(40,3){\line(0,1){35}}
\put(38,3){\line(0,1){35}}
 \end{picture}
 \end{center}
with two grey odd  tail nodes. These diagrams yield non-trivial pairs $(\g,\k)$ with  $\g=\o\s\p(2|4)$. They  can be extended further to the left with alternating white and black nodes, cf. (\ref{C-type-diag0}) and (\ref{C-type-diag}).

Similarly to shapes of graded Dynkin diagrams we consider shapes of decorated diagrams by discarding grading.
In other words, a graded decorated diagram is obtained from its shape by adding the grading datum.
\begin{propn}
 Let $\Dc$ be a decoration of a Dynkin diagram $D$.  Then the shape of $\Dc$ is a decorated shape of $D$.
\end{propn}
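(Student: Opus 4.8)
The plan is to verify, condition by condition, that the data defining a decoration of the graded diagram $D$ descends to the corresponding data for its shape $\tilde D$. Write $\tilde\g$ for the classical simple Lie algebra carried by $\tilde D$ (of type A, B, C, or D; for $\g=\o\s\p(2|2\m)$ one takes $\tilde D$ to be the Dynkin diagram of $\s\o(2+2\m)$ by convention), and identify the node set of $\tilde D$ with $\Pi$ through the obvious identification of underlying graphs. By Proposition \ref{tau-invol-orth} a decoration of $D$ amounts to an admissible $\Pi_\l\subset\Pi$ together with an even involutive automorphism $\tau$ of $\Pi$ that coincides with $-w_\l$ on $\Pi_\l$ and extends to an involutive orthogonal operator on $\h^*$ --- equivalently, that satisfies conditions (\ref{1nd-cond})--(\ref{2nd-cond}). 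I must exhibit the same data for $\tilde D$, i.e.\ for $\tilde\g$.

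First I would note that $\tau$, which is an automorphism of the graded diagram $D$ and even in the sense of the notion introduced before Definition \ref{Weyl-operator}, lies in $\tilde W_0\subseteq\tilde W=\Aut(\tilde D)$, hence is an automorphism of $\tilde D$, and involutivity is retained. Second, admissibility of $\Pi_\l$ over $\tilde\g$ is automatic: $\tilde\g$ has no grey nodes, so the obstruction of Proposition \ref{odd-Levi-iso} cannot occur, and the trivial grading of each connected component of $\tilde\l$ is its minimal symmetric grading.

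The main point is the identity $w_\l=w_{\tilde\l}$ of $\Z$-linear maps on the root lattice of the Levi; from it $\tau|_{\Pi_\l}=-w_\l|_{\Pi_\l}=-w_{\tilde\l}|_{\Pi_\l}$ follows at once. Now $w_\l=\prod_i w_{\l_i}$ is a product over the connected components of $\Pi_\l$, and by Definition \ref{Weyl-operator} the Weyl operator $w_{\l_i}=w_{\hat\l_i}$ is determined solely by the assignment $\zt_j\mapsto\zt_{j'}$, $j'=k_i+1-j$, on the natural module $\C^{k_i}$ of the $\g\l$-extension $\hat\l_i$ (and by the identity on weights outside that block). Discarding the grading changes neither $k_i$ nor the index involution $j\mapsto j'$, so $w_{\l_i}$ is literally the same operator for $\l_i\subset\g$ and for its shape $\tilde\l_i\subset\tilde\g$; this is precisely why Definition \ref{Weyl-operator} uses $\zt_i\mapsto\zt_{i'}$ uniformly rather than the honest longest Weyl group element, cf.\ Remark \ref{Rem-Weyl-op}. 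Taking the product over $i$ gives $w_\l=w_{\tilde\l}$.

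Finally, $\tau$ extends to an involutive orthogonal operator on $\h^*(\tilde\g)$: as an automorphism of the Dynkin diagram $\tilde D$ it preserves the Cartan matrix of $\tilde\g$, hence is an isometry of the root system, and since the simple roots span $\h^*(\tilde\g)$ it extends to an orthogonal operator, still involutive. Together with the coincidence $\tau|_{\Pi_\l}=-w_{\tilde\l}|_{\Pi_\l}$ just established, Proposition \ref{tau-invol-orth} applied to $\tilde\g$ in place of $\g$ --- which is legitimate, since its proof applies verbatim to the purely even case, where admissibility is vacuous --- yields conditions (\ref{1nd-cond})--(\ref{2nd-cond}) for $\tilde\g$. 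Hence $(\tilde D,\Pi_\l,\tau)$ is a decoration of $\tilde D$; that is, the shape of $\Dc$ is a decorated shape of $D$. The one delicate step is the Weyl-operator identity $w_\l=w_{\tilde\l}$, particularly for the non-type-A components that occur when $\g=\o\s\p(2\n|2\m)$, where $w_{\l_i}$ genuinely differs from the longest Weyl group element of $\l_i$; there the check has to go through Definition \ref{Weyl-operator} directly rather than through longest Weyl elements.
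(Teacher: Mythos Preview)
Your proof is correct and follows essentially the same approach as the paper: both rest on the observation that the Weyl operator $w_\l$ of Definition~\ref{Weyl-operator} is defined purely by the index involution $\zt_j\mapsto\zt_{j'}$ and hence coincides for the graded diagram and its shape, whence $\tau$ is an automorphism of $\tilde D$ agreeing with $-w_{\tilde\l}$ on $\Pi_\l$. The paper's version is much terser, dispensing with the admissibility check and the explicit appeal to Proposition~\ref{tau-invol-orth}, but the content is the same.
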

\begin{proof}
We can define the Weyl operator in the non-graded case as we did in in the graded, cf. Remark \ref{Rem-Weyl-op}.
Then for each $\Pi_\l$, the operator  $w_\l$ is the same as in the non-graded shape. Now notice that the  automorphism $\tau$ of a
graded diagram is an automorphism of its shape that coincides
with $-w_\l$ on $\Pi_\l$.
\end{proof}
Proposition \ref{odd-Levi-iso} states that a grey odd root cannot constitute a connected component of $\Pi_\l$.
It turns out that such a node cannot be isolated from $\Pi_\l$ in all cases.

\begin{lemma}
\label{isolated odd}
  Suppose that a decorated Dynkin diagram contains a subdiagram
\be
\label{ISO-ODD}
\begin{picture}(35,10)
\put(5,3){\circle{3}}
\put(30.5,1.5){\framebox(3,3)}
\multiput(7,3)(6,0){4}{\line(1,0){3}}
 \put(1,6){$\small \al$} \put(30,6){$\small \bt$}
 \end{picture}
\ee
with $\al,\bt \in \bar \Pi_\l$, where $\al$ is of any parity, a grey odd node $\bt=\tau(\bt)$ is isolated from $\Pi_\l$ and
  $(\bt,\al)\not =0$.  Then $\g^{(\al)}+\g^{(\bt)}\subset \k$.
\end{lemma}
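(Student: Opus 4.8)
The plan is to produce, by iterated (super)brackets of the generators of $\k$ and using $\l\subset\k$, all of $e_{\pm\al},h_\al,e_{\pm\bt},h_\bt$, i.e.\ the full $\g^{(\al)}+\g^{(\bt)}$. The one device absent in the non-graded situation is that a grey odd root yields a Cartan element from a \emph{self}-bracket, so I would start with $\bt$. Since $\bt$ is odd, $u_\bt=0$; since $\bt=\tau(\bt)$ is isolated from $\Pi_\l$, $w_\l$ fixes $\bt$, hence $\tilde\bt=w_\l\circ\tau(\bt)=\bt$ and $x_\bt=e_\bt+c_\bt f_\bt$ with $c_\bt\in\C^\times$. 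As $(\bt,\bt)=0$ the weight $2\bt$ is not a root, so $e_\bt^2=f_\bt^2=0$ in $\g$ and
$$[x_\bt,x_\bt]=2x_\bt^2=2c_\bt\bigl(e_\bt f_\bt+f_\bt e_\bt\bigr)=2c_\bt h_\bt ,$$
so $h_\bt\in\k$.

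With $h_\bt$ in hand I would split $x_\al$. Orthogonality of $w_\l$ and $\tau$, together with $w_\l(\bt)=\tau(\bt)=\bt$, gives $(\bt,\tilde\al)=(\bt,\al)\neq0$; and since $h_\bt$ commutes with $u_\al\in\h$,
$$[h_\bt,x_\al]=(\bt,\al)e_\al-c_\al(\bt,\tilde\al)f_{\tilde\al}=(\bt,\al)\bigl(e_\al-c_\al f_{\tilde\al}\bigr)\in\k ,$$
so $e_\al-c_\al f_{\tilde\al}\in\k$ and, combining with $x_\al$, also $2e_\al+\grave c_\al u_\al\in\k$ and $2c_\al f_{\tilde\al}+\grave c_\al u_\al\in\k$. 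To finish, one descends to cases. (i) If $\tau(\al)=\al$ and $u_\al=0$: then $e_\al,f_{\tilde\al}\in\k$, and since $\tilde\al=w_\l(\al)$ is a weight of the irreducible $\l$-module $V^+_\al$ (recall $w_\l$ permutes the weights of $V^{\pm}_\al$, cf.\ Lemma \ref{high-low}), the vector $f_{\tilde\al}$ lies in $V^-_\al$, so $U(\l)f_{\tilde\al}=V^-_\al\subset\k$ contains $f_\al$, whence $h_\al=[e_\al,f_\al]\in\k$. (ii) If $\tau(\al)=\al$ and $u_\al\neq0$: then $\al$ is even, orthogonal to $\Pi_\l$, and $\tilde\al=\al$; here $\al+\bt$ is a root, bracketing $e_\al-c_\al f_\al$ with $x_\bt$ kills the cross terms ($\al-\bt$ is not a root) and gives $w=C_1 e_{\al+\bt}+C_2 f_{\al+\bt}\in\k$ with $C_1,C_2\neq0$, and one more self-bracket extracts a Cartan element: if $\al+\bt$ is grey then $[w,w]$ is a nonzero multiple of $h_{\al+\bt}=h_\al+h_\bt$; in the sole exception $\g=\o\s\p(3|2\m)$ with $\al=\ve_\n$ and $\al+\bt=\dt_\m$ non-isotropic, one climbs one step further along the $2\dt_\m$-string, peeling off $h_{\dt_\m}$, $e_{\pm2\dt_\m}$, $e_{\pm\dt_\m}$ by repeated $\ad h_\bt$, and again obtains $h_\al\in\k$; then $\ad h_\al$ applied to $2e_\al+\grave c_\al u_\al$ (using $(\al,\al)\neq0$) gives $e_\al\in\k$, hence $f_\al\in\k$. (iii) If $\tau(\al)\neq\al$: then $u_\al=0$ and $\tau(\al)$ is again a neighbour of $\bt$ in $\bar\Pi_\l$; the computation above gives $e_\al\in\k$ and $f_{\tilde\al}\in V^-_{\tau(\al)}\subset\k$, so $f_{\tau(\al)}\in\k$, and running it verbatim with $\tau(\al)$ in place of $\al$ gives $e_{\tau(\al)},f_\al\in\k$, so $\g^{(\al)}=\langle e_\al,f_\al,[e_\al,f_\al]\rangle\subset\k$ (and likewise $\g^{(\tau(\al))}\subset\k$). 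In every case $\g^{(\al)}\subset\k$.

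It remains to get $\g^{(\bt)}$. With $\g^{(\al)}\subset\k$ and $x_\bt=e_\bt+c_\bt f_\bt\in\k$, using $[e_\al,f_\bt]=0$ ($\al-\bt$ not a root) while $\al+\bt$ is a root,
$$[e_\al,x_\bt]=[e_\al,e_\bt]=N_{\al\bt}\,e_{\al+\bt}\in\k,\qquad N_{\al\bt}\neq0 ,$$
and then the graded Jacobi identity, with $[f_\al,e_\bt]=0$, makes $[f_\al,e_{\al+\bt}]$ a nonzero multiple of $e_\bt$ (the coefficient involves $(\al,\bt)\neq0$), so $e_\bt\in\k$, $f_\bt=c_\bt^{-1}(x_\bt-e_\bt)\in\k$, and with $h_\bt$ from the first step, $\g^{(\bt)}\subset\k$.

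The hard part is bookkeeping rather than conceptual. One must carry the parameter $u_\al$ through the whole argument — it survives only the bracket with $h_\bt$, which is exactly why that element must be produced first — and verify against the explicit root systems of Section \ref{SecBasQSG} the handful of root-membership facts used above (for $2\bt$, $\al-\bt$, $\al+\bt$, $2(\al+\bt)$) together with the statement that an even involution fixing a grey odd node $\bt$ either fixes a neighbour of $\bt$ or sends it to another neighbour; the short even neighbour $\ve_\n$ in $\o\s\p(3|2\m)$ is the only subcase not handled uniformly, requiring the extra climb along the doubled-root string.
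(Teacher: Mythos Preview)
Your proof is correct and starts exactly as the paper does: the self-bracket $[x_\bt,x_\bt]=2c_\bt h_\bt$ puts $h_\bt$ in $\k$, and $[h_\bt,x_\al]=(\bt,\al)(e_\al-c_\al f_{\tilde\al})$ then partially splits $x_\al$. The divergence comes after that.

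In your case (ii) (when $\grave c_\al u_\al\neq0$, hence $\tilde\al=\al$) you work hard: bracketing $e_\al-c_\al f_\al$ with $x_\bt$, taking a self-bracket of the resulting odd vector, and in the $\o\s\p(3|2\m)$ sub-case climbing the $2\dt_\m$-string to peel off $h_\al$. The paper dispatches this in one line: from $x_\al$ and $e_\al-c_\al f_\al$ one already has $e_\al+c_+u_\al\in\k$ and $f_\al+c_-u_\al\in\k$ for suitable constants; bracketing each of these once more with $h_\bt$ kills the Cartan part (it commutes) and leaves $(\bt,\al)e_\al$ and $-(\bt,\al)f_\al$, so $e_\al,f_\al,h_\al\in\k$ immediately. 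No root-string excursion, no special case for $\o\s\p(3|2\m)$.

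Your case (iii) is in fact vacuous: an even diagram automorphism fixing the grey node $\bt$ must fix its neighbours in all the basic diagrams with the chosen minimal symmetric grading (for general linear $\g$ the only non-trivial $\tau$ swaps the two odd nodes and fixes neither; for $\o\s\p(2\n|2\m)$ the tail flip touches only the last two even nodes). So $\tau(\al)=\al$ always, and only your cases (i)--(ii) occur.

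Finally, for extracting $\g^{(\bt)}$ you pass through $e_{\al+\bt}$ and back; the paper instead notes that once $h_\al\in\k$ one has $[h_\al,x_\bt]=(\al,\bt)(e_\bt-c_\bt f_\bt)$, which together with $x_\bt$ gives $e_\bt,f_\bt\in\k$ directly. Both work; the paper's is again shorter.
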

\begin{proof}
Indeed, we conclude that $h_\bt\in \k$ because
$$
[x_\bt,x_\bt]=x_\bt x_\bt+x_\bt x_\bt=[e_\bt,c_\bt f_\bt]+[c_\bt f_\bt,e_\bt]=2c_\bt h_\bt.
$$
Observe that $(\bt,\tilde \al)=\bigl(\bt,\tau(\al)+\ldots \bigr)=\bigl(\tau(\bt),\tau(\al)\bigr)=(\bt,\al)$, where we have suppressed
 terms from $\Z\Pi_\l$. Commuting $h_\bt$ with $x_{\al}=e_{\al}+c_{\al}f_{\tilde\al}+\grave{c}_\al u_\al$ we find
$$
[h_\bt,x_{\al}]=(\bt,\al)(e_{\al}- c_{\al}f_{\tilde\al})\in \k.
$$
Thus we conclude that $e_{\al}\in \k$, $f_{\tilde\al}\in \k$, and therefore $f_\al,h_\al\in \k$, if $\grave{c}_\al u_\al=0$.
Finally, $h_\al$ splits $x_\bt$, so $e_\bt,f_\bt\in \k$ either.

Now suppose that $\grave{c}_\al u_\al\not =0$. Then $\tilde \al=\al$, and, since $h_\bt\in \k$ and $(\al,\bt)\neq 0$,  we have a system
$$
\left\{
\begin{array}{ccc}
e_\al+c_\al f_\al+\grave{c}_\al u_\al&=&0\mod  \k,
\nn\\
e_\al-c_\al f_\al &=&0\mod \k,\nn
\end{array}
\right.
\quad\Leftrightarrow\quad
\left\{
\begin{array}{cccl}
e_\al+c_+u_\al&=0& \mod\k, & c_+=\grave{c}_\al/2,\\
f_\al+c_-u_\al&=0& \mod\k, & c_-=\grave{c}_\al/2 c_\al.
\end{array}
\right.
$$
Commuting these elements with $h_\bt\in\k$, we obtain $e_\al,f_\al\in \k$ and hence $h_\al\in \k$.
\end{proof}
\begin{lemma} \label{le-b-d}
Suppose that decorated Dynkin diagram contains a subdiagram
\be
\label{4NODES}
\begin{picture}(350,15)
\put(160,3){\circle*{3}}
\put(161.5,3){\line(1,0){27}}

\put(188.5,1.5){\framebox(3,3)}

\put(191.5,3){\line(1,0){24}}
\put(217,3){\circle*{3}}
\put(242,3){\circle{3}}

\multiput(217,3)(6,0){4}{\line(1,0){3}}

\put(155,8){$\al$}
\put(185,8){$\bt$}

\put(212,8){$\gm$}
\put(242,8){$\sigma$}
 \end{picture}
\ee
where $\al$ and $\gm$ are even, $\bt$ is odd, and $\gm$ with $\si$ form an arbitrary  connected even diagram of rank 2. Then $\g^{(\bt)}+\g^{(\si)}\subset \k$.
\end{lemma}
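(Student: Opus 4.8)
The plan is to run the bracket calculus of Lemmas~\ref{non-grad-sel-rule} and~\ref{isolated odd} in three stages: first I would pull the coroot $h_\bt$ into $\k$, then use $h_\bt$ to pull $\g^{(\si)}$ into $\k$, and finally feed $\g^{(\si)}$ back in to recover the rest of $\g^{(\bt)}$. First I record the weights involved. Since $\bt$ is odd and adjacent to $\al,\gm\in\Pi_\l$ it is not orthogonal to $\Pi_\l$, so $x_\bt=e_\bt+c_\bt f_{\tilde\bt}$ carries no $u_\bt$-term; as $\tau$ fixes $\al$ and $\gm$ and $\bt$ is the node joined to both, $\tau(\bt)=\bt$, and since $\{\al\},\{\gm\}$ are $A_1$-components one has $\tilde\bt=w_\l(\bt)=\bt+\al+\gm$, with $\bt+\al$, $\bt+\gm$ and $\tilde\bt$ all isotropic odd positive roots. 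Similarly $\si$ is adjacent to $\gm$, hence not orthogonal to $\Pi_\l$, so $x_\si=e_\si+c_\si f_{\tilde\si}$ with $\tilde\si=w_\l(\si)=\si+k\gm$, where $k\in\{1,2\}$ according to the type of the rank-two diagram $\{\gm,\si\}$.

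For the first stage I would apply $\ad e_\al$ and $\ad e_\gm$ to $x_\bt$, obtaining in $\k$ the elements $v_1=[e_\al,x_\bt]$, equal to $e_{\al+\bt}$ plus a non-zero multiple of $c_\bt f_{\bt+\gm}$, and $v_2=[e_\gm,x_\bt]$, equal to $e_{\gm+\bt}$ plus a non-zero multiple of $c_\bt f_{\bt+\al}$ (the lower-vector terms are present because $\al-\tilde\bt$ and $\gm-\tilde\bt$ are roots). In the graded bracket $[v_1,v_2]$ the mixed terms have weights $\pm(2\bt+\al+\gm)$, which are not roots and so vanish, leaving a combination of the coroots $h_{\bt+\al}$ and $h_{\bt+\gm}$; rewriting these through $h_\bt$ and $h_\al,h_\gm$ (possible since $\bt+\al,\bt+\gm$ are isotropic) gives $[v_1,v_2]\equiv c\,h_\bt$ modulo $h_\al$ and $h_\gm$, with a scalar $c$ that a direct computation shows to be non-zero. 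As $h_\al,h_\gm\in\l\subset\k$, this gives $h_\bt\in\k$ (equivalently one may use $[x_\bt,[e_\gm,v_1]]$). This non-vanishing is the only delicate point of the proof: for the single even neighbour of Lemma~\ref{non-grad-sel-rule} the analogous coefficient cancels when $\bt$ is odd, and it is the presence of the second even neighbour $\gm$ that breaks the cancellation; checking it requires careful tracking of the graded signs and of the normalisations of the composite root vectors $e_{\bt+\al},f_{\bt+\al}$ and their analogues.

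For the second stage, since $\bt$ is isotropic, $h_\bt$ acts on a weight vector of weight $\mu$ as multiplication by $(\mu,\bt)$; here $(\si,\bt)=0$ because $\bt$ and $\si$ are not adjacent, while $(\tilde\si,\bt)=k(\gm,\bt)\neq0$, so $[h_\bt,x_\si]$ is a non-zero multiple of $f_{\tilde\si}$. Hence $f_{\tilde\si}\in\k$ and $e_\si=x_\si-c_\si f_{\tilde\si}\in\k$. Now $e_\gm,f_\gm\in\l\subset\k$ together with $e_\si$ and $f_{\tilde\si}$ generate the rank-two simple Lie algebra with simple roots $\gm,\si$ — one obtains $f_\si$ as an iterated bracket of $e_\gm$ with $f_{\tilde\si}$ — and that algebra contains $h_\si$; therefore $\g^{(\si)}=\Span\{e_\si,f_\si,h_\si\}\subset\k$.

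For the last stage, with $e_\si,f_\si\in\k$ the bracket $[f_\si,x_\bt]=c_\bt[f_\si,f_{\tilde\bt}]$ is, up to a non-zero scalar, the root vector $f_{\bt+\al+\gm+\si}$; applying $\ad e_\si$ returns a non-zero multiple of $f_{\tilde\bt}$, so $f_{\tilde\bt}\in\k$, whence $e_\bt=x_\bt-c_\bt f_{\tilde\bt}\in\k$, and $f_\bt$, being a non-zero multiple of $[e_\al,[e_\gm,f_{\tilde\bt}]]$, lies in $\k$ as well. Together with $h_\bt$ from the first stage this shows $\g^{(\bt)}\subset\k$, completing the argument. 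The other types of the rank-two diagram $\{\gm,\si\}$, as well as the case where $\tau$ acts non-trivially on the ambient diagram, change only the explicit weights and the number of times $\ad e_\gm$ is applied, and go through in the same way.
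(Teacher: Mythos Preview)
Your argument is correct and follows the same three-stage strategy as the paper: produce a Cartan element involving $h_\bt$, use it to split $x_\si$, then feed $\g^{(\si)}$ back to split $x_\bt$. The only notable differences are cosmetic: the paper obtains the Cartan element as $[x_\bt,x_{\tilde\bt}]=c_\bt(2h_\bt+h_\al+h_\gm)$ with $x_{\tilde\bt}=[e_\gm,[e_\al,x_\bt]]$ (your parenthetical alternative), which makes the nonvanishing of the $h_\bt$-coefficient immediate rather than ``delicate''; and in the last stage the paper simply uses $h_\si$ to split $x_\bt$ (since $(\si,\bt)=0$ but $(\si,\tilde\bt)=(\si,\gm)\neq0$), which is shorter than your detour through $\ad f_\si$ followed by $\ad e_\si$.
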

\begin{proof}
By the assumption, $\tau(\bt)=\bt$ and $\tau(\si)=\si$.
 Choose the grading such that $(\al,\bt)=1=-(\bt,\gm)$.
We can set $\grave{c}_\bt=0$ and $\grave{c}_\si=0$ because $\bt$ and  $\si$ are not isolated from $\Pi_\l$.
Evaluating commutators of  $e_\al$ and  $e_\gm$ with $x_\bt=e_\bt+c_\bt [[f_\al,f_\bt],f_\gm]\in \k$, we obtain
$x_{\tilde \bt}=c_{\bt}f_\bt- [[e_\al,e_\bt],e_\gm]\in \k$. Then
$[x_\bt,x_{\tilde \bt}]=c_\bt(2 h_\bt+h_\al+h_\gm)=c_\bt h_\dt\in\k.$
Since
$$(\dt,\si)= (\gamma,\si)\not =-(\dt,\tilde \si)=-\Bigl(2\bt+\gm,\si-\frac{2(\gm,\si)}{(\gm,\gm)}\gm+\ldots\Bigl)=-4\frac{(\gm,\si)}{(\gm,\gm)}+(\gm,\si),$$
 the element $h_\dt$  splits $x_{\si}=e_\si+c_\si f_{\tilde\si}$ and therefore $x_{\tilde \si}\propto  f_\si+ c_{-\si} e_{\tilde \si}\in \k$ with some $c_{-\si }\in \C^\times$.
Hence $e_{\si},f_{\si}$, and $h_\si\in \k$. The element $h_\si$ splits $x_\bt$ and $x_{\tilde \bt}$ because
$(\si,\bt)=0$ and $-(\si,\tilde \bt)=-(\si,\gm)\neq 0$.
Therefore $e_{\bt},f_{\bt},h_\bt\in \k$ either.
\end{proof}
\noindent
The next lemma addresses diagrams with even orthogonal shape.
\begin{lemma}
\label{sel-rul-d}
 Suppose that a decorated Dynkin diagram of type $\g=\o\s\p(2\n|2\m)$ with rank $n$  contains one of the subdiagrams
\be
\label{D-TAIL}
\begin{picture}(250,50)

\put(30,20){\circle*{3}}
\put(31.5,20){\line(1,0){27}}

\put(58.5,18.5){\framebox(3,3)}
\put(61.5,20){\line(1,0){27}}

\put(90,20){\circle*{3}}
\put(91.5,21.5){\line(3,2){25}}
\put(91.5,18.5){\line(3,-2){25}}
\put(117.5,39){\circle{3}}
\put(117.5,1){\circle{3}}

\put(30,25){$\al$}
\put(60,25){$\bt$}
\put(87,25){$\gm$}
\put(122,38){$\si=\al_{n-1}=\al_m$}
\put(122,0){$\al_n=\si'$}

\qbezier(120,6)(125,20)(120 ,34) \put(121,9){\vector(-1,-3){2}} \put(121,31){\vector(-1,3){2}}
 \end{picture}
\begin{picture}(130,50)

\put(0,20){\circle*{3}}
\put(1.5,20){\line(1,0){27}}

\put(28.5,18.5){\framebox(3,3)}
\put(31.5,21.5){\line(3,2){25}}
\put(31.5,18.5){\line(3,-2){25}}
\put(57.5,39){\circle{3}}
\put(57.5,1){\circle*{3}}

\put(-5,25){$\al$}
\put(25,25){$\bt$}
\put(62,0){$\gm$}
\put(62,38){$\si=\al_{n}=\al_m$}
 \end{picture}
\ee
 where $\al$, $\gm$, and $\si$ are even and $\bt$ is odd.
 Then $\g^{(\bt)}+\g^{(\si)}+\g^{(\si')}\subset \k$.
\end{lemma}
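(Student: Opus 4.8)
The plan is to run the same machine as in Lemmas~\ref{isolated odd} and~\ref{le-b-d}: starting from the generators $x_\bt,x_\si$ (and $x_{\si'}$ in the first subdiagram), use the adjoint action of $\l$ together with graded brackets to produce Cartan elements of $\k$, and then use those to ``split'' each $x$ into its $e$- and $f$-parts. Note first that $\al$ and $\gm$ lie in $\Pi_\l$, so $\g^{(\al)},\g^{(\gm)}$ — and, in the second subdiagram, also $\g^{(\si')}$, where $\si'$ is the black tail leaf — are already contained in $\l\subset\k$; the remaining content is that $\g^{(\bt)}$, $\g^{(\si)}$ and (in the tail-flipped first subdiagram) $\g^{(\si')}$ also land in $\k$. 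Since $\bt,\si,\si'$ are adjacent to $\Pi_\l$, none of them carries a $u$-term, so $x_\bt=e_\bt+c_\bt f_{\tilde\bt}$, $x_\si=e_\si+c_\si f_{\tilde\si}$, and (in the first subdiagram) $x_{\si'}=e_{\si'}+c_{\si'}f_{\tilde{\si'}}$, with all $c$'s in $\C^\times$. I would first fix the minimal symmetric grading so that $(\al,\bt)=1=-(\bt,\gm)$, record $\tau(\bt)=\bt$ and $\tau(\si)=\si'$ (the tail flip; $\tau(\si)=\si$ in the second diagram), and compute $\tilde\bt=w_\l(\bt)$, $\tilde\si=w_\l\circ\tau(\si)$, $\tilde{\si'}=w_\l\circ\tau(\si')$ explicitly; by Lemma~\ref{high-low} the vectors $f_{\tilde\si},f_{\tilde{\si'}}$ are the extreme weight vectors of $V^-_{\si'},V^-_\si$, which is what later lets one transport them back to $f_{\si'},f_\si$.

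The first move is to build a Cartan element from $x_\bt$ exactly as in Lemma~\ref{le-b-d}: applying $\ad e_\al$ and $\ad e_\gm$ to $x_\bt$ yields an element $x_{\tilde\bt}\in\k$ equal to $c_\bt f_\bt$ plus a root vector of weight $\tilde\bt=\bt+\al+\gm$, and then $[x_\bt,x_{\tilde\bt}]=c_\bt\,h_\dt\in\k$, where $h_\dt=h_\bt+h_{\tilde\bt}$ acts on root vectors through $(\dt,\cdot)$ with $\dt=2\bt+\al+\gm$ (the terms $[e_\bt,e_{\tilde\bt}]$ and $[f_{\tilde\bt},f_\bt]$ drop out because $\dt$ is not a root). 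The second move is to split the tail generators: one checks $(\dt,\si)\neq-(\dt,\tilde\si)$, and in the first subdiagram also $(\dt,\si')\neq-(\dt,\tilde{\si'})$, i.e. $h_\dt$ has distinct eigenvalues on $e_\si,f_{\tilde\si}$ (and on $e_{\si'},f_{\tilde{\si'}}$), so that $e_\si,f_{\tilde\si},e_{\si'},f_{\tilde{\si'}}\in\k$; applying $\ad e_\gm$ (the fork node is black) transports $f_{\tilde\si},f_{\tilde{\si'}}$ up to $f_{\si'},f_\si$, whence $h_\si=[e_\si,f_\si]$ and $h_{\si'}=[e_{\si'},f_{\si'}]$ lie in $\k$ and $\g^{(\si)}+\g^{(\si')}\subset\k$. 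Finally $h_\si\in\k$ splits $x_\bt$ because $(\si,\bt)\neq-(\si,\tilde\bt)$, equivalently $(\si,\dt)\neq0$, so $e_\bt,f_{\tilde\bt}\in\k$ and hence $e_\bt,f_\bt,h_\bt\in\k$, giving $\g^{(\bt)}\subset\k$.

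The real work — and the only place the $D$-type structure enters — is the explicit case analysis behind these two moves. The rigidity of the subdiagrams (\ref{D-TAIL}) pins $\g$ down to $\o\s\p(6|2\m)$ in the first and $\o\s\p(4|2\m)$ in the second, since the unique odd simple root $\bt=\delta_\m-\ve_1$ must sit next to the fork node; one then has to verify, separately for the tail-flipped and non-flipped configurations (and in the obvious small-rank edge cases), that $\tilde\bt,\tilde\si,\tilde{\si'}$ come out as stated and that the eigenvalue inequalities $(\dt,\si)\neq-(\dt,\tilde\si)$, $(\dt,\si')\neq-(\dt,\tilde{\si'})$, $(\si,\dt)\neq0$ all hold. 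This is the $D$-type analogue of the computation in the proof of Lemma~\ref{le-b-d}, and the $\C^6$-module exception noted in the proof of Lemma~\ref{high-low} does not occur here because the two tail leaves are not both black. Once $\g^{(\bt)}+\g^{(\si)}+\g^{(\si')}\subset\k$ is established, it feeds into the triviality criterion (Proposition~\ref{SR=>Triv}) just as the other selection-rule lemmas do.
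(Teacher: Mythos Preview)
Your approach mirrors the paper's almost exactly: build $h_\dt$ with $\dt=2\bt+\al+\gm$ from $[x_\bt,x_{\tilde\bt}]$, use it to split the tail generator(s), obtain $h_\si$, and then split $x_\bt$. For the left subdiagram this is complete.

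There is, however, a genuine gap for the right subdiagram. You assert that ``$\bt,\si,\si'$ are adjacent to $\Pi_\l$, so none of them carries a $u$-term.'' In the right picture the fork node is $\bt\in\bar\Pi_\l$, and the white tail leaf $\si=\al_n$ is connected \emph{only} to $\bt$. Hence $\si$ is orthogonal to all of $\Pi_\l$; moreover $\tau(\si)=\si$ and $w_\l(\si)=\si$, so $\tilde\si=\si$. These are precisely the conditions under which $\grave c_\si u_\si$ is allowed to be nonzero, and your splitting argument via $h_\dt$ does not dispose of the Cartan piece: $[h_\dt,x_\si]=(\dt,\si)(e_\si-c_\si f_\si)$ only gives you $e_\si-c_\si f_\si\in\k$, not $e_\si$ and $f_\si$ separately.

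The paper treats this case explicitly in a final paragraph, invoking the trick from Lemma~\ref{isolated odd}: from $x_\si=e_\si+c_\si f_\si+\grave c_\si u_\si\in\k$ and $e_\si-c_\si f_\si\in\k$ one extracts $e_\si+c_+u_\si$ and $f_\si+c_-u_\si$ in $\k$; commuting each of these once more with $h_\dt$ kills the $u_\si$ part (since $u_\si\in\h$), yielding $e_\si,f_\si\in\k$ and hence $h_\si\in\k$. After that the rest of your argument goes through unchanged. So the fix is short, but as written your proof omits a case that the lemma must cover.
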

\begin{proof}
The reasoning is similar to the proof of Lemma \ref{le-b-d}.
We will only indicate the differences.
First we assume that $\grave{c}_\si=0$ for the diagram on the right.
  By the assumption, $\tau(\bt)=\bt$ and $\tau(\si)=\si'$ for left diagram and $\tau(\si)=\si$ for the right one.
As before, we argue that
$h_\dt$ with $\dt =2 \bt+\al+\gm$ is in $\k$.
Further we find that
$$(\dt,\si)= (\gamma,\si)=-1\not =-(\dt,\tilde \si)=-(2\bt+\gm,\al_n+\gm)=1\ \mathrm{ (for\ left\ diagram)},$$
$$(\dt,\si)= (2\bt,\si)=-2\not =-(\dt,\tilde \si)=-(2\bt,\al_n)=2\ \mathrm{ (for\ right\ diagram)}.$$
As a consequence,  the element $h_\dt$  splits $x_{\si}$,  $x_{\tilde \si}$, and $x_{\bt}$.
For the left diagram, the rest of the proof is essentially the same as in Lemma \ref{le-b-d}.
For the right diagram, it can be proved that $x_\bt$ and $x_{\tilde \bt}$ are separated by the element
$h_\si\in \k$ because
$$
 (\si,\bt) = 1\neq -(\si,\tilde \bt) = -(\si,\al + \bt + \gamma) =- (\si,\bt) = -1.
$$
 In both cases, $e_{\bt}, f_{\bt}, h_{\bt} \in \k$.

Now suppose that the term $\grave{c}_\si u_\si$ is present in $x_\si$ for the diagram on the right. We proceed
as in the proof of Lemma \ref{isolated odd} and demonstrate that $\g^{(\si)}\in \k$. Then we complete the proof as before.
\end{proof}

  \begin{definition}
We call a triple  $(\g,\l,\tau_\l)$ and the corresponding decorated Dynkin diagram trivial if the subalgebra $\k$
they generate coincides with $\g$ for all values of mixture parameters $c_\al\in \C^\times$, $\grave c_\al \in \C$,  $\al\in \bar \Pi_\l$.
\end{definition}

We will say that a decorated diagram $D$ violates selection rules if either
 $D$ contains an even  subdiagram (\ref{RVSR}) or one of the subdiagrams (\ref{ISO-ODD}), (\ref{4NODES}), (\ref{D-TAIL}).
It is important to note  that we allow for arbitrary orientation of these subdiagrams while the orientation of the total diagram is fixed as in Section \ref{SecBasQSG}.

For a diagram $D$, we define a subdiagram $D_0\subset D$ of $\g\l$-type by throwing away the tail. Specifically, we remove from  $D$
the two last (rightmost) nodes if the shape $\tilde D$ of $D$ is of  even orthogonal type, and the last node if $\tilde D$
 is either odd orthogonal or symplectic. For general linear $D$, we set $D_0=D$.
\begin{lemma}
  If $D$ violates the selection rule, then $\tau|_{D_0}=\id$ in all cases.
\label{tau=id}
\end{lemma}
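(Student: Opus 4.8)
The plan is to exploit that, by Proposition~\ref{tau-invol-orth}, $\tau$ is an even involutive \emph{orthogonal} automorphism of the graded Dynkin diagram $D$, so it lies in a very small group once the bilinear form is taken into account. I would first dispose of the orthosymplectic cases, where the conclusion needs no hypothesis. An orthogonal diagram automorphism preserves the norms $(\al,\al)$ of the simple roots, hence the location of the multiple bonds: for $\g=\o\s\p(2\n+1|2\m)$ and $\g=\s\p\o(2\n|2\m)$ the multiple bond at the unique extremal node forces $\tau=\id$; for $\g=\o\s\p(2\n|2\m)$ (which includes $\g=\o\s\p(2|2\m)$, whose shape is of type $D$) the two ``tail'' simple roots have equal norm while any other pair of fork arms has unequal norm, so the only nontrivial orthogonal automorphism is the transposition of those two tail nodes. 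Both $\id$ and this transposition restrict to the identity on $D_0$, by the definition of $D_0$; hence $\tau|_{D_0}=\id$ throughout the orthosymplectic range, and one checks that the only forbidden subdiagram living there, (\ref{D-TAIL}), is consistent with this.

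It then remains to treat $\g=\g\l(N|2\m)$, where $D_0=D$ is a chain of $N+2\m-1$ nodes whose grading-preserving automorphism group is $\{\id,\,w\}$, $w$ being the flip of the chain --- available because the two grey odd simple roots occupy the symmetric positions $\m$ and $N+\m$. I would assume $\tau=w$ and show that $D$ then violates no selection rule, which is the contrapositive of the claim. Two elementary facts are used: (a) $w$ fixes at most one simple root, the central node $c$, which exists precisely when $N$ is even and is then necessarily \emph{even} (it is neither the node $\m$ nor the node $N+\m$); and (b) $\tau$ preserves $\Pi_\l$ setwise, since $\tau|_{\Pi_\l}=-w_\l$ and $-w_\l$ permutes $\Pi_\l$, while $w$ interchanges the two chain-neighbours of $c$.

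With (a) and (b) I would run through the three $\g\l$-relevant forbidden subdiagrams. Subdiagrams (\ref{ISO-ODD}) and (\ref{4NODES}) each single out a $\tau$-fixed \emph{odd} (grey) node of $\bar\Pi_\l$ --- the boxed node $\bt$, and also $\si$ in (\ref{4NODES}) --- whereas by (a) the only $\tau$-fixed node, if any, is even; so neither subdiagram can occur. Subdiagram (\ref{RVSR}), which requires $C(\bt)\cup\{\bt,\tau(\bt)\}$ to be the two-node diagram of (\ref{RVSR}), forces $\tau(\bt)=\bt$; this already excludes it unless $N$ is even and $\bt=c$, which is white, and then $C(c)$ must be a single black node, i.e.\ exactly one chain-neighbour of $c$ lies in $\Pi_\l$. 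But by (b) those two neighbours are interchanged by $\tau$ and $\Pi_\l$ is $\tau$-stable, so they are either both in $\bar\Pi_\l$ (whence $C(c)=\emptyset$) or both in $\Pi_\l$ (whence $|C(c)|\ge 2$ if they lie in different components, and if they lie in one component that component, being an interval of the chain, contains $c$, contradicting $c$ white). Either way (\ref{RVSR}) is impossible; and (\ref{D-TAIL}) does not arise for $\g\l$. Hence $D$ violates no selection rule, which gives the lemma.

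I expect the main obstacle to be the careful unpacking of ``$D$ violates the selection rule'' for each of (\ref{RVSR}), (\ref{ISO-ODD}) and (\ref{4NODES}) --- in particular the fact that all three pin down a $\tau$-fixed node, which is exactly what makes them incompatible with the flip in the $\g\l$ case --- together with the bookkeeping needed to confirm that orthogonality really kills the triality automorphisms for the $D_4$-type shapes ($\o\s\p(4|4)$, $\o\s\p(6|2)$, $\o\s\p(2|6)$, $\o\s\p(4|2)$), where one must note that the three arms of the fork never all carry the same norm.
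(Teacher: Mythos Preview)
Your argument is correct and follows the same route as the paper: for orthosymplectic $\g$ every even orthogonal automorphism of $D$ already restricts to the identity on $D_0$, and for $\g\l$ one shows that the flip $w$ cannot fix the nodes that the forbidden subdiagrams pin down. Two small slips do not affect the proof: in (\ref{4NODES}) the node $\si$ is \emph{even}, not odd (but your contradiction already follows from the odd $\tau$-fixed $\bt$ alone), and (\ref{D-TAIL}) is not the only forbidden subdiagram that can live in an orthosymplectic $D$ --- the others can occur there too --- but since you have already shown $\tau|_{D_0}=\id$ unconditionally in that range, this remark is harmless.
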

\begin{proof}
  We need to consider only the case of general linear $\g$. Then only subdiagrams (\ref{RVSR}), (\ref{ISO-ODD}), or (\ref{4NODES})
may occur in $D$. For the subdiagrams  (\ref{RVSR}) and  (\ref{4NODES}) the statement is obvious.
In the case of   (\ref{ISO-ODD}), the symmetric grading under consideration requires the presence of two odd nodes. Therefore $\bt$ cannot be $\tau$-fixed if $\tau$ reverts  $D$. Thus $\tau$ is identical in all cases.
\end{proof}

\begin{propn}
\label{SR=>Triv}
If a decorated Dynkin diagram violates selection rules, then it is trivial.
\end{propn}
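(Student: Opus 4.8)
The plan is to combine the four selection-rule lemmas with a propagation (``marching along the diagram'') argument that, starting from the local violation, captures every Chevalley generator of $\g$. So suppose the decorated diagram $D$ violates the selection rules: by definition it contains, in some orientation, one of the subdiagrams (\ref{RVSR}) with $\al$ and $\bt$ even, (\ref{ISO-ODD}), (\ref{4NODES}), or (\ref{D-TAIL}). First I would apply the matching lemma. For (\ref{RVSR}) this is Lemma \ref{non-grad-sel-rule}, and the hypothesis that the subdiagram is \emph{even} is precisely what excludes the sole exceptional case ``$\bt$ odd and $\al$ even'' there; for (\ref{ISO-ODD}) it is Lemma \ref{isolated odd}; for (\ref{4NODES}) it is Lemma \ref{le-b-d}; for (\ref{D-TAIL}) it is Lemma \ref{sel-rul-d}. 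In each case one obtains $\g^{(\bt)}\subset\k$ for the white node $\bt$ of that subdiagram, together with the extra root subalgebras the lemma supplies: $\g^{(\al)}$ in the isolated-odd case, $\g^{(\si)}$ in case (\ref{4NODES}), and $\g^{(\si)}+\g^{(\si')}$ in the $D$-tail case. Note that the lemmas are arranged so that in the orthosymplectic situations the tail node(s) of $D$ are among the captured subalgebras.

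Next I would march along the diagram. Having captured $\g^{(\bt)}$, put $\Pi^{(1)}=\Pi_\l\cup\{\bt\}$ (enlarged further by any tail nodes already captured); then the subalgebra of $\g$ generated by $\{e_\al,f_\al:\al\in\Pi^{(1)}\}$ lies in $\k$, and in particular $h_\bt\in\k$. Because $\bt\notin\Pi_\l$, this Cartan element genuinely \emph{splits} the surviving generators: for a white node $\gamma$ adjacent to $\bt$, since $u_\gamma$ has weight zero one has $[h_\bt,x_\gamma]=(\bt,\gamma)e_\gamma-(\bt,\tilde\gamma)c_\gamma f_{\tilde\gamma}$, and a root computation --- using Lemma \ref{tau=id}, which forces $\tau|_{D_0}=\id$, hence $\tilde\gamma=w_\l(\gamma)$ on the body $D_0$ --- gives $(\bt,\gamma+\tilde\gamma)\neq0$, so that combining this with $x_\gamma$ recovers $e_\gamma$ and $f_{\tilde\gamma}$ in $\k$ up to the $u_\gamma$-correction. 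When $\grave c_\gamma u_\gamma$ is absent (the generic case: $\gamma$ not orthogonal to $\Pi_\l$, or $\gamma$ odd) this already yields $\g^{(\gamma)}\subset\k$ after acting by elements of $\l$; when it is present --- so $\gamma=\tilde\gamma$ is even and orthogonal to $\Pi_\l$ --- one clears $u_\gamma$ by the very device used in the proof of Lemma \ref{isolated odd}, commuting with a second captured Cartan element that pairs nontrivially with $\gamma$. Iterating along each branch, with the tail already absorbed in the orthosymplectic cases, one reaches $\g^{(\al)}\subset\k$ for every $\al\in\Pi$.

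Once every $e_\al,f_\al$ with $\al\in\Pi$ lies in $\k$, the full set of Chevalley generators of $\g$ is in $\k$, so $\k=\g$; since no step of the argument constrained the mixture parameters $c_\al\in\C^\times$, $\grave c_\al\in\C$, this holds for all of them, which is exactly the triviality of $(\g,\l,\tau)$.

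The step I expect to be the real obstacle is this propagation: one must check, uniformly over general linear $\g$ and the three orthosymplectic families and for every admissible location of the odd simple root(s), that the newly captured Cartan elements really do split each surviving $x_\gamma$ --- the non-vanishing $(\bt,\gamma+\tilde\gamma)\neq0$ and its analogues as one moves along a branch, which by (\ref{gen-cond}) is equivalent to $(\tilde\bt,\gamma+\tilde\gamma)\neq0$ --- and that every $\grave c_\gamma u_\gamma$-term can be removed. This is a finite, diagram-local case analysis, and it is exactly why the awkward orthogonal-tail configurations were quarantined into Lemma \ref{sel-rul-d} in advance and why Lemma \ref{tau=id} is invoked at the outset: together they collapse the $\tau$-bookkeeping on $D_0$ to the identity and leave only patterns of the same type already handled in Lemmas \ref{non-grad-sel-rule}--\ref{sel-rul-d}.
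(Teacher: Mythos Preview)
Your approach is essentially the paper's: apply the relevant lemma to the local violation, then propagate along the diagram until every Chevalley generator is captured. The paper organises the march a bit more explicitly---first rightward from the violating subdiagram $L$, then leftward---and, for the key splitting step, uses a module-theoretic formulation rather than your Cartan-element one: at each stage one has a subalgebra $\l_k\supset \l$ generated by the roots already captured, and the next white $\mu$ is absorbed because the $\l_k$-module generated by $e_\mu$ is \emph{not self-dual} (equivalently, not isomorphic to the one generated by $f_{\tilde\mu}$), which follows from $\tau|_{D_0}=\id$ (Lemma~\ref{tau=id}). This packages all your inequalities $(\bt,\gamma+\tilde\gamma)\neq 0$ into a single conceptual statement and avoids the node-by-node verification you flag as the obstacle, though the underlying mechanism is the same. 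For the leftward pass the paper likewise just observes that $e_\mu$, $f_{\tilde\mu}$, and $u_\mu$ transform differently under the subalgebra of already-captured roots to the right.
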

\begin{proof}
Denote by  $L\subset D$ a  subdiagram that violates the selection rules. Split the set of nodes of total diagram
to a disjoint union  $D=D^l\cup D^r$, where $D^l$ comprises the nodes on the left of $L$.
In the case when the shape $\tilde D$ of $D$ is even orthogonal, and $L=\{\al,\bt\}$ is in the tail of $D$, we include all  the three tail nodes  in  $D^r$.
The further proof will be done in two steps. First, moving from $L$ rightward, we demonstrate that  $\g^{(\mu)}\subset \k$ for all $\mu\in D^r$.
Then we proceed leftward and prove that for the nodes from $D^l$.

It follows that $\tau$ is identical on $\{\al,\bt\}\subset L$.
If $\tilde D$ is even orthogonal and  $L=\{\al,\bt\}$ is in the tail, then
$g^{(\mu)}\subset \k$ for all $\mu\in D^r$ by Lemmas \ref{non-grad-sel-rule} and \ref{isolated odd}.
We arrive to the  same conclusion if $L$ is (\ref{D-TAIL}), by Lemma \ref{sel-rul-d}: then
  $L=D^r$.

Consider the case when  $L\subset D_0$ assuming that $L$ is either (\ref{RVSR}) or (\ref{ISO-ODD}).
Put $L_1=L$ and denote by  $\l_1$ the subalgebra with roots in $L_1$. We have an inclusion $\l_1\subset \k$.
Suppose that we have constructed $L_k\subset D_0^r$ and $\l_k\subset \k$ for $k\geqslant 1$.
If there is a node $\mu\in D_0^r\cap \bar \Pi_\l$ on the right of $L_k$ that is
connected to $L_k$, then $\g^{(\mu)}\subset \k$, because the $\l_k$-module generated by the root vector $e_\mu$ is not self-dual (mind that
$\tau$ is identical on $D_0$ by Lemma \ref{tau=id}).
Let $C_\mu\subset D_0^r\cap \Pi_\l$ be the connected component that is connected to $\mu$ from the right, that is, $C_\mu\cap L_k=\varnothing$.
Then we set $L_{k+1}=L_k\cup \{\mu\}\cup C_\mu$ and define $\l_{k+1}=\l_k+\g^{(\mu)}+\sum_{\nu\in C_\mu}\g^{(\nu)}$.

It is clear that $L_k=D_0^r$ and  for sufficiently large $k$. Finally, by considering a tail node $\mu$ of the initial diagram $D^r$
we conclude that $\g^{(\mu)}\subset \k$ for all $\mu\in D^r$.

Now we start moving leftward to process nodes from $D^l$. This time we include in consideration the diagram $L$ as in (\ref{D-TAIL}).
Suppose there is a node $\mu\in D^l\cap \bar \Pi_\l$. We can assume that it is the rightmost among them.
Then the mixed generator $x_\mu=f_\mu+c_\mu e_{\tilde \mu}+\grave c_\mu u_\al$ is split by the subalgebra $\mathfrak{m}$ comprising the roots on the right of $\mu$,
because $f_\mu$, $e_{\tilde \mu}$, and $u_\mu$ transform differently under $\mathfrak{m}$.
\end{proof}
\noindent
As a consequence, we derive the following  restriction on the position of the odd root in some ortho-symplectic diagrams.
\begin{corollary}
\label{Levi-Gr-Boxes}
Suppose that $\m<m $. Then the subalgebra $\k$ corresponding to a diagram
(\ref{OSP-I-odd})-(\ref{OSP-I-even-2})  exhausts all of ortho-symplectic Lie superalgebra $\g$.
\end{corollary}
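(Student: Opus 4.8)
The plan is to deduce the corollary from Proposition \ref{SR=>Triv}. By that proposition it is enough to check that under the hypothesis $\m<m$ each of the diagrams (\ref{OSP-I-odd})--(\ref{OSP-I-even-2}) violates the selection rules, that is, contains one of the subdiagrams (\ref{RVSR}), (\ref{ISO-ODD}), (\ref{4NODES}), (\ref{D-TAIL}). Once this is established, the conclusion $\k=\g$ for all values of the mixture parameters is immediate.

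First I would recall how the integer $m$ and the odd root enter these diagrams. For the type~I ortho-symplectic Satake diagrams the data $(\Pi_\l,\tau)$ is arranged so that the associated K-matrix of Theorem \ref{s-graded} has a corner block of size $m$; on the diagram this corresponds to a string of $\bar\Pi_\l$-nodes near one end, fixed by (or swapped under) $\tau$, flanked by black nodes of $\Pi_\l$. In the fixed minimal symmetric grading of Section \ref{SecBasQSG} the unique odd simple root of $\g$ occupies position $\m$. The hypothesis $\m<m$ thus forces this odd node to lie strictly inside the $m$-corner, so it becomes a $\tau$-fixed odd node adjacent to black nodes of $\Pi_\l$ --- precisely the situation addressed by the graded selection-rule lemmas.

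Next I would run the (finite) case analysis node by node. If the forced odd node is grey, the induced subdiagram is exactly (\ref{ISO-ODD}), a grey $\tau$-fixed node isolated from $\Pi_\l$ but joined to a neighbouring node, and Lemma \ref{isolated odd} gives $\k=\g$. If it is black, or sits next to a rank-two even piece, one recognises the pattern (\ref{4NODES}) with the required parities and invokes Lemma \ref{le-b-d}. Finally, in the even orthogonal case $\g=\o\s\p(2\n|2\m)$, when $\m<m$ pushes the odd node toward the bifurcated tail, the relevant configuration is (\ref{D-TAIL}) and triviality follows from Lemma \ref{sel-rul-d}. Since (\ref{OSP-I-odd})--(\ref{OSP-I-even-2}) is the complete list of type~I ortho-symplectic shapes, this covers every case, and in each of them $\k=\g$.

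The main obstacle is the bookkeeping in the even orthogonal tail: one must verify that the shift of the odd node dictated by $\m<m$ really produces a subdiagram of the exact form (\ref{D-TAIL}) with all of $\al,\gm,\si$ even and $\bt$ odd, rather than some near-miss configuration to which no selection-rule lemma applies. This is the only place where the simpler argument through (\ref{ISO-ODD}) does not suffice and the tail lemma, Lemma \ref{sel-rul-d}, is genuinely needed; matching the parity labels there against the explicit diagrams is where the real work of the proof lies.
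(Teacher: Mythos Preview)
Your overall strategy---reduce to Proposition \ref{SR=>Triv} by exhibiting a forbidden subdiagram---is exactly what the paper does. However, your case analysis misreads the geometry of the diagrams, and this leads you to invoke the wrong lemma in the even orthogonal case.

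The correct dichotomy is not by the parity/colour of the odd node, nor by whether one is near the tail, but simply by whether the shape is of type $A$ (left diagrams in (\ref{OSP-I-odd})--(\ref{OSP-I-even-2})) or type $B$ (right diagrams). Recall that for type $A$ one has $\Pi_\l=\{\al_{m+1},\ldots,\al_n\}$, so the white nodes are $\al_1,\ldots,\al_m$; for type $B$, $\Pi_\l=\{\al_1,\al_3,\ldots,\al_{m-1}\}\cup\{\al_{m+1},\ldots,\al_n\}$. The odd root sits at position $\m$. The hypothesis $\m<m$ therefore pushes the odd node \emph{to the left}, into the region $\{\al_1,\ldots,\al_m\}$, and hence \emph{away} from the tail---not toward it as you claim. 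In type $A$ this makes $\al_\m$ a grey white node with only white neighbours, i.e.\ isolated from $\Pi_\l$, so the subdiagram (\ref{ISO-ODD}) appears and Lemma \ref{isolated odd} applies. In type $B$ (with $\m$ necessarily at an even position, since an odd-position grey root in $\Pi_\l$ is already excluded by admissibility, Proposition \ref{odd-Levi-iso}), the odd node $\al_\m$ is flanked by the even black nodes $\al_{\m-1},\al_{\m+1}\in\Pi_\l$ with an even white $\al_{\m+2}$ beyond, which is precisely (\ref{4NODES}); Lemma \ref{le-b-d} applies.

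Consequently Lemma \ref{sel-rul-d} is never needed here: for every diagram in (\ref{OSP-I-even})--(\ref{OSP-I-even-2}) the tail nodes $\al_{n-1},\al_n$ lie in $\Pi_\l$, well to the right of the odd node when $\m<m$. Your final paragraph, which locates the ``main obstacle'' in matching the tail configuration (\ref{D-TAIL}), is based on a reversed picture of where the odd node ends up. Once the geometry is corrected, the paper's two-line argument---Lemma \ref{isolated odd} for type $A$, Lemma \ref{le-b-d} for type $B$---covers all cases.
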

\begin{proof}
These diagrams correspond to K-matrices of the form $A$ and $B$ from Theorem \ref{s-graded}.
  For a K-matrix of type $A$ such a diagram is ruled out by Lemma \ref{isolated odd}, while of type $B$ by Lemma \ref{le-b-d}.
\end{proof}
\subsection{Graded Satake diagrams}
\label{Sec-Gr-Satake}
To avoid conflict with the conventional notation of graded Dynkin diagrams, we do not use coloured nodes to indicate their parity.
Black nodes will designate roots from $\Pi_\l$, white the nodes from $\bar \Pi_\l$,  and the odd nodes will be  denoted with square.

Our selection rules allow for the following decorated graded Dynkin diagrams with odd nodes in $\bar \Pi_\l$:
\be
  \begin{picture}(150,10)
\put(-4,1){$\scriptstyle($}\put(2,1){$\scriptstyle)$}
\put(0,3){\circle{3}}
 \put(2,3){\line(1,0){5}}
 \put(9,0){$\cdots$}
  \put(23,3){\line(1,0){5}}
\put(32,3){\line(1,0){8}}

\put(28.5,1.5){\framebox(3,3)}
\put(33,3){\line(1,0){7}}
\put(42,3){\circle*{3}}
\put(43,3){\line(1,0){5}}
\put(51,0){$\cdots$}

\put(68,1){$\scriptstyle($}\put(83,1){$\scriptstyle)$}
\put(66,3){\line(1,0){5}}
\put(72,3){\circle{3}}
\put(74,3){\line(1,0){7}}
\put(82,3){\circle*{3}}

\put(83,3){\line(1,0){8}}
\put(91,1.5){\framebox(3,3)}

\put(94,3){\line(1,0){5}}
\put(101,0){$\cdots$}
\put(116,3){\line(1,0){5}}
\put(122,3){\circle{3}}
\put(118,1){$\scriptstyle($}\put(124,1){$\scriptstyle)$}

 \end{picture}
\quad
\begin{picture}(160,10)
\put(-2,1){$\scriptstyle($}\put(14,1){$\scriptstyle)$}

\put(2,3){\circle*{3}}
\put(3,3){\line(1,0){8}}
\put(12,3){\circle{3}}
\put(13,3){\line(1,0){5}}

\put(21,0){$\cdots$}

\put(36,3){\line(1,0){5}}

\put(42,3){\circle*{3}}

\put(43,3){\line(1,0){8}}
\put(51,1.5){\framebox(3,3)}
\put(54,3){\line(1,0){8}}
\put(60,1){$\scriptstyle($}\put(65,1){$\scriptstyle)$}
\put(64,3){\circle{3}}
\put(66,3){\line(1,0){5}}
\put(73,0){$\cdots$}
\put(88,3){\line(1,0){5}}
\put(94,1.5){\framebox(3,3)}
 \put(97,3){\line(1,0){8}}

\put(106,3){\circle*{3}}
\put(101,3){\line(1,0){5}}
\put(115,0){$\cdots$}

\put(130,3){\line(1,0){5}}
\put(132,1){$\scriptstyle($}\put(147,1){$\scriptstyle)$}
\put(136,3){\circle{3}}
\put(138,3){\line(1,0){7}}
\put(146,3){\circle*{3}}

\end{picture}
\label{ANOM-GL}
\ee
of $\g\l$-type and
\be
\label{ANOM-OSP}
\begin{picture}(130,10)
\put(-4,1){$\scriptstyle($}\put(2,1){$\scriptstyle)$}
\put(0,3){\circle{3}}
\put(2,3){\line(1,0){5}}
\put(9,0){$\cdots$}
\put(24,3){\line(1,0){5}}
\put(30,1.5){\framebox(3,3)}
\put(33,3){\line(1,0){7}}
\put(42,3){\circle*{3}}
\put(43,3){\line(1,0){7}}
\put(51,3){\circle{3}}
\put(52,3){\line(1,0){5}}
\put(60,0){$\cdots$}
%\put(66,3){\line(1,0){5}}
\put(82,3){\circle*{3}}
%\put(73,3){\line(1,0){8}}
\put(38,1){$\scriptstyle[$}\put(54,1){$\scriptstyle]$}
%\put(72,3){\circle{3}}
\put(83,3){\line(1,0){8}}
\put(92,0){$\cdots$}
\put(75,3){\line(1,0){5}}
\put(112,3){\circle*{3}}
%\put(108,1){$\scriptstyle($}\put(114,1){$\scriptstyle)$}
\put(106,3){\line(1,0){5}}

\put(112,3){\circle*{3}}
\put(112,3){\line(1,0){12}}
\put(125,3){\circle*{6}}
\end{picture}
\quad\quad
\begin{picture}(50,10)
\put(0,3){\circle{3}}
\put(-4,1){$\scriptstyle($}\put(1,1){$\scriptstyle)$}
\put(1.5,3){\line(1,0){9}}\put(13,0){$\cdots$} \put(29.5,3){\line(1,0){6}}

 \put(36,1.5){\framebox(3,3)}

\put(39.5,3.5){\line(1,1){10}}\put(39.5,3.2){\line(1,-1){10}}
\put(50.5,14){\circle{3}}\put(50.5,-8){\circle*{3}}
\end{picture}
\quad\quad
  \begin{picture}(100,10)

\put(2,3){\circle*{3}}
\put(3,3){\line(1,0){5}}
\put(11,0){$\cdots$}

\put(28,1){$\scriptstyle($}\put(43,1){$\scriptstyle)$}
\put(26,3){\line(1,0){5}}
\put(32,3){\circle{3}}
\put(34,3){\line(1,0){7}}
\put(42,3){\circle*{3}}

\put(43,3){\line(1,0){8}}
\put(51,1.5){\framebox(3,3)}
\put(54,3){\line(1,0){8}}
\put(64,3){\circle{3}}
\put(60,1){$\scriptstyle($}\put(65,1){$\scriptstyle)$}
\put(67,3){\line(1,0){5}}
\put(73,0){$\cdots$}

\put(88,3){\line(1,0){5}}

\put(96,3){\circle{6}}

 \end{picture}
\ee
 of $\o\s\p$- and $\s\p\o$-types, where the big circles on the right stand for even tail.
The subdiagrams enclosed in the brackets mean the period including zero occurrence for $(\>\>)$ and at least once  for $[\>\>]$.

Observe that shapes of (\ref{ANOM-GL}) and (\ref{ANOM-OSP}) are not admissible generalized Satake diagrams from \cite{RV}.
Here is yet another diagram that shares this property:
\be
\begin{picture}(100,10)
\put(0,3){\circle*{3}}\put(1.5,3){\line(1,0){12}}\put(15,3){\circle{3}}\put(16.5,3){\line(1,0){12}}\put(30,3){\circle*{3}}

\put(31.5,3){\line(1,0){9}}\put(43,0){$\cdots$} \put(59.5,3){\line(1,0){9}}

\put(70,3){\circle*{3}}\put(71.5,3){\line(1,0){12}}\put(85,3){\circle{3}}\put(86.5,3){\line(1,0){12}}\put(100,3){\circle*{3}}

\put(100,3.5){\line(1,1){11}}\put(100,3.5){\line(1,-1){11}}
\put(111,13){\framebox(3,3)}\put(111,-8){\framebox(3,3)}
\put(111,13){\line(0,-1){18}}\put(114,13){\line(0,-1){18}}
\label{C-type-diag0}
\end{picture}
\ee
It may be therefore attributed to the family with white tail depicted on the right in (\ref{ANOM-OSP}).

All other admissible diagrams have non-graded generalized  Satake shape; they are listed below.
We arrange them in   families of diagrams of  the same shape,
indicating  possible position of odd roots  with square nodes.
Recall that the number of such nodes and their type have been fixed for each  $\g$ by the minimal symmetric grading of the underlying vector space.
In particular, there are two symmetrically allocated odd
simple roots for  $\g\l(N|2\m)$ and $\o\s\p(2|2\m)$, and exactly one such root otherwise.
 An arc connects the root $\tau(\al)$ with $\al\in \bar \Pi_\l$ if they are distinct, in the usual way.
Black nodes depict simple roots from $\Pi_\l$, whereas white nodes label elements of $\bar \Pi_\l$.
\be
\begin{picture}(120,30)
\put(-1.5,1.5){\framebox(3,3)}\put(1.5,3){\line(1,0){12}}\put(13.5,1.5){\framebox(3,3)}
\put(16.5,3){\line(1,0){9}}\put(28,0){$\cdots$} \put(44.5,3){\line(1,0){9}}
\put(53.5,1.5){\framebox(3,3)}\put(56.5,3){\line(1,0){12}}\put(68.5,1.5){\framebox(3,3)}

\put(-1.5,27.5){\framebox(3,3)}\put(1.5,29){\line(1,0){12}}\put(13.5,27.5){\framebox(3,3)}
\put(16.5,29){\line(1,0){9}}\put(28,26){$\cdots$} \put(44.5,29){\line(1,0){9}}
\put(53.5,27.5){\framebox(3,3)}\put(56.5,29){\line(1,0){12}}\put(68.5,27.5){\framebox(3,3)}

\put(71.5,29){\line(1,-1){12}}
\put(71.5,3){\line(1,1){12}}\put(84,16){\circle{3}}

\qbezier(0,6)(-5,16)(0,26)\qbezier(15,6)(10,16)(15,26)\qbezier(55,6)(50,16)(55,26)\qbezier(70,6)(65,16)(70,26)
% \put(6.9,19.5){\vector(1,-3){2}} \put(7.0,61.3){\vector(1,3){2}}
\end{picture}
\quad
\begin{picture}(80,30)
\put(-1.5,1.5){\framebox(3,3)}\put(1.5,3){\line(1,0){12}}\put(13.5,1.5){\framebox(3,3)}
\put(16.5,3){\line(1,0){9}}\put(28,0){$\cdots$} \put(44.5,3){\line(1,0){9}}
\put(53.5,1.5){\framebox(3,3)}\put(56.5,3){\line(1,0){12}}\put(68.5,1.5){\textcolor{black}{\rule{3pt}{3pt}}}
\put(70,28){\line(0,-1){5}}\put(70,4){\line(0,1){5}}
\put(68.5,11){\vdots}
\put(-1.5,27.5){\framebox(3,3)}\put(1.5,29){\line(1,0){12}}\put(13.5,27.5){\framebox(3,3)}
\put(16.5,29){\line(1,0){9}}\put(28,26){$\cdots$} \put(44.5,29){\line(1,0){9}}
\put(53.5,27.5){\framebox(3,3)}\put(56.5,29){\line(1,0){12}}\put(68.5,27.5){\textcolor{black}{\rule{3pt}{3pt}}}

\qbezier(0,6)(-5,16)(0,26)\qbezier(15,6)(10,16)(15,26)\qbezier(55,6)(50,16)(55,26)
\end{picture}
\label{GL-I}
\ee
\be
\label{SPO-I}
\begin{picture}(130,10)
\put(0,3){\circle{3}}
\put(1.5,3){\line(1,0){9}}\put(13,0){$\cdots$} \put(29.5,3){\line(1,0){9}}
\put(40,3){\circle{3}}\put(41.5,3){\line(1,0){12}}\put(53.5,1.5){\framebox(3,3)}\put(56.5,3){\line(1,0){12}}\put(68.5,1.5){\textcolor{black}{\rule{3pt}{3pt}}}
\put(71.5,3){\line(1,0){9}}\put(83,0){$\cdots$} \put(99.5,3){\line(1,0){9}}\put(108.5,1.5){\textcolor{black}{\rule{3pt}{3pt}}}
\put(113,4){\line(1,0){11}}
\put(113,2){\line(1,0){11}}
\put(110.5,1){$\scriptstyle <$}
\put(125,3){\circle*{3}}
\end{picture}
\quad\quad
\begin{picture}(160,10)
\put(15,3){\circle*{3}}\put(16.5,3){\line(1,0){12}}\put(30,3){\circle{3}}
\put(31.5,3){\line(1,0){9}}\put(43,0){$\cdots$} \put(59.5,3){\line(1,0){9}}
\put(70,3){\circle*{3}}\put(71.5,3){\line(1,0){12}}\put(83.5,1.5){\framebox(3,3)}\put(86.5,3){\line(1,0){12}}\put(98.5,1.5){\textcolor{black}{\rule{3pt}{3pt}}}
\put(101.5,3){\line(1,0){9}}\put(113,0){$\cdots$} \put(129.5,3){\line(1,0){9}}\put(138.5,1.5){\textcolor{black}{\rule{3pt}{3pt}}}
\put(143,4){\line(1,0){11}}
\put(143,2){\line(1,0){11}}
\put(140.5,1){$\scriptstyle <$}
\put(155,3){\circle*{3}}
\end{picture}
\ee
\be
\label{OSP-I-odd}
\begin{picture}(125,10)
\put(0,3){\circle{3}}
\put(1.5,3){\line(1,0){9}}\put(13,0){$\cdots$} \put(29.5,3){\line(1,0){9}}
\put(40,3){\circle{3}}\put(41.5,3){\line(1,0){12}}\put(53.5,1.5){\framebox(3,3)}\put(56.5,3){\line(1,0){12}}\put(68.5,1.5){\textcolor{black}{\rule{3pt}{3pt}}}
\put(71.5,3){\line(1,0){9}}\put(83,0){$\cdots$} \put(99.5,3){\line(1,0){9}}\put(108.5,1.5){\textcolor{black}{\rule{3pt}{3pt}}}
\put(111,4){\line(1,0){11}}
\put(111,2){\line(1,0){11}}
\put(118.5,1){$\scriptstyle >$}
\put(123.5,1.5){\textcolor{black}{\rule{3pt}{3pt}}}
\end{picture}
\quad
\begin{picture}(155,10)
\put(15,3){\circle*{3}}\put(16.5,3){\line(1,0){12}}\put(30,3){\circle{3}}
\put(31.5,3){\line(1,0){9}}\put(43,0){$\cdots$} \put(59.5,3){\line(1,0){9}}
\put(70,3){\circle*{3}}\put(71.5,3){\line(1,0){12}}\put(83.5,1.5){\framebox(3,3)}\put(86.5,3){\line(1,0){12}}\put(98.5,1.5){\textcolor{black}{\rule{3pt}{3pt}}}
\put(101.5,3){\line(1,0){9}}\put(113,0){$\cdots$} \put(129.5,3){\line(1,0){9}}\put(138.5,1.5){\textcolor{black}{\rule{3pt}{3pt}}}
\put(141,4){\line(1,0){11}}
\put(141,2){\line(1,0){11}}
\put(148.5,1){$\scriptstyle >$}
\put(153.5,1.5){\textcolor{black}{\rule{3pt}{3pt}}}
\end{picture}
\quad
\quad
\begin{picture}(55,10)
\put(0,3){\circle{3}}
\put(1.5,3){\line(1,0){9}}\put(13,0){$\cdots$} \put(29.5,3){\line(1,0){9}}
\put(40,3){\circle{3}}
\put(41,4){\line(1,0){11}}
\put(41,2){\line(1,0){11}}
\put(48.5,1){$\scriptstyle >$}
\put(53.5,1.5){\framebox(3,3)}
\end{picture}
\ee
\be
\label{OSP-I-even}
\begin{picture}(140,10)
\put(15,3){\circle{3}}
\put(16.5,3){\line(1,0){9}}\put(28,0){$\cdots$} \put(44.5,3){\line(1,0){9}}
\put(55,3){\circle{3}}
\put(56.5,3){\line(1,0){12}}
\put(68.5,1.5){\framebox(3,3)} \put(71.5,3){\line(1,0){12}}
\put(83.5,1.5){\textcolor{black}{\rule{3pt}{3pt}}}
 \put(86.5,3){\line(1,0){9}}\put(98,0){$\cdots$} \put(114.5,3){\line(1,0){9}}
\put(123.5,1.5){\textcolor{black}{\rule{3pt}{3pt}}}

\put(125,3.5){\line(1,1){11}}\put(125,3.5){\line(1,-1){11}}
\put(137.5,15){\circle*{3}}
\put(137.5,-9){\circle*{3}}
\end{picture}
\quad\quad\quad\quad
\begin{picture}(140,10)
\put(0,3){\circle*{3}}
\put(1.5,3){\line(1,0){12}}
\put(15,3){\circle{3}}
\put(16.5,3){\line(1,0){12}}
\put(30,3){\circle*{3}}
\put(31.5,3){\line(1,0){9}}\put(43,0){$\cdots$} \put(59.5,3){\line(1,0){9}}

\put(68.5,1.5){\framebox(3,3)} \put(71.5,3){\line(1,0){12}}
\put(83.5,1.5){\textcolor{black}{\rule{3pt}{3pt}}}
 \put(86.5,3){\line(1,0){9}}\put(98,0){$\cdots$} \put(114.5,3){\line(1,0){9}}
\put(123.5,1.5){\textcolor{black}{\rule{3pt}{3pt}}}

\put(125,3.5){\line(1,1){11}}\put(125,3.5){\line(1,-1){11}}
\put(137.5,15){\circle*{3}}
\put(137.5,-9){\circle*{3}}
\end{picture}
\ee
\be
\label{OSP-I-even-2}
\begin{picture}(140,10)
\put(15,3){\circle{3}}
\put(16.5,3){\line(1,0){9}}\put(28,0){$\cdots$} \put(44.5,3){\line(1,0){9}}
\put(55,3){\circle{3}}
\put(56.5,3){\line(1,0){12}}
\put(68.5,1.5){\framebox(3,3)} \put(71.5,3){\line(1,0){12}}
\put(83.5,1.5){\textcolor{black}{\rule{3pt}{3pt}}}
 \put(86.5,3){\line(1,0){9}}\put(98,0){$\cdots$} \put(114.5,3){\line(1,0){9}}
\put(123.5,1.5){\textcolor{black}{\rule{3pt}{3pt}}}

\put(125,3.5){\line(1,1){11}}\put(125,3.5){\line(1,-1){11}}
\put(137.5,15){\circle*{3}}
\put(137.5,-9){\circle*{3}}
\put(136.5,13.5){\line(0,-1){21}}\put(138.5,13.5){\line(0,-1){21}}
\end{picture}
\quad\quad\quad\quad
\begin{picture}(140,10)
\put(0,3){\circle*{3}}
\put(1.5,3){\line(1,0){12}}
\put(15,3){\circle{3}}
\put(16.5,3){\line(1,0){12}}
\put(30,3){\circle*{3}}
\put(31.5,3){\line(1,0){9}}\put(43,0){$\cdots$} \put(59.5,3){\line(1,0){9}}

\put(68.5,1.5){\framebox(3,3)} \put(71.5,3){\line(1,0){12}}
\put(83.5,1.5){\textcolor{black}{\rule{3pt}{3pt}}}
 \put(86.5,3){\line(1,0){9}}\put(98,0){$\cdots$} \put(114.5,3){\line(1,0){9}}
\put(123.5,1.5){\textcolor{black}{\rule{3pt}{3pt}}}

\put(125,3.5){\line(1,1){11}}\put(125,3.5){\line(1,-1){11}}
\put(137.5,15){\circle*{3}}
\put(137.5,-9){\circle*{3}}
\put(136.5,13.5){\line(0,-1){21}}\put(138.5,13.5){\line(0,-1){21}}
\end{picture}
\ee
 \be
\label{OSP-I-even-2-flip}
\hspace{5pt}
 \begin{picture}(145,10)
\put(55,3){\circle{3}}
 \put(56.5,3){\line(1,0){9}}\put(68,0){$\cdots$} \put(84.5,3){\line(1,0){9}}
\put(95,3){\circle{3}}

\put(96,3.5){\line(1,1){11}}\put(96,3.5){\line(1,-1){11}}
\put(107,13){\framebox(3,3)}\put(107,-8){\framebox(3,3)}
\put(107,13){\line(0,-1){18}}\put(110,13){\line(0,-1){18}}
\qbezier(111,-7)(118,4)(111,14)
\end{picture}
 \quad
\begin{picture}(115,10)
\put(0,3){\circle*{3}}\put(1.5,3){\line(1,0){12}}\put(15,3){\circle{3}}\put(16.5,3){\line(1,0){12}}\put(30,3){\circle*{3}}

\put(31.5,3){\line(1,0){9}}\put(43,0){$\cdots$} \put(59.5,3){\line(1,0){9}}

\put(70,3){\circle*{3}}\put(71.5,3){\line(1,0){12}}\put(85,3){\circle{3}}\put(86.5,3){\line(1,0){12}}\put(100,3){\circle*{3}}

\put(100,3.5){\line(1,1){11}}\put(100,3.5){\line(1,-1){11}}
\put(111,13){\framebox(3,3)}\put(111,-8){\framebox(3,3)}
\put(111,13){\line(0,-1){18}}\put(114,13){\line(0,-1){18}}
\qbezier(115,-7)(122,4)(115,14)
\label{C-type-diag}
\end{picture}
\ee

\vspace{5pt}
\begin{definition}
Diagrams (\ref{GL-I})-(\ref{C-type-diag}) are said to be of type I.
  Diagrams (\ref{ANOM-GL}) and (\ref{ANOM-OSP}) along with (\ref{C-type-diag0}) will be referred to as of type II.
They are all called $\Z_2$-graded Satake diagrams.
\end{definition}
\noindent
We also extend the above  classification to the corresponding  spherical pairs.
Graded Satake diagrams are the only decorated Dynkin diagrams
that comply with the selection rules of the previous section.

\begin{conjecture}
  Graded Satake diagrams  are non-trivial.
\end{conjecture}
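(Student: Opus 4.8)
The plan is to treat the two families of Satake diagrams separately, in each case using the Reflection Equation in its appropriate (possibly twisted) form to produce a matrix invariant of $U_q(\k)$ that $\g$ does not possess, and thereby to conclude $\k\subsetneq\g$. For the type I diagrams (\ref{GL-I})--(\ref{C-type-diag}) and for (\ref{C-type-diag0}) this can be done at once with the material already available. By Theorem \ref{s-graded} each such diagram carries one of the non-scalar matrices $A$, $B$ or $C$ solving the untwisted Reflection Equation (\ref{REuntw}) with the R-matrix (\ref{R-osp}) (for (\ref{C-type-diag0}) this is the matrix $C$ for $\g=\o\s\p(2|4\m)$), and by Theorem \ref{thm} this matrix is realised on the natural module $V=\C^{N|2\m}$ as an intertwiner for the coideal subalgebra $U_q(\k)$ attached to the diagram. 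As recorded in the Introduction, the presence of a non-scalar K-matrix is a sufficient condition for $\k\subsetneq\g$: if $\k=\g$ then the K-matrix would be an even $U_q(\g)$-endomorphism of the irreducible module $V$, hence a scalar (the weight spaces of $V$ being one-dimensional), contradicting the explicit form of $A$, $B$, $C$ for $\la\neq 0$. The remaining bookkeeping is to match, family by family, the diagrams of (\ref{GL-I})--(\ref{C-type-diag}) and (\ref{C-type-diag0}) with the K-matrix of Theorem \ref{s-graded} of the same shape and odd-node positions; this matching is the one underlying Corollary \ref{Levi-Gr-Boxes} and is routine.

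For the essentially graded diagrams of type II, (\ref{ANOM-GL}) and (\ref{ANOM-OSP}), the plan is to run the same argument with the untwisted equation replaced by the appropriate twisted one. First I would pin down the relevant twist for each family: following the discussion in Section \ref{sec-z2-re}, the $\g\l$-type diagrams (\ref{ANOM-GL}) and the $\o\s\p$/$\s\p\o$-type diagrams in (\ref{ANOM-OSP}) with trivial Satake flip should be governed by the super-transposition twist (\ref{REtw}), while those diagrams in (\ref{ANOM-OSP}) attached to a non-trivial flip should be governed by the outer-automorphism twist (\ref{REtw1}). Next I would verify, by a direct computation of the same nature as the proof of Theorem \ref{s-graded}, that the conjectured K-matrices listed in the Introduction solve these twisted equations; the only genuinely new ingredients are the behaviour of (\ref{R-osp}) under the partial super-transpositions $t_1,t_2$ and under the matrix automorphism $\theta$, both already constrained by $R^{t_1t_2}=R_{21}$ and $R^{\theta_1\theta_2}=R$. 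Finally I would prove the twisted analogue of Theorem \ref{thm} — a coideal subalgebra $U_q(\k)$ together with a twisted intertwining property of the K-matrix on $V$ — and conclude $\k\subsetneq\g$ as before, at least for those mixture parameters at which the K-matrix is non-scalar.

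The hardest point is this last step for type II: producing the twisted universal K-matrix, or at least the twisted intertwining relation on $V$, that links the conjectured matrices to the coideal subalgebras. In the untwisted case this is encoded by $\Delta(\Kc)=\Ru_{21}\Kc_1\Ru_{12}\Kc_2$ and is available from \cite{BK,RV,AV}, but no such statement is on record for the $\Z_2$-graded twisted equations (\ref{REtw})--(\ref{REtw1}), and the paper defers it to a forthcoming study. A route that bypasses K-matrices altogether would be to prove properness directly from the structure of $\k$: using the $\l$-invariant filtration $\k^0\subset\k^1\subset\cdots$ of Section \ref{SecWOp} together with Corollary \ref{cor_+_-}, one would show that for a Satake diagram the filtration does not collapse in the manner exploited in the selection-rule Lemmas \ref{non-grad-sel-rule}--\ref{sel-rul-d}, so that $\dim\k=\dim\l+\dim\t+\dim\mathfrak{m}_+<\dim\g$; concretely one would exhibit a root vector of $\g$ outside $\k$, a negative root vector $f_\gamma$ for an odd simple root $\gamma\in\bar\Pi_\l$ being the natural candidate, that cannot be produced from the generators $x_\al=e_\al+c_\al f_{\tilde\al}+\grave c_\al u_\al$ by bracketing with $\l+\t$. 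This is precisely the converse of the selection-rule lemmas and would dispatch all remaining diagrams uniformly, but carrying it out across the families (\ref{ANOM-GL})--(\ref{C-type-diag0}) is the combinatorially heavy part and is, in my view, where the real content of the conjecture lies.
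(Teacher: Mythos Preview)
The statement is a \emph{Conjecture} in the paper and is not proved there in full; the paper offers only partial evidence and explicitly leaves the remainder open. Your proposal tracks the paper's own program almost exactly. For the type~I diagrams (\ref{GL-I})--(\ref{C-type-diag}) and for (\ref{C-type-diag0}), your argument via the non-scalar K-matrices $A$, $B$, $C$ of Theorem~\ref{s-graded} is precisely what the paper does: it records that these K-matrices force $\dim\End_\k(V)>\dim\End_\g(V)$ and hence $\k\subsetneq\g$ for suitable mixture parameters. One minor correction: the intertwining property of $K$ with $U_q(\k)$ is not the content of Theorem~\ref{thm} (which only asserts the coideal property) but is established by the explicit computation of the mixture parameters $c_\al,\grave c_\al$ in Sections~5.1--5.3.

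For the remaining type~II diagrams (\ref{ANOM-GL}) and (\ref{ANOM-OSP}) there is no proof in the paper to compare against. The paper only \emph{guesses} the K-matrices (\ref{K-black-tailed})--(\ref{GL-left}), reports that they have been checked in low dimensions, and defers both the general verification of the (twisted) Reflection Equation and the twisted intertwining relation to a forthcoming study. Your plan to carry out those verifications and to prove a twisted analogue of Theorem~\ref{thm} is therefore a program to \emph{complete} the conjecture, not a reconstruction of an existing argument. You have correctly located the genuine gap and correctly flagged it as open; neither your proposal nor the paper closes it. Your alternative structural route---exhibiting a root vector (e.g.\ $f_\gamma$ for an odd $\gamma\in\bar\Pi_\l$) outside $\k$ by showing the selection-rule collapses of Lemmas~\ref{non-grad-sel-rule}--\ref{sel-rul-d} cannot occur---is not pursued in the paper and would, if carried through, be a genuinely different and more self-contained argument than the K-matrix approach.
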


Diagrams of type I yield non-trivial $\k$ for certain values of mixture parameters because they correspond to K-matrices of type $A$ and $B$ from Theorem \ref{s-graded}.
That implies inequality $\dim \End_\k(V)>\dim \End_\g(V)$ for the basic module $V$, and therefore such $\k$ are proper in $\g$.

Further we present arguments in support that diagrams of type II are non-trivial either.
In the special case of (\ref{C-type-diag0}) the  coideal subalgebra centralizes a K-matrix of type $C$, cf. Section \ref{SecKmatC}.
We guess the following K-matrices for black tailed diagrams  in (\ref{ANOM-OSP}):
\be
K&=&\sum_{i=1}^{m}(\la+\mu)e_{ii}+\sum_{m<i<m'}\la e_{ii}+\sum_{i\leqslant \m }(y_{i}e_{i,i'}+y_{i'}e_{i',i})
\nn\\
&+&\sum_{\m<i <m\atop i-\m = 1\!\! \!\!\mod 2}z_i(e_{i,i'-1}-e_{i+1,i'})+z_{i'-1}(e_{i'-1,i}-e_{i',i+1}),\quad \mbox{where}
\label{K-black-tailed}
\ee
$$
y_i y_{i'}=-\la \mu ,\quad i\leqslant  \m, \quad z_i z_{i'-1}=-\la\mu,\quad \m<i< m, \quad \mu=\kappa_m\kappa_{m'}q^{-2\rho_{m+1}}\la.
$$
The parameter $m$ is the position of the rightmost white node in the Satake diagram while $\m$ is that of the odd root.
One can see that the shape of matrix $B$ from Theorem \ref{s-graded} may be viewed as  a degeneration of (\ref{K-black-tailed}).

The K-matrix for the white tailed diagrams in (\ref{ANOM-OSP}) is conjectured to be
\be
K&=&
\sum_{i=1}^n(\mu+\la)e_{ii}+\la e_{n+1,n+1}+\sum_{\m< i\leqslant n}(y_{i}e_{i,i'}+y_{i'}e_{i',i})
\nn\\
&+&
\sum_{i<\m \atop i= 1\!\!\!\!\mod 2}z_i(e_{i,i'-1}-e_{i+1,i'})+z_{i'-1}(e_{i'-1,i}-e_{i',i+1})
.
\label{K-white-tailed}
\ee
The term $\la e_{n+1,n+1}$ is present only for $\g=\o\s\p(2\n+1|2\m)$ (recall that $n$ stands for the rank $\m+\n$ of $\g$). The other parameters
are subject to the conditions
$$
\left\{
\begin{array}{ccl}
 y_i y_{i'}&=&-\la\mu,\quad \mbox{for}\quad \m< i\leqslant n , \\
z_i z_{i'-1}&=&-\la\mu,\quad \mbox{for}\quad i < \m,
\end{array}
\right.
\quad
\mu=
\left\{
\begin{array}{ccl}
  -\la, & \g= & \o\s\p(2\n|2\m), \\
  -q^{-1}\la, & \g =&\o\s\p(2\n+1|2\m),
\end{array}
\right.
$$
The parameters $\la$ and $\mu$ are eigenvalues of $K$. The expression (\ref{K-white-tailed}) is a generalization of the matrix $C$, which
is yet another justification for relating the diagram (\ref{C-type-diag0}) to  type II.

The K-matrix for the middle diagram in (\ref{ANOM-OSP}) is believed to be
\be
K&=&
\sum_{i=1}^n(\mu+\la)e_{ii}+\sum_{i=1}^{ n-2}(y_{i}e_{i,i'}+y_{i'}e_{i',i})
\nn\\
&+&
z_n(e_{n,n'-1}-e_{n+1,n'})+z_{n'-1}(e_{n'-1,n}-e_{n',n+1})
.
\label{K-half-tailed}
\ee
with arbitrary eigenvalues $\la$, $\mu$, and the relations $y_i y_{i'}=-\la \mu$, $z_n z_{n'-1}=-\la \mu$.
Conjugation with $\sum_{i\not =n,n'}e_{ii} + e_{n,n'}+e_{n',n}$ produces another K-matrix that corresponds to the tail flipped Satake diagram.

The above matrices cover all diagrams in  (\ref{ANOM-OSP}) apart from white tailed  with $\tau\not =\id$ for $\g=\o\s\p(2\m|2\n)$.
 The representation $\pi$ intertwines the automorphism of $U_q(\g)$  that  flips the tail root vectors with conjugation by the matrix
$\sum_{i\not =n,n'}e_{ii} + e_{n,n'}+e_{n',n}$.
The K-matrix for such diagrams  is conjectured to satisfy (\ref{REtw1}) and equal
\be
K&=&
\sum_{i=1}^{n-1}(\mu+\la)e_{ii}+\sum_{\m< i\leqslant n}(y_{i}e_{i,i'}+y_{i'}e_{i',i})
\nn\\
&+&
\sum_{i<\m \atop i= 1\!\!\!\!\mod 2}z_i(e_{i,i'-1}-e_{i+1,i'})+z_{i'-1}(e_{i'-1,i}-e_{i',i+1}),
\label{K-white-tailed-twisted}
\ee
where $\mu=-q^{-2}\la$, $y_n=y_{n'}=\la$,
$y_{i'}y_i=-\la\mu$, for $i=\m+1,\ldots n-1$, and  $z_i z_{i'-1}=-\la\mu,\quad \mbox{for}\quad i < \m$.

It is interesting to note that eigenvalues of the  K-matrix (\ref{K-white-tailed}) depend on two independent parameters for $\g=\s\p\o(2\n|2\m)$ and only  one parameter for $\g=\o\s\p(2\n|2\m)$ although $\o\s\p(2\n|2\m)\simeq \s\p\o(2\m|2\n)$. This may manifest that their quantum supergroups, which differ by the
choice of Borel subalgebra in $\g$, are not isomorphic as Hopf superalgebras.

The diagrams in  (\ref{ANOM-GL}) are related to the twisted RE in the form of (\ref{REtw}).
The basic representation intertwines the matrix supertransposition $A\mapsto A^t$, $A^t_{ij}=A_{ji}(-1)^{(|i|+|j|)|i|}$ with
a superalgebra  anti-automorphphism $\si\colon U_q(\g)\to U_q(\g)$ acting by
$$
e_\al\mapsto a_\al f_\al q^{h_\al}, \quad f_\al\mapsto  (-1)^{|\al|} a_\al^{-1} q^{-h_\al} e_\al, \quad q^{\pm h_\al}\mapsto q^{\pm h_\al},
$$
with an appropriate choice of $a_\al\in \C^\times$.
The K-matrix for the right diagrams  in (\ref{ANOM-GL}) corresponds to $\g=\g\l(N|2\m)$  and is believed to be
\be
K&=&
\sum_{i\leqslant \m \atop i= 1\!\!\!\!\mod 2}z_i(e_{i,i+1}-q e_{i+1,i})+z_{i'-1}(e_{i'-1,i'}-q e_{i',i'-1})
+\sum_{\m < i <\m'}y_ie_{ii},
\label{GL-right}
\ee
assuming $\m\in 2\N$.

The left   diagrams   in (\ref{ANOM-GL}) for $\g=\g\l(N|2\m)$ imply that  $N\in 2\N$. The K-matrix for it is conjectured to be
\be
K&=&
\sum_{i\leqslant \m}y_ie_{ii}+y_{i'}e_{i',i'}
+
\sum_{\m < i <\m'\atop i-\m= 1\!\!\!\!\mod 2}z_i(e_{i,i+1}-q^{-1} e_{i+1,i}).
\label{GL-left}
\ee
The parameters $y_i$ and $z_i$ in (\ref{GL-right}) and (\ref{GL-left}) are arbitrary.

The hypothetical K-matrices presented in this section cover all graded Satake diagrams of type II. They have been checked in low dimensions, and we expect that
they satisfy the appropriate RE in general.

  \subsection{Spherical pairs and   Reflection Equation}
Consider matrices $A, B, C$ from Theorem \ref{s-graded}, and the  invertible matrix $A$  from (\ref{A-gl}). Denote  their classical limit ($q \rightarrow 1$) by $A_0$,  $B_0$,
and $C_0$ ($=C$ as it  is independent of $q$). The subalgebra $\k$ is centralizing this classical limit.
As in the non-graded case, we describe it with the map $\theta=-w_\l\circ \tau$, which takes $\al\in \bar \Pi_\l$ to $ -\tilde\al\in \Rm^-_\g$.

The action of $\theta$ on the basis of $\h^*$ is  as follows (recall that $n$ stands for the rank of $\g$).
In the case of matrix $A_0$ it is
\begin{equation*}
\theta(\zeta_i)=\begin{cases}
\zeta_{i'},& \mathrm{if}\ i\leqslant m, \\
\zeta_{i}, &\mathrm{if}\ m<i,\\
\end{cases},
\quad \zeta_i= \delta_i\ \mathrm{or}\ \ve_i,
\end{equation*}
for $\g\l(N|2\m)$ and
\begin{equation*}\theta(\delta_i)=\begin{cases}
-\delta_i,&\mathrm{if}\ i\leqslant m, \\
\delta_{i}, &\mathrm{if}\ i>m,\\
\end{cases},
\quad\theta(\ve_i)=\ve_{i}.
\end{equation*}
for ortho-symplectic $\g$.
In the case of matrix $B_0$ for  ortho-symplectic $\g$   it is
\begin{equation*}
\theta(\delta_i)=\begin{cases}
-\delta_{i+1},& \mathrm{if}\ i<m\ \mathrm{is\ odd}, \\
-\delta_{i-1},& \mathrm{if}\ i\leqslant m\ \mathrm{is\ even}, \\
\delta_{i}, & \mathrm{if}\ i>m,\\
\end{cases}
\quad\theta(\ve_i)=\ve_{i}.
\end{equation*}
In the case of matrix $C_0$ for  ortho-symplectic $\g=\o\s\p(2|4\m)$  it is

\begin{equation*}
\theta(\delta_i)=\begin{cases}
-\delta_{i+1},& \mathrm{if}\ i<n\ \mathrm{is\ odd}, \\
-\delta_{i-1},& \mathrm{if}\ i\leqslant n-1\ \mathrm{is\ even}, \\
\end{cases}
\quad\theta(\ve_1)=-\ve_{1}.
\end{equation*}

%\begin{equation*}
%\theta(\delta_i)=\begin{cases}
%-\delta_{i+1},& \mathrm{if}\ i<n\ \mathrm{is\ odd}, \\
%-\delta_{i-1},& \mathrm{if}\ i< n-1\ \mathrm{is\ even}, \\
%\delta_{i-1},& \mathrm{if}\ i= n-1, \\
%\end{cases}
%\quad\theta(\ve_1)=\ve_{1}.
%\end{equation*}

The root basis of the subalgebra $\l$ is explicitly
$$
\Pi_\l=\begin{cases}
 \{\al_i\}_{i=m+1}^{n-m}, & \mbox{for } \g\l(N|2\m), \\
  \{\al_i\}_{i=m+1}^{n}, & \mbox{for }  \o\s\p(N|2\m),\ \mbox{and}\ \s\p\o(N|2\m) \ \mbox{related\ to}\ A_0, \\
\{\al_{2i+1} \}_{i=0}^{\frac{m}{2}-1}\cup  \{\al_i\}_{i=m+1}^{n}, & \mbox{for }  \o\s\p(N|2\m),\ \mbox{and}\ \s\p\o(N|2\m) \ \mbox{related\ to}\  B_0. \\
\{\al_{2i+1} \}_{i=0}^{\frac{n-3}{2}}, & \mbox{for }  \o\s\p(2|4\m), \ \mbox{related\ to}\  C_0. \\
\end{cases}
$$
For simple roots  $\al \in\bar \Pi_\l$ in the case of $A_0$  the roots $\tilde \al$ are given by
$$
\tilde\al_i=\al_{i'-1}, \quad i=1,\ldots, m-1,\quad i=m=\frac{N+2\m}{2},\quad  \tilde \al_i=\al_{i'-1},\quad i=m'+1,\ldots, n,
$$
$$
 \tilde \al_m=\sum_{l=m+1}^{n-m+1}\al_l, \quad \tilde \al_{n-m+1}=\sum_{l=m}^{n-m}\al_l,
$$
for $\g=\g\l(N|2\m)$,
$$ \tilde \al_i=\al_i, \quad i=1,\ldots, m-1,\quad  \tilde \al_m= \al_m+2\sum_{l=m+1}^{n}\al_l,$$
for $\g=\o\s\p(2\n+1|2\m)$,
$$ \tilde \al_i=\al_i, \quad i=1,\ldots, m-1,\quad  \tilde \al_m= \al_m+2\sum_{l=m+1}^{n-2}\al_l+\al_{n-1}+\al_{n},\quad m<n-1,$$
$$  \tilde \al_{n-1}= \al_n,\quad \tilde \al_{n}= \al_{n-1},\quad m=n-1,$$
for $\g=\o\s\p(2\n|2\m)$, and
$$ \tilde \al_i=\al_i, \quad i=1,\ldots, m-1,\quad  \tilde \al_m= \al_m+2\sum_{l=m+1}^{n-1}\al_l+\al_{n},\quad m<n-1, $$
$$\tilde \al_m=\al_{n-1}+\al_{n},\quad m=n-1,$$
for $\g=\s\p\o(2\n|2\m)$.

For simple roots  $\al \in\bar \Pi_\l$, in the case of $B_0$, the roots $\tilde \al$ are given by
$$ \tilde \al_{2i}=\sum_{l=2i-1}^{2i+1}\al_{l}, \quad i=1,\ldots, \frac{m}{2}-1,\quad  \tilde \al_m= \al_{m-1}+\al_{m}+2\sum_{l=m+1}^{n}\al_l,
$$
for $\g=\o\s\p(2\n+1|2\m)$,
$$ \tilde \al_{2i}=\sum_{l=2i-1}^{2i+1}\al_{l}, \quad i=1,\ldots, \frac{m}{2}-1,\quad  \tilde \al_m=\al_{m-1}+ \al_m+2\sum_{l=m+1}^{n-2}\al_l+\al_{n-1}+\al_{n},
$$
$$\tilde \al_{n-1}= \al_{n-2}+\al_n,\quad \tilde \al_{n}= \al_{n-2}+\al_{n-1},\quad m=n-1,$$
for $\g=\o\s\p(2\n|2\m)$,
$$ \tilde \al_{2i}=\sum_{l=2i-1}^{2i+1}\al_{l}, \quad i=1,\ldots, \frac{m}{2}-1,\quad  \tilde \al_m= \al_{m-1}+\al_{m}+2\sum_{l=m+1}^{n-1}\al_l+\al_{n},
$$
for $\g=\s\p\o(2\n|2\m)$.

For simple roots  $\al \in\bar \Pi_\l$, in the case of $C_0$, the roots $\tilde \al$ are given by
$$ \tilde \al_{2i}=\sum_{l=2i-1}^{2i+1}\al_{l}, \quad i=1,\ldots, \frac{n-3}{2},\
$$
$$\tilde \al_{n-1}= \al_{n-2}+\al_{n-1},\quad \tilde \al_{n}= \al_{n-2}+\al_{n},$$
for $\g=\o\s\p(2|4\m)$.

The Lie superalgebra $\k$ is generated by $\l$ and additional elements  $x_\al = e_\al+c_\al f_{\tilde \al}+ \grave c_{\al} u_\al$,   $h_{\tilde \al}-h_\al$, where $\al \in \bar\Pi_\l$,  and $c_\al\in \C^\times$,  $\grave c_{\al}\in \C$. The scalar $\grave c_\al\neq 0$ only if $\g=\g\l(N|2\m)$ and $\al=\al_{\frac{N+2\m}{2}}$; then $u_\al=h_\al$.
The mixture  parameters $c_\al$ are determined by the K-matrices.

\section{Coideal subalgebras and K-matrices}
\label{SecCoidSubA}
Recall that a total order on the set of positive roots  $ \Rm^+ \supset\Pi$ of $\g$ is called  normal if every   sum $\al +\beta\in \Rm^+$  with $\al, \beta \in \Rm^+$ is between $\al$ and $\beta$.

Choose a normal order $\bt^1,\bt^2,\ldots$ on $\Rm^+$ and extend $F_\bt=q^{h_\bt}f_\bt$ with simple $\bt$  to  all $\bt \in \Rm^+$ as it is done for $ e_{-\bt}$ in \cite{KT}.
Denote by $\Uc_m^-$ the subalgebra in $\Uc^-$ generated by $F_{\bt^i}$ with $i\leqslant m$.
\begin{lemma}
\label{coideal_norm_order}
For each   $m=1,\ldots,  |\Rm^+|$, $\Uc_m^-$ is a left coideal subalgebra in $\U_q(\g)$.
Furthermore,
$$
\Delta(F_{\bt^m})=  (F_{\bt^m}\tp 1+q^{h_{\bt^m}}\tp F_{\bt^m}) +  U_q(\g)\tp \Uc^-_{m-1}.
$$
\end{lemma}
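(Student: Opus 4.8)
The statement packs two assertions together --- that each $\Uc^-_m$ is a left coideal subalgebra, and the explicit coproduct of $F_{\bt^m}$ --- and the substance lies entirely in the second. Granting the coproduct formula, the coideal property is immediate: $\Uc^-_m$ is a subalgebra of $\Uc^-\subseteq U_q(\g)$ by definition, it is generated as an algebra by $F_{\bt^1},\dots,F_{\bt^m}$, and $\Delta$ is a superalgebra homomorphism, so it is enough to check that $\Delta$ sends each of these generators into $U_q(\g)\tp\Uc^-_m$, which is itself a subalgebra of the graded tensor square. Applying the asserted formula to $\bt^i$ gives $\Delta(F_{\bt^i})-\bigl(F_{\bt^i}\tp 1+q^{h_{\bt^i}}\tp F_{\bt^i}\bigr)\in U_q(\g)\tp\Uc^-_{i-1}$, and since $F_{\bt^i}\in\Uc^-_m$ and $\Uc^-_{i-1}\subseteq\Uc^-_m$, all three pieces lie in $U_q(\g)\tp\Uc^-_m$. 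So everything reduces to establishing the coproduct formula for every $\bt\in\Rm^+$.

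I would prove the coproduct formula by induction on the height $\mathrm{ht}(\bt)$. If $\bt=\al_i$ is simple, then $F_\bt=q^{h_i}f_i$ and $\Delta(F_\bt)=(q^{h_i}\tp q^{h_i})\Delta(f_i)=F_\bt\tp 1+q^{h_i}\tp F_\bt$ directly from $\Delta(f_i)=f_i\tp q^{-h_i}+1\tp f_i$, with vanishing remainder (note how the $q^{h_\bt}$-dressing is exactly what makes the second leg land in $\Uc^-$ rather than carrying the factor $q^{-h_i}$). If $\mathrm{ht}(\bt)\ge 2$, write $\bt=\bt^m$ and let $(\bt^{i_0},\bt^{j_0})$ be its \emph{minimal pair} in the normal order; by normality $i_0<m<j_0$, both roots have strictly smaller height, and in the Khoroshkin--Tolstoy normalisation used to define the root vectors $F_{\bt^m}=[F_{\bt^{i_0}},F_{\bt^{j_0}}]_{q'}$ for the appropriate graded $q'$-commutator. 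Since $\Delta$ is a superalgebra homomorphism, $\Delta(F_{\bt^m})=[\Delta(F_{\bt^{i_0}}),\Delta(F_{\bt^{j_0}})]_{q'}$; I would substitute the inductive expressions for $\Delta(F_{\bt^{i_0}})$ and $\Delta(F_{\bt^{j_0}})$, expand the product in $U_q(\g)\tp U_q(\g)$ according to the graded multiplication rule, and sort the resulting terms by their second tensor leg. Because the inductive hypothesis already puts all second legs of $\Delta(F_{\bt^{i_0}})$ and $\Delta(F_{\bt^{j_0}})$ inside $\Uc^-$, every term that appears has its second leg in $\Uc^-$ automatically.

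The terms with second leg $1$ recombine, by the defining $q'$-commutator identity for $F_{\bt^m}$, into $F_{\bt^m}\tp 1$; the terms with second leg $F_{\bt^{i_0}}F_{\bt^{j_0}}$ or $F_{\bt^{j_0}}F_{\bt^{i_0}}$ carry first leg $q^{h_{\bt^{i_0}}}q^{h_{\bt^{j_0}}}=q^{h_{\bt^m}}$ and recombine, via the same identity, into $q^{h_{\bt^m}}\tp F_{\bt^m}$. The ``dangerous'' cross terms whose second leg is a single $F_{\bt^{j_0}}$ must cancel --- if they did not, the formula would fail, since $j_0>m$ puts $F_{\bt^{j_0}}$ outside $\Uc^-_{m-1}$ --- and they do cancel, precisely because of the way $q'$ and the weight-dependent powers of $q$ are chosen in the Khoroshkin--Tolstoy construction. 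The crux, and the main obstacle, is to show that every surviving remainder term has its second leg a product of root vectors $F_\gamma$ with $\gamma$ strictly preceding $\bt^m$ in the fixed normal order, hence inside $\Uc^-_{m-1}$. This is where convexity of the normal order is essential: through the Levendorskii--Soibelman straightening relations a product $F_{\bt^a}F_{\bt^b}$ with $a<b$ equals a scalar multiple of $F_{\bt^b}F_{\bt^a}$ modulo ordered monomials in the root vectors strictly between $\bt^a$ and $\bt^b$, and iterating this rewriting --- in combination with the cancellations just noted and the weight bound $0\le\nu<\bt^m$ on the second leg --- is what confines the remainder to $U_q(\g)\tp\Uc^-_{m-1}$. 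This bookkeeping is the technical heart of the argument; it is essentially the coproduct formula for the Cartan--Weyl generators proved in \cite{KT}, which one may also quote directly.
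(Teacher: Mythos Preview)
Your proposal is correct and, at bottom, lands on the same point as the paper: both reduce the lemma to the coproduct formula for Cartan--Weyl generators in \cite{KT} (Proposition~8.3 there). The paper does not reprove this but simply cites it, adding only the observation that the assignment $F_\al\mapsto e_{-\al}$ on simple root vectors extends to an algebra automorphism of $U_q(\b_-)$ which is a coalgebra \emph{anti}-isomorphism --- this is the dictionary that converts the \cite{KT} statement (written for the generators $e_{-\al}$ and the opposite coproduct convention) into the present one for the $q^{h_\bt}$-dressed generators $F_\bt$ and left coideals. Your direct computation for simple roots makes this conversion explicit, and your inductive sketch is a faithful outline of the \cite{KT} argument; the cancellation of the ``dangerous'' cross term and the confinement of the remainder to $\Uc^-_{m-1}$ via convexity/straightening are exactly the points whose verification is deferred to \cite{KT}, so nothing is lost by citing it outright as the paper does.
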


\begin{proof}
This readily follows from  \cite{KT}, Prop. 8.3. upon the assignment $F_\al\mapsto e_{-\al}$ on simple root vectors, extended as an algebra automorphism
to $U_q(\b_-)$ (and a coalgebra anti-isomorphism).
\end{proof}
We  apply this fact to quantize $U(\k)$.
Pick  $\al\in \bar \Pi_\l$, put  $\al'=\tau(\al)\in \bar \Pi_\l$ and $\tilde \al=w_\l(\al')\in \Rm^+$. Consider the root system generated by $\al'$ and $\Pi_\l$;
denote by $\Pi_{\k_{\tilde \al}}$  its connected component containing $\al'$, and by $\k_{\tilde \al}$ the subalgebra generated by the corresponding
simple root vectors. Denote also $\Pi_{\l_{\tilde \al}}=\Pi_{\k_{\tilde \al}} \cap \Pi_\l$ and by $\l_{\tilde \al}$ the corresponding subalgebra in $\l$.
\begin{propn}
  For each $\al\in \Pi_\al$ there is a normal order on $\Rm^+_{\k_{\tilde \al}}$ such that
\begin{itemize}
  \item $\al'$ is in the rightmost position and all $\Rm^+_{\l_{\tilde \al}}$ are on the left.
  \item the root $\tilde \al$ is next to the right after $\Rm^+_{\l_{\tilde \al}}$.
\end{itemize}
\end{propn}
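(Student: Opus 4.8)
The plan is to build the required ordering explicitly in three consecutive blocks and then observe that the asserted properties hold by construction. The first step is to record the structure of $\k_{\tilde\al}$ for $\al\in\bar\Pi_\l$: its Dynkin diagram $\Pi_{\k_{\tilde\al}}$ is connected and equals $\{\al'\}\sqcup\Pi_{\l_{\tilde\al}}$, so $\k_{\tilde\al}$ is itself a basic matrix Lie superalgebra and $\l_{\tilde\al}$ is the Levi part of the maximal parabolic $\k_{\tilde\al}=\l_{\tilde\al}+\mathfrak n$ obtained by deleting the node $\al'$. Hence $\Rm^+_{\k_{\tilde\al}}=\Rm^+_{\l_{\tilde\al}}\sqcup\Rm^+_{\mathfrak n}$, where $\Rm^+_{\mathfrak n}$ is exactly the set of roots with positive $\al'$-coefficient, and the target is an order $[\,\Rm^+_{\l_{\tilde\al}}\,],\ \tilde\al,\ [\,\Rm^+_{\mathfrak n}\setminus\{\tilde\al,\al'\}\,],\ \al'$ (the case $\Pi_{\l_{\tilde\al}}=\varnothing$ being trivial, since then $\tilde\al=\al'$ and $\Rm^+_{\k_{\tilde\al}}=\{\al'\}$).

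The key structural input is that $\al'$ is a simple root, so $\al'-\bt\notin\Rm$ for all $\bt\in\Pi_{\l_{\tilde\al}}$, i.e.\ $e_{\al'}$ is a lowest-weight vector for the $\l_{\tilde\al}$-action. Thus $V^+_{\al'}$ coincides with the degree-one component $\mathfrak n_1$ of the nilradical (irreducible for a maximal parabolic), it has lowest weight $\al'$, and by Lemma \ref{high-low} its highest weight is $\tilde\al=w_\l(\al')$. From this I would extract two facts: (a) $\tilde\al+\gm$ is never a root for $\gm\in\Rm^+_{\l_{\tilde\al}}$ — it would be a weight of $\mathfrak n_1$ strictly above its highest weight — so $\Rm^+_{\l_{\tilde\al}}\cup\{\tilde\al\}$ is biconvex in $\Rm^+_{\k_{\tilde\al}}$; and (b) $\al'$ is the unique simple root of $\k_{\tilde\al}$ lying in $\Rm^+_{\mathfrak n}$ and is the minimal element of $\Rm^+_{\mathfrak n}$ for the standard partial order, hence a legitimate candidate for the last position of a normal order.

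When $\k_{\tilde\al}$ is purely even I would use the classical dictionary between normal orders on $\Rm^+_{\k_{\tilde\al}}$ and reduced words for the longest element $w_0$ of the Weyl group of $\k_{\tilde\al}$. Begin the reduced word with a reduced word for $w_0^{\l_{\tilde\al}}$ (this is possible since $w_0^{\l_{\tilde\al}}\le_R w_0$, and it realises $\Rm^+_{\l_{\tilde\al}}$ as the initial segment); the next letter must be $s_{\al'}$, because $w_0^{\l_{\tilde\al}}(\al')=\tilde\al$ (the highest weight of the lowest-weight module $V^+_{\al'}$), which puts $\tilde\al$ in position $m{+}1$. It then remains to complete $v:=w_0^{\l_{\tilde\al}}s_{\al'}$ to a reduced word for $w_0$ ending in $s_{i_N}$ with $\al_{i_N}=-w_0(\al')$; this is possible because the residual element $u'=v^{-1}w_0$ satisfies $u'(\al_{i_N})=-s_{\al'}(\tilde\al)<0$, the point being that $\langle\tilde\al,\al'^{\vee}\rangle\le 1$ (the value $2$ would force $\tilde\al=\al'$ by orthogonality of $w_\l$), so $s_{\al'}(\tilde\al)$ is a positive root. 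The last root read off is then $-w_0(\al_{i_N})=\al'$. In the general (super) case I would run the same argument inside the Khoroshkin–Tolstoy formalism of normal orderings for basic Lie superalgebras — biconvex subsets still occur as initial segments, one still has the freedom to prescribe the last element among the simple roots, and Lemma \ref{high-low} supplies the same ``highest weight $=\tilde\al$'' input — or, as a safe fallback, verify the claim by direct inspection of the finite list of diagrams $\Pi_{\k_{\tilde\al}}$ occurring in the graded Satake diagrams of Section \ref{Sec-Gr-Satake}.

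Once such a reduced word (resp.\ normal ordering) is in hand the three required properties hold by construction and convexity is automatic, so the real content — and the main obstacle — is the simultaneous realisability asserted in the previous step: unlike the non-graded maximal-parabolic picture, one cannot order $\Rm^+_{\mathfrak n}$ simply by $\al'$-degree (degree-two roots must be sandwiched between degree-one ones), so everything hinges on the length inequality $u'(\al_{i_N})<0$, i.e.\ on $s_{\al'}(\tilde\al)$ being a positive root, and on transporting this — via Lemma \ref{high-low} rather than via a longest Weyl element — into the Khoroshkin–Tolstoy setting, where in addition one must check that the isotropic odd simple roots of $\k_{\tilde\al}$ do not obstruct the normal-ordering moves.
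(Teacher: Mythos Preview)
Your approach is considerably more elaborate than the paper's. The paper's own proof reads, in full: ``In the non-graded case this is derived from the properties of the longest element of the Weyl group. In our case this is checked by a direct examination.'' That is, the authors simply verify the claim case by case against the finite list of Satake diagrams in Section~\ref{Sec-Gr-Satake}, without attempting a uniform argument.

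Your even-case argument via reduced words for $w_0$ is correct and genuinely more conceptual: the observation that $w_0^{\l_{\tilde\al}}(\al')=\tilde\al$ forces $\tilde\al$ into position $|\Rm^+_{\l_{\tilde\al}}|+1$, together with the length computation showing that $s_{\al'}(\tilde\al)$ remains positive (hence $u's_{i_N}<u'$), gives a clean uniform justification that the paper does not supply. This is what the paper alludes to but does not spell out.

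For the super case, however, your proposal is not self-contained. The phrase ``run the same argument inside the Khoroshkin--Tolstoy formalism'' hides real work: there is no Weyl group action, the operator $w_\l$ is a substitute that need not arise from reflections, and one cannot simply speak of ``reduced words'' or biconvexity in the usual sense without redeveloping the combinatorics. You acknowledge this by offering direct inspection as a ``safe fallback'' --- but that fallback \emph{is} the paper's proof. So in the graded setting your proposal ultimately reduces to the same case check the authors perform, dressed in more structural language. The added value of your write-up is the explicit reduced-word mechanism in the even case and the identification of $s_{\al'}(\tilde\al)>0$ as the crucial inequality; the cost is that the super-case portion promises more uniformity than it delivers.
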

\begin{proof}
  In the non-graded case this is derived  from the properties of the longest element of the Weyl group.
  In our case this is checked by a direct examination.
\end{proof}
\noindent
We can conclude now, by   Lemma \ref{coideal_norm_order},
that for each $\al\in \bar \Pi_\l$ the elements $F_{\tilde \al}$ and $F_\mu$ with  $\mu\in \Pi_{\l_{\tilde\al}}$
form a left   coideal subalgebra in $U_q(\g)$.

%{\color{blue} In non-graded case $\grave c_\al \neq 0$ if m=n and $\g=\o\s(2n),\ \s\p(2n)$, these two cases do not appear here (graded case) because $m$ cannot be greater than $\m$.}

\begin{thm} \label{thm}
  The subalgebra $U_q(\k)\subset U_q(\g)$ generated by $e_\al,f_\al, q^{\pm h_\al}$ with  $\al \in \Pi_\l$, and by $X_\al=q^{h_{\tilde \al}-h_\al}e_\al +c_\al F_{\tilde\al} +\grave c_{\al}(q^{u_\al}-1)$,
  $q^{\pm(h_{\tilde \al}-h_\al)}$ with $\al \in \bar\Pi_\l$, $u_\al\in \c$,  is a left coideal   in $U_q(\g)$.
\end{thm}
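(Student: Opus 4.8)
The plan is to establish the defining property of a left coideal, $\Delta(U_q(\k))\subset U_q(\g)\tp U_q(\k)$, by checking it on generators: since $\Delta$ is an algebra homomorphism and $U_q(\g)\tp U_q(\k)$ is a subalgebra of $U_q(\g)\tp U_q(\g)$, it suffices to show $\Delta(x)\in U_q(\g)\tp U_q(\k)$ for each listed generator $x$. The grouplike generators $q^{\pm h_\al}$ ($\al\in\Pi_\l$) and $q^{\pm(h_{\tilde\al}-h_\al)}$ ($\al\in\bar\Pi_\l$) are disposed of at once, and for $e_\al,f_\al$ with $\al\in\Pi_\l$ one reads off from $\Delta(e_\al)=q^{h_\al}\tp e_\al+e_\al\tp 1$ and $\Delta(f_\al)=f_\al\tp q^{-h_\al}+1\tp f_\al$ that both lie in $U_q(\g)\tp U_q(\k)$, because $e_\al,f_\al,q^{\pm h_\al}\in U_q(\k)$ by construction; no signs intervene since every factor moved across is even.

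The substance is the computation of $\Delta(X_\al)$ for $\al\in\bar\Pi_\l$. I would first treat the generic case $\grave c_\al=0$, so $X_\al=q^{h_{\tilde\al}-h_\al}e_\al+c_\al F_{\tilde\al}$. Multiplicativity together with $\Delta(e_\al)=q^{h_\al}\tp e_\al+e_\al\tp 1$ gives
\[
\Delta(q^{h_{\tilde\al}-h_\al}e_\al)=q^{h_{\tilde\al}}\tp q^{h_{\tilde\al}-h_\al}e_\al+q^{h_{\tilde\al}-h_\al}e_\al\tp q^{h_{\tilde\al}-h_\al},
\]
while for $F_{\tilde\al}$ I would use the normal order on $\Rm^+_{\k_{\tilde\al}}$ furnished by the Proposition immediately preceding the theorem --- the one in which $\tilde\al$ sits directly after the block $\Rm^+_{\l_{\tilde\al}}$ --- extended to a normal order on $\Rm^+$, and apply Lemma \ref{coideal_norm_order} to obtain
\[
\Delta(F_{\tilde\al})=F_{\tilde\al}\tp 1+q^{h_{\tilde\al}}\tp F_{\tilde\al}+r,\qquad r\in U_q(\g)\tp\Uc^-_{\l_{\tilde\al}},
\]
where $\Uc^-_{\l_{\tilde\al}}$ is the subalgebra generated by the $F_\mu$ with $\mu\in\Pi_{\l_{\tilde\al}}\subset\Pi_\l$ (precisely the positive roots preceding $\tilde\al$ in that order). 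Adding the two contributions,
\[
\Delta(X_\al)=q^{h_{\tilde\al}}\tp X_\al+q^{h_{\tilde\al}-h_\al}e_\al\tp q^{h_{\tilde\al}-h_\al}+c_\al F_{\tilde\al}\tp 1+c_\al r,
\]
and each summand lies in $U_q(\g)\tp U_q(\k)$: the first since $X_\al\in U_q(\k)$; the second since $q^{h_{\tilde\al}-h_\al}\in U_q(\k)$ is itself a generator; the third trivially; the fourth since $F_\mu=q^{h_\mu}f_\mu\in U_q(\k)$ for $\mu\in\Pi_\l$, so $\Uc^-_{\l_{\tilde\al}}\subset U_q(\k)$.

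For the exceptional case $\grave c_\al\neq 0$ --- which by the explicit description of the generators occurs only for $\g=\g\l(N|2\m)$ at the central root $\al=\al_{(N+2\m)/2}$, where $\tilde\al=\al$ is simple, $h_{\tilde\al}=h_\al$, $u_\al=h_\al$ and the remainder $r$ is absent --- I would expand $\Delta(X_\al)$ with $X_\al=e_\al+c_\al F_\al+\grave c_\al(q^{h_\al}-1)$ directly; collecting the terms carrying $1$ in the right leg against those carrying $q^{h_\al}$ in the left leg, the spurious $\grave c_\al q^{h_\al}\tp 1$ contributions cancel and one is left with the clean telescoping identity $\Delta(X_\al)=X_\al\tp 1+q^{h_\al}\tp X_\al$, patently in $U_q(\g)\tp U_q(\k)$. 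This exhausts the generators. I expect the only genuinely delicate point to be the control of the lower terms $r$: one needs them in $U_q(\g)\tp\Uc^-_{\l_{\tilde\al}}$ rather than merely in $U_q(\g)\tp\Uc^-$, and this is exactly what the special normal order of the preceding Proposition buys --- the roots strictly preceding $\tilde\al$ are precisely those of $\l_{\tilde\al}$ --- together with the fact, built into the Khoroshkin--Tolstoy construction, that the composite root vectors attached to $\Rm^+_{\l_{\tilde\al}}$ already lie in the subalgebra generated by the simple $F_\mu$, $\mu\in\Pi_{\l_{\tilde\al}}$. Everything else is routine Hopf-superalgebraic bookkeeping.
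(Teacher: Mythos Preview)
Your proposal is correct and follows essentially the same approach as the paper: compute $\Delta$ on each summand of $X_\al$, invoke Lemma \ref{coideal_norm_order} together with the preceding Proposition on normal ordering to ensure the lower terms of $\Delta(F_{\tilde\al})$ land in $U_q(\g)\tp U_q(\l_{\tilde\al})\subset U_q(\g)\tp U_q(\k)$, and handle the $\grave c_\al$-term separately. You are somewhat more explicit than the paper (checking the grouplike and $\Pi_\l$ generators, and spelling out the telescoping $\Delta(X_\al)=X_\al\tp 1+q^{h_\al}\tp X_\al$ in the exceptional $\g\l$ case), but the argument is the same.
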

\begin{proof}
 First set $\grave c_{\al}=0$.
   For each   $\al \in \bar\Pi_\l $, we find, using Lemma \ref{coideal_norm_order}:
$$
\Delta(F_{\tilde \al})\in  (q^{h_{\tilde\al}}\tp F_{\tilde\al}+F_{\tilde \al}\tp 1) +  U_q(\g)\tp U_q(\l_{\tilde \al}),
$$
$$
\Delta (q^{h_{\tilde \al}-h_\al}e_\al)= q^{h_{\tilde \al}}\tp q^{h_{\tilde \al}-h_\al}e_\al+q^{h_{\tilde \al}-h_\al}e_\al \tp q^{h_{\tilde \al}-h_\al}.
$$
Adding the lines together we arrive at
$$
\Delta(X_{\tilde \al})\in q^{h_{\tilde\al}} \tp X_{\al}+ F_{\tilde\al}\tp 1  + q^{h_{\tilde \al}-h_\al}e_\al\tp  q^{h_\al -h_{\tilde \al}}  +   U_q(\g)\tp  U_q(\l_{\tilde \al}),
$$
as required.
It is straightforward to see that the coproduct of the  term with $\grave c_{\al}\neq0$ is in $U_q(\g)\otimes U_q(\k)$ too. This completes the proof.
\end{proof}

In the next sections we relate the RE matrices presented in Theorem \ref{s-graded} with  coideal subalgebras.
We require that such a matrix   commutes with $\pi\bigl(U_q(\l)\bigr)$ and all $\pi(q^{h_{\tilde \al}-h_\al})$, where $\al \in \bar \Pi_\l$.
Th
e values of $c_\al$  and $\grave c_{\al}$ are determined from this requirement.
The constant $\grave c_\al$ is distinct from zero only if  $\g=\g\l(N|2\m)$ with $\al=\tilde\al_m=\al_m,\ m=\frac{N+2\m}{2}$, and $\mu\neq -\la$.
Expressions for $F_{\tilde \al}$ with non-simple $\tilde \al$  will be provided explicitly in terms of q-commutator defined
 as  $$[F_{\al}, F_{\beta}]_{q^{\pm i}}=F_{\al}F_{\beta}-(-1)^{(|F_{\al}||F_{\beta}|)}q^{\mp\lceil \frac{i}{2}\rceil (\alpha,\beta)}F_{\beta} F_{\al},\quad \forall\alpha,\beta\in \Rm^+.$$
Here $i=0,1,2$ and $i\mapsto \lceil \frac{i}{2}\rceil$ sends it to $0,1,1$.

\subsection{K-matrices of type $A$}
It is found that $\tilde \al_i=\al_i$ for all $i<m$, and $
c_{\al_{i}}=
-\frac{y_{i+1}}{q y_{i}}
$, for  ortho-symplectic $\g$. The four types of $\g$ will be further treated separately.

$\bullet\quad \g=\g\l(N|2\m)$\\
The Satake  diagrams fall into two classes:
\begin{center}
\begin{picture}(160,80)
\put(9,8.5){\framebox(3,3)}
\put(49,8.5){\framebox(3,3)}
\put(100,8.5){\framebox(3,3)}
\put(140,8.5){\textcolor{black}{\rule{3pt}{3pt}}}
\put(12,10){\line(1,0){35}}
\put(52,10){\line(1,0){10}}
\put(88,10){\line(1,0){10}}
\put(67,10){$\ldots$}
\put(103,10){\line(1,0){36}}

\put(9,68.5){\framebox(3,3)}
\put(49,68.5){\framebox(3,3)}
\put(100,68.5){\framebox(3,3)}
\put(140,68.5){\textcolor{black}{\rule{3pt}{3pt}}}
\put(12,70){\line(1,0){35}}
\put(52,70){\line(1,0){10}}
\put(88,70){\line(1,0){10}}
\put(67,70){$\ldots$}
\put(103,70){\line(1,0){36}}

\put(141,12){\line(0,1){18}}
\put(140,30){\textcolor{black}{\rule{3pt}{3pt}}}
\put(140,37){$\vdots$}
\put(140,50){\textcolor{black}{\rule{3pt}{3pt}}}
\put(141,52){\line(0,1){17}}

 \qbezier(8,16)(0,40)(8 ,64) \put(6.9,19.5){\vector(1,-3){2}} \put(7.0,61.3){\vector(1,3){2}}
 \qbezier(48,16)(40,40)(48 ,64) \put(46.9,19.5){\vector(1,-3){2}} \put(47.0,61.3){\vector(1,3){2}}
\qbezier(98,16)(90,40)(98 ,64) \put(96.9,19.5){\vector(1,-3){2}} \put(97.0,61.3){\vector(1,3){2}}

\put(12,74){$\al_1$}
\put(102,74){$\al_m$}

\put(12,3){$\al_n$}
\put(102,3){$\al_{ N+2\m-m}$}

 \end{picture}
\quad \quad
\begin{picture}(160,85)
\put(65,80){$m=\frac{N+2\m}{2}$}
%\put(70,-5){$m=\frac{N}{2}$}

\put(9,8.5){\framebox(3,3)}
\put(49,8.5){\framebox(3,3)}
\put(100,8.5){\framebox(3,3)}
\put(140,8.5){\framebox(3,3)}
\put(12,10){\line(1,0){35}}
\put(52,10){\line(1,0){10}}
\put(88,10){\line(1,0){10}}
\put(67,10){$\ldots$}
\put(103,10){\line(1,0){36}}

\put(9,68.5){\framebox(3,3)}
\put(49,68.5){\framebox(3,3)}
\put(100,68.5){\framebox(3,3)}
\put(140,68.5){\framebox(3,3)}
\put(12,70){\line(1,0){35}}
\put(52,70){\line(1,0){10}}
\put(88,70){\line(1,0){10}}
\put(67,70){$\ldots$}
\put(103,70){\line(1,0){36}}

\put(141.5,11.5){\line(1,1){27}}
 \put(170,40){\circle{3}}
\put(141.5,68.5){\line(1,-1){27}}

 \qbezier(8,16)(0,40)(8 ,64) \put(6.9,19.5){\vector(1,-3){2}} \put(7.0,61.3){\vector(1,3){2}}
 \qbezier(48,16)(40,40)(48 ,64) \put(46.9,19.5){\vector(1,-3){2}} \put(47.0,61.3){\vector(1,3){2}}
\qbezier(98,16)(90,40)(98 ,64) \put(96.9,19.5){\vector(1,-3){2}} \put(97.0,61.3){\vector(1,3){2}}
\qbezier(138,16)(130,40)(138 ,64) \put(136.9,19.5){\vector(1,-3){2}} \put(137.0,61.3){\vector(1,3){2}}

\put(12,73){$\al_1$}
\put(172,42){$\al_m$}

\put(12,3){$\al_n$}
 \end{picture}

\end{center}
with the mixture parameters
$$
c_{\al_{i}}=(-1)^{\delta_i^{\m'}}
\frac{y_{i+1}}{y_{i}},\quad \mathrm{if}\quad i< m \quad \mathrm{or} \quad i> N+2\m-m.
$$
The roots $\tilde\alpha_m$ and $\tilde\alpha_{N+2\m-m}$ are not simple when $m<\frac{N+2\m}{2}$.  The corresponding root vectors can be defined as
$$F_{\tilde \al_m}=[\dots[F_{m+1}, F_{m+2}]_{ q} ,\dots  F_{n-m+1}]_{q}, \quad F_{\tilde \al_{n-m+1}}=[\dots[F_{m}, F_{m+1}]_{\bar q} ,\dots  F_{n-m}]_{\bar q}.
$$
The mixture parameters are expressed by
$$
c_{\al_{m}}=  \frac{(-1)^{N} \mu}{y_m}, \quad
c_{\al_{N+2\m-m}}=\begin{cases}
              \frac{(-1)^{N+1+\delta_m^{\m}} q^{2(N+2\m) - 4m - 3} \lambda }{y_m}, & \mbox{if }\ \m\leq m, \\
              \frac{(-1)^{N+1} q^{2(N-2\m) + 4m + 3}\lambda }{y_m} , & \mbox{if }\ m<\m.
            \end{cases}
$$
The case $m=\frac{N+2\m}{2}$ with $\m<m$ is described by the diagram on the right. The simple root $\al_m=\al$ corresponds to the following  mixed  vector

$$X_\al=e_{\al} +c_{\al} F_{\al} + \grave c_{\al} (q^{h_{\al}}-1),$$
where
$$
    c_{\al}=\frac{-\lambda\mu q}{y_m^2}, \quad
    \grave c_{\al}=\frac{(\mu+\lambda) q}{(q^{2} - 1)y_m}.
$$
Remark that $\l=\{0\}$, and $\c=\h$ in this case. The element $u_\al\in \c$ is taken equal to $h_\al$.

$\bullet\quad\g=\o\s\p(2\n+1|2\m)$\\
There are two Satake diagrams leading the K-matries of type $A$:
\be
\begin{picture}(200,20)

\put(2,10){\line(1,0){10}}
\put(38,10){\line(1,0){10}}
\put(0,10){\circle{3}}
\put(17,10){$\ldots$}
\put(50,10){\circle{3}}
\put(51.5,10){\line(1,0){27}}
\put(79,8.5){\framebox(3,3)}
\put(82,10){\line(1,0){27}}

\put(112,10){\line(1,0){10}}
\put(148,10){\line(1,0){10}}
\put(109,8.5){\textcolor{black}{\rule{3pt}{3pt}}}
\put(127,10){$\ldots$}
\put(158,8.5){\textcolor{black}{\rule{3pt}{3pt}}}
\put(161,8.5){\line(1,0){24.5}}
\put(161,11.5){\line(1,0){24.5}}
\put(182,7){$>$}
\put(189,9){\textcolor{black}{\rule{3pt}{3pt}}}

\put(82,14){$\al_m$}

\put(0,14){$\al_1$}

\put(187,14){$\al_{n}$}

 \end{picture}
 \quad\quad
\begin{picture}(120,20)
\put(0,10){\circle{3}}
\put(1.5,10){\line(1,0){27}}
\put(32,10){\line(1,0){10}}
\put(68,10){\line(1,0){10}}
\put(30,10){\circle{3}}
\put(80,10){\circle{3}}
\put(47,10){$\ldots$}

\put(109,9){\framebox(3,3)}

\put(81,8.5){\line(1,0){24.5}}
\put(81,11.5){\line(1,0){24.5}}
\put(101.5,7){$>$}

\put(107,14){$\al_m$}

\put(0,14){$\al_1$}

 \end{picture}
 \nn
\ee
Note that the diagram on the right is admissible. It does not fall under conditions of Lemma \ref{isolated odd} because the odd root $\al_m$ is not grey.
 The composite root vectors for the left diagram are
$$F_{\tilde \al_m}=[\dots[[[\dots[[F_{m}, F_{m+1}]_{q}, F_{m+2}]_{q},\dots  F_{n}]_{q},F_{n}],F_{n-1}]_{q},\dots F_{m+1}]_{q},\ m< n-1,$$
$$F_{\tilde \al_m}=[[F_{n-1}, F_{n}]_{q},F_{n}],\quad  m=n-1,$$
with mixture coefficient
$$
c_{\al_{m}}=\frac{(-1)^{(n- m-\delta_m^\m)} \bar q^{2\delta_m^{\m}}\lambda}{y_m }.
$$
In the scenario with $\m=m=n$ relative to the second diagram, the mixture constant is
$$c_{\al_m}=-\frac{\lambda}{y_m q}.$$

$\bullet\quad\g=\o\s\p(2\n|2\m)$\\
The case $m\leqslant n-2$ includes two diagrams
 \begin{center}
\begin{picture}(200,40)

\put(2,20){\line(1,0){10}}
\put(38,20){\line(1,0){10}}
\put(0,20){\circle{3}}
\put(17,20){$\ldots$}
\put(50,20){\circle{3}}
\put(51.5,20){\line(1,0){27}}
\put(79,18.5){\framebox(3,3)}
\put(82,20){\line(1,0){27}}

\put(112,20){\line(1,0){10}}
\put(148,20){\line(1,0){10}}
\put(109,18.5){\textcolor{black}{\rule{3pt}{3pt}}}
\put(127,20){$\ldots$}
\put(158,18.5){\textcolor{black}{\rule{3pt}{3pt}}}
\put(161.5,21.5){\line(3,2){25}}
\put(161.5,18.5){\line(3,-2){25}}
\put(187.5,39){\circle*{3}}
\put(187.5,1){\circle*{3}}

\put(0,24){$\al_1$}
\put(80,24){$\al_m$}

\put(192,0){$\al_{n-1}$}
\put(192,38){$\al_{n}$}
\put(250,20){$\n\neq 1$}
 \end{picture}
 \end{center}

\vspace{0.5cm}

\begin{center}
\begin{picture}(200,40)

\put(2,20){\line(1,0){10}}
\put(38,20){\line(1,0){10}}
\put(0,20){\circle{3}}
\put(17,20){$\ldots$}
\put(50,20){\circle{3}}
\put(51.5,20){\line(1,0){27}}
\put(79,18.5){\framebox(3,3)}
\put(82,20){\line(1,0){27}}

\put(112,20){\line(1,0){10}}
\put(148,20){\line(1,0){10}}
\put(109,18.5){\textcolor{black}{\rule{3pt}{3pt}}}
\put(127,20){$\ldots$}
\put(158,18.5){\textcolor{black}{\rule{3pt}{3pt}}}
\put(162,21.5){\line(3,2){25}}
\put(162,18.5){\line(3,-2){25}}
\put(187.5,38){\textcolor{black}{\rule{3pt}{3pt}}}
\put(187.5,1){\textcolor{black}{\rule{3pt}{3pt}}}

\put(0,24){$\al_1$}
\put(80,24){$\al_m$}

\put(192,0){$\al_{n-1}$}
\put(192,38){$\al_{n}$}

\put(190,3){\line(0,1){35}}
\put(188,3){\line(0,1){35}}
\put(250,20){$\n= 1$}
 \end{picture}
\end{center}
 The root vectors are
$$
F_{\tilde \al_m}=[\dots[[\dots[[F_{m}, F_{m+1}]_{q}, F_{m+2}]_{q},\dots F_{n}]_{q},F_{n-2}]_{q},\dots F_{m+1}]_{q},
\quad \mbox{for}\quad m<n-2,
$$
$$F_{\tilde \al_m}=[[F_{n-2}, F_{n-1}]_{q}, F_{n}]_{q}\quad \mbox{for}\quad  m=n-2.
$$
In both cases, the parameters are
$$
c_{\al_{m}}=\frac{(-1)^{(n- m+1+\delta_m^{\m})}\bar q^{2\delta_m^\m}\lambda}{y_m }.
$$
The case $m=n-1$ corresponds to the diagram
\begin{center}
 \begin{picture}(300,50)

\put(0,30){\circle{3}}
\put(1.5,30){\line(1,0){27}}

\put(32,30){\line(1,0){10}}
\put(68,30){\line(1,0){10}}
\put(30,30){\circle{3}}
\put(47,30){$\ldots$}
\put(80,30){\circle{3}}
\put(81.5,31.5){\line(3,2){25}}
\put(81.5,28.5){\line(3,-2){25}}
\put(107.5,48){\framebox(3,3)}
\put(107.5,10){\framebox(3,3)}
\qbezier(114,16)(119,30)(114 ,44) \put(115,19){\vector(-1,-3){2}} \put(115,41){\vector(-1,3){2}}

\put(0,34){$\al_1$}
\put(112,8){$\al_m=\al_{n-1}$}
\put(112,48){$\al_n$}

\put(110,13){\line(0,1){35}}
\put(108,13){\line(0,1){35}}

\put(170,27){$\mbox{with} \quad
c_{\al_{n}}=c_{\al_{n-1}}=-\frac{\lambda}{y_{m}q^2}$.}
 \end{picture}
\end{center}

$\bullet\quad\g=\s\p\o(2\n|2\m)$\\
The graded Satake diagram is
 \begin{center}
\begin{picture}(230,30)

\put(0,10){\circle{3}}
\put(1.5,10){\line(1,0){27}}
\put(30,10){\circle{3}}
\put(31.5,10){\line(1,0){27}}
\put(60,10){\circle{3}}

\put(62,10){\line(1,0){10}}
\put(77,10){$\ldots$}
\put(98,10){\line(1,0){10}}

\put(108,8.5){\framebox(3,3)}
\put(111,10){\line(1,0){28}}
\put(139,8.5){\textcolor{black}{\rule{3pt}{3pt}}}

\put(142,10){\line(1,0){10}}
\put(157,10){$\ldots$}
\put(178,10){\line(1,0){10}}

\put(188,8.5){\textcolor{black}{\rule{3pt}{3pt}}}
\put(194,8.5){\line(1,0){24.5}}
\put(194,11.5){\line(1,0){24.5}}
\put(190,7){$<$}
\put(219,10){\circle*{3}}

\put(0,14){$\al_1$}
\put(107,14){$\al_m$}
\put(217,14){$\al_{n}$}

 \end{picture}
\end{center}
with coefficients
$$ c_{\al_{m}}=(-1)^{n -m+\delta_{m}^\m}\frac{\bar q^{2\delta_{m}^\m}\lambda}{ y_m},$$
and root vectors
$$F_{\tilde \al_m}=[\dots[F_{m}, F_{m+1}]_{ q}, F_{m+2}]_{ q},\dots F_{n}]_{ q^{2}},F_{n-1}]_{ q},\dots F_{m+1}]_{ q},\quad m<n-1,$$
$$F_{\tilde \al_m}=[F_{n-1},F_{n}]_{q^{2}},\quad m=n-1.$$
This completes the description of the diagrams for this type of K-matrix.

\subsection{K-matrices of type $B$}

For all cases, we find
$$F_{\tilde \al_{2i}}=[[F_{2i}, F_{2i+1}]_{q}, F_{2i-1}]_{q}\ \mathrm{with}\ i = 1,\ldots, \frac{m}{2}-1, \quad \mbox{and }$$
$$
c_{\al_{2i}}=
-\frac{z_{2i+1}}{q z_{2i-1}},\quad \mathrm{if}\  2i<m.
$$

$\bullet\quad\g=\o\s\p(2\n+1|2\m)$\\
The Satake diagram is
 \begin{center}
\begin{picture}(230,30)

\put(0,10){\circle*{3}}
\put(1.5,10){\line(1,0){27}}
\put(30,10){\circle{3}}
\put(31.5,10){\line(1,0){27}}
\put(60,10){\circle*{3}}

\put(62,10){\line(1,0){10}}
\put(77,10){$\ldots$}
\put(98,10){\line(1,0){10}}

\put(108,8.5){\framebox(3,3)}
\put(111.5,10){\line(1,0){27}}
\put(140,8.5){\textcolor{black}{\rule{3pt}{3pt}}}

\put(142,10){\line(1,0){10}}
\put(157,10){$\ldots$}
\put(178,10){\line(1,0){10}}

\put(188,8.5){\textcolor{black}{\rule{3pt}{3pt}}}
\put(191,8.5){\line(1,0){23.5}}
\put(191,11.5){\line(1,0){23.5}}
\put(211,7){$>$}
\put(218,8.5){\textcolor{black}{\rule{3pt}{3pt}}}

\put(0,14){$\al_1$}
\put(107,14){$\al_m$}
\put(217,14){$\al_{n}$}

 \end{picture}
\end{center}
For even $m=2i$, the non-simple root vectors are
\begin{equation*}
   F_{\tilde \al_m}=[\dots[[[\dots[[F_{m}, F_{m+1}]_{q}, F_{m+2}]_{q},\dots  F_{n}]_{q},F_{n}],F_{n-1}]_{q},\dots F_{m+1}]_{q},F_{m-1}]_{q},\quad m<n,
\end{equation*}
with
$$
  c_{\al_{m}}=(-1)^{n+\delta_{m}^\m}\frac{q^{-2\delta_{m}^\m-1}\lambda}{z_{m-1}},
$$
and
 $$F_{\tilde \al_m}=[F_{n-1},F_{n}]_{ q^{-1}},\quad m=n, $$
with
$$
  c_{\al_{m}}=\frac{\lambda }{z_{n-1}q^{3}}.
$$

$\bullet\quad\g=\o\s\p(2\n|2\m)$\\
For $m<n-1$, and $\n\neq 1$, we have the diagram
  \begin{center}
\begin{picture}(220,40)

 \put(20,20){\circle*{3}}
\put(21.5,20){\line(1,0){27}}
\put(50,20){\circle{3}}
\put(51.5,20){\line(1,0){27}}

\put(82,20){\line(1,0){10}}
\put(118,20){\line(1,0){10}}
\put(80,20){\circle*{3}}
\put(97,20){$\ldots$}

\put(50,20){\circle{3}}
\put(51.5,20){\line(1,0){27}}

\put(128,18.5){\framebox(3,3)}
\put(131.5,20){\line(1,0){27}}

\put(158.5,18.5){\textcolor{black}{\rule{3pt}{3pt}}}

\put(211,18.5){\textcolor{black}{\rule{3pt}{3pt}}}
\put(214.5,21.5){\line(3,2){25}}
\put(214.5,18.5){\line(3,-2){25}}
\put(240.5,39){\circle*{3}}
\put(240.5,1){\circle*{3}}

\put(15,24){$\al_1$}
\put(125,24){$\al_m$}
\put(245,0){$\al_{n-1}$}
\put(245,38){$\al_{n}$}

\put(161.5,20){\line(1,0){10}}
\put(201,20){\line(1,0){10}}
\put(178,20){$\ldots$}

 \end{picture}
\end{center}
 When $m<n-1$, and $\n=1$, there is another diagram
 \begin{center}
\begin{picture}(220,40)

 \put(20,20){\circle*{3}}
\put(21.5,20){\line(1,0){27}}
\put(50,20){\circle{3}}
\put(51.5,20){\line(1,0){27}}

\put(82,20){\line(1,0){10}}
\put(118,20){\line(1,0){10}}
\put(80,20){\circle*{3}}
\put(97,20){$\ldots$}

\put(50,20){\circle{3}}
\put(51.5,20){\line(1,0){27}}

\put(128,18.5){\framebox(3,3)}
\put(131.5,20){\line(1,0){27}}

\put(158.5,18.5){\textcolor{black}{\rule{3pt}{3pt}}}

\put(211,18.5){\textcolor{black}{\rule{3pt}{3pt}}}
\put(214.5,21.5){\line(3,2){25}}
\put(214.5,18.5){\line(3,-2){25}}
\put(239.5,38){\textcolor{black}{\rule{3pt}{3pt}}}
\put(239.5,1){\textcolor{black}{\rule{3pt}{3pt}}}

\put(15,24){$\al_1$}
\put(125,24){$\al_m$}
\put(245,0){$\al_{n-1}$}
\put(245,38){$\al_{n}$}

\put(161.5,20){\line(1,0){10}}
\put(201,20){\line(1,0){10}}
\put(178,20){$\ldots$}

\put(242,3){\line(0,1){35}}
\put(240,3){\line(0,1){35}}
 \end{picture}
\end{center}
The mixture parameter is
\begin{equation*}
c_{\al_{m}}=
(-1)^{n+1+\delta_{m}^\m}\frac{q^{-2\delta_{m}^\m-1}\lambda}{z_{m-1}},\quad m< n-1
\end{equation*}
and root vector
   $$F_{\tilde \al_m}=[[\dots[[\dots[[F_{m}, F_{m+1}]_{q}, F_{m+2}]_{q},\dots  F_{n}]_{q},F_{n-2}]_{q},\dots F_{m+1}]_{q},F_{m-1}]_{q}.$$
In the case of $m=n-1$, it corresponds to the diagram
\begin{center}
\begin{picture}(220,50)

 \put(20,20){\circle*{3}}
\put(21.5,20){\line(1,0){27}}
\put(50,20){\circle{3}}
\put(51.5,20){\line(1,0){27}}

\put(82,20){\line(1,0){10}}
\put(118,20){\line(1,0){10}}
\put(80,20){\circle*{3}}
\put(97,20){$\ldots$}

\put(50,20){\circle{3}}
\put(51.5,20){\line(1,0){27}}

\put(130,20){\circle*{3}}
\put(131.5,20){\line(1,0){27}}

\put(160,20){\circle{3}}
\put(161.5,20){\line(1,0){27}}

\put(190,20){\circle*{3}}
\put(191.5,21.5){\line(3,2){25}}
\put(191.5,18.5){\line(3,-2){25}}
\put(217,38){\framebox(3,3)}
\put(217,0){\framebox(3,3)}

\put(15,24){$\al_1$}

\put(222,-2){$\al_{n-1}$}
\put(222,38){$\al_n$}

\put(220,3){\line(0,1){34.5}}
\put(217,3){\line(0,1){34.5}}

\qbezier(224,6)(229,20)(224 ,34) \put(225,9){\vector(-1,-3){2}} \put(225,31){\vector(-1,3){2}}
 \end{picture}
\end{center}
Composite root vectors are  defined as
$$F_{\tilde \al_{n-1}}=[F_{n},F_{n-2}]_{q},\quad
F_{\tilde \al_n}=[F_{n-1},F_{n-2}]_{ q},\quad
\mbox{and} \quad
c_{\al_{n-1}}=c_{\al_{n}}=-
\frac{\lambda}{q^{3}z_{n-1}}.
$$

$\bullet\quad\g=\s\p\o(2\n|2\m)$\\
The  diagram is
 \begin{center}
\begin{picture}(230,30)

\put(0,10){\circle*{3}}
\put(1.5,10){\line(1,0){27}}
\put(30,10){\circle{3}}
\put(31.5,10){\line(1,0){27}}
\put(60,10){\circle*{3}}

\put(62,10){\line(1,0){10}}
\put(77,10){$\ldots$}
\put(98,10){\line(1,0){10}}

\put(108,8.5){\framebox(3,3)}
\put(111.5,10){\line(1,0){28}}
\put(139,8.5){\textcolor{black}{\rule{3pt}{3pt}}}

\put(142,10){\line(1,0){10}}
\put(157,10){$\ldots$}
\put(178,10){\line(1,0){10}}

\put(188,8.5){\textcolor{black}{\rule{3pt}{3pt}}}
\put(194,8.5){\line(1,0){24.5}}
\put(194,11.5){\line(1,0){24.5}}
\put(190,7){$<$}
\put(219,10){\circle*{3}}

\put(0,14){$\al_1$}
\put(107,14){$\al_m$}
\put(217,14){$\al_{n}$}

 \end{picture}
\end{center}
Depending on the value of $m$ we define the root vectors as
$$F_{\tilde \al_m}=[[\dots[[\dots[[F_{m}, F_{m+1}]_{q}, F_{m+2}]_{q},\dots F_{n}]_{q^{2}},F_{n-1}]_{q},\dots F_{m+1}]_{q},F_{m-1}]_{q},\quad \mathrm{if}\ m<n-1,$$
$$F_{\tilde \al_{m}}=[[F_{n-1}, F_{n}]_{q^2}, F_{n-2}]_{q},\quad \mathrm{if}\ m=n-1\in 2\Z,$$
with the mixture parameter
\begin{equation*}
c_{\al_{m}}=
(-1)^{n+\delta_{m}^\m}\frac{q^{-2\delta_{m}^\m-1}\lambda}{z_{m-1}},\quad m\leq\m.
\end{equation*}
This completes our description of type I coideal subalgebras and their K-matrices.
 \subsection{K-matrix of type $C$}
\label{SecKmatC}
The matrix $C$ is  related with  the algebra $\g=\o\s\p(2|4\m)$ and its  Satake diagram
\begin{center}
\begin{picture}(220,50)

 \put(20,20){\circle*{3}}
\put(21.5,20){\line(1,0){27}}
\put(50,20){\circle{3}}
\put(51.5,20){\line(1,0){27}}

\put(82,20){\line(1,0){10}}
\put(118,20){\line(1,0){10}}
\put(80,20){\circle*{3}}
\put(97,20){$\ldots$}

\put(50,20){\circle{3}}
\put(51.5,20){\line(1,0){27}}

\put(130,20){\circle*{3}}
\put(131.5,20){\line(1,0){27}}

\put(160,20){\circle{3}}
\put(161.5,20){\line(1,0){27}}

\put(190,20){\circle*{3}}
\put(191.5,21.5){\line(3,2){25}}
\put(191.5,18.5){\line(3,-2){25}}
\put(217,38){\framebox(3,3)}
\put(217,0){\framebox(3,3)}

\put(15,24){$\al_1$}

\put(222,-2){$\al_{n-1}$}
\put(222,38){$\al_n$}

\put(220,3){\line(0,1){34.5}}
\put(217,3){\line(0,1){34.5}}

 \end{picture}
\end{center}
The composite root vectors of roots $\tilde \al$, where $\al \in \bar \Pi_\l$, are
$$F_{\tilde \al_{2i}}=[[F_{2i}, F_{2i+1}]_{q}, F_{2i-1}]_{q}\ \mathrm{with}\ c_{\al_{2i}}=
-\frac{x_{2i+1}}{q x_{2i-1}},\ \mathrm{where}\ i = 1,\ldots, \frac{n-3}{2},$$
for even $\al$.
Also, there are two odd root vectors
 $$F_{\tilde \al_n}=[F_{n},F_{n-2}]_{ q},\quad F_{\tilde \al_{n-1}}=[F_{n-1},F_{n-2}]_{ q}, $$
with mixture parameters
$$
  c_{\al_{n}}=-\frac{x_{n+2} }{x_{n}q}, \quad  c_{\al_{n-1}}=-\frac{x_n }{x_{n-2}q}.
$$

\vspace{20pt}

\noindent
\underline{\large \bf Acknowledgement}

\vspace{10pt}
\noindent
This work is done at the Center of Pure Mathematics MIPT.
It is financially supported  by Russian Science Foundation  grant 23-21-00282.

The first author (D. Algethami) is   thankful to the Deanship of Graduate Studies and Scientific Research at University of Bisha for the financial support through the Scholars Research Support Program of the University.

A. Mudrov and V. Stukopin are grateful to V. Zhgoon for valuable  discussions of spherical manifolds.

The authors are profoundly indebted to the anonymous reviewer for careful reading of the manuscript and for extremely valuable comments
that served to  improvement of its first version.

\vspace{15pt}

\underline{\large \bf Data Availability.}

\vspace{10pt}

 Data sharing not applicable to this article as no datasets were generated or analysed during the current study.

\subsection*{Declarations}

\underline{\large \bf Competing interests.}

\vspace{10pt}

The authors have no competing interests to declare that are relevant to the content of this article.

\end{document}